\DeclareMathAlphabet{\pazocal}{OMS}{zplm}{m}{n}
\numberwithin{equation}{section}
\newtheorem{theorem}{Theorem}[section]
\newtheorem{corollary}[theorem]{Corollary}
\newtheorem{lemma}[theorem]{Lemma}
\newtheorem{proposition}[theorem]{Proposition}
\newtheorem{definition}[theorem]{Definition}
\newtheorem{remark}[theorem]{Remark}
\newtheorem{Cond}[theorem]{Condition}
\newcommand{\R}{\mathbb{R}}
\newcommand{\N}{\mathbb{N}}
\newcommand{\Z}{\mathbb{Z}}
\newcommand{\T}{\mathbb{T}}
\newcommand{\E}{\mathbb{E}}
\newcommand{\PB}{\mathbb{P}}
\newcommand{\Y}{\mathbb{Y}}
\newcommand{\OneB}{\mathbbm{1}}
\newcommand{\DD}{\mathcal{D}}
\newcommand{\FF}{\mathcal{F}}
\newcommand{\LL}{\mathcal{L}}
\newcommand{\MM}{\mathcal{M}}
\newcommand{\VV}{V}
\newcommand{\XX}{\mathcal{X}}
\newcommand{\ZZ}{\mathcal{Z}}
\newcommand{\ds}{\displaystyle}
\newcommand{\RNum}[1]{\uppercase\expandafter{\romannumeral #1\relax}}
\newcommand{\e}{\varepsilon}
\renewcommand{\L}{\Lambda}
\renewcommand{\P}{\mathbb{P}}
\renewcommand{\L}{\Lambda}
\renewcommand{\H}{\mathcal{H}}
\newcommand{\Pa}{\mathcal{P}}
\newcommand{\Qb}{\pazocal{Q}}
\newcommand{\Pam}{\mathcal{R}}
\newcommand{\Pb}{\pazocal{P}}
\newcommand{\id}{\mathbbm{1}}
\newcommand{\mf}[1]{{\mathfrak #1}}
\begin{document}

\title{Singular diffusion limit of a tagged particle in zero range processes with Sinai-type random environment}

\begin{abstract}

We derive a singular diffusion limit for the position of a tagged particle in zero range interacting particle processes on a one dimensional torus with a Sinai-type random environment via two steps.  In the first step, a regularization is introduced by averaging the random environment over an $\varepsilon N$-neighborhood. With respect to such an environment, the microscopic drift of the tagged particle is in form $\frac{1}{N}W_\varepsilon'$, where $W_\varepsilon'$ is a regularized White noise.  Scaling diffusively, we find the nonequilibrium limit of the tagged particle $x^\varepsilon_t$ is the unique weak solution of
$d x_t^{\varepsilon} = 2\frac{\Phi(\rho^{\varepsilon}(t, x_t^{\varepsilon}))}{\rho^{\varepsilon}(t, x_t^\varepsilon)} \,W_{\varepsilon}'(x_t^\varepsilon) + \sqrt{\frac{\Phi(\rho^{\varepsilon}(t, x_t^\varepsilon))}{\rho^{\varepsilon}(t, x_t^\varepsilon)}} \,dB_t$,
in terms of the hydrodynamic mass density $\rho^\varepsilon$ recently identified
and homogenized interaction rate $\Phi$.  

In the second step, we show that $x^\varepsilon$, as $\varepsilon$ vanishes, converges in law to the diffusion $x^0$ described informally by
$d x_t^0 = 2\frac{\Phi(\rho^{0}(t, x_t^{0}))}{\rho^{0}(t, x_t^0)} \,W'(x_t^0) + \sqrt{\frac{\Phi(\rho^{0}(t, x_t^0))}{\rho^{0}(t, x_t^0)}} \,dB_t$,
where $W'$ is a spatial White noise and $\rho^0$ is the para-controlled limit of $\rho^\varepsilon$ also recently identified,
solving the singular PDE
$
\partial_t \rho^0 = \frac{1}{2}\Delta \Phi(\rho^0) - 2\nabla \big(W' \Phi(\rho^0)\big)$.

\end{abstract}

\subjclass[2020]{60K35, 60L40, 82C22, 82C44}

\keywords{}

\author{Marcel Hudiani}
\address{Mathematics\\
  University of Arizona\\
  621 N. Santa Rita Ave.\\
Tucson, AZ 85750, USA}
\email{{\tt marcelh@arizona.edu}}

\author{Claudio Landim}
\address{IMPA, Estrada Dona Castorina 110, CEP 22460, Rio de Janeiro, Brasil and CNRS
UPRES A 6085, Universit\'e de Rouen, 76801 Mont Saint Aignan Cedex, France.}
\email{{\tt landim@impa.br}}

\author{Sunder Sethuraman}
\address{Mathematics\\
  University of Arizona\\
  621 N. Santa Rita Ave.\\
Tucson, AZ 85750, USA}
\email{{\tt sethuram@arizona.edu}}

\maketitle

%\tableofcontents

\section{Introduction}
\label{sec:micro_dyn}

The problem of characterizing the motion of a tagged or distinguished particle interacting with others has a long history in statistical physics Sections I.8, II.6 in \cite{spohn1991large}, and as well as in the mathematics literature  
Section VIII.4 in \cite{liggett1985interacting},  
and Sections 4.3, 8.4 in \cite{KL}.
Part of the difficulty is that the position $X_t$ of the tagged particle depends on the configuration of the other particles $\xi_t$, and so is not Markovian with respect to its own history.  However, in many settings, one believes that it behaves as a homogenized random walk.
  
Such results with respect to translation-invariant interacting particle systems, in particular mass conserving exclusion and zero-range processes, on $\Z^d$ have been shown when the initial configuration of particles are governed by a stationary or `local equilibrium' distributions.  See \cite{CS}[Section 1.4] for a review with respect to exclusion processes, and \cite{Szr}, \cite{JLS}, \cite{JLS1} for a discussion with respect to zero-range models.    
These descriptions, whether functional law of large numbers or diffusion limits, typically involve the hydrodynamic scaled mass density evolution of the particles in the system.   

Recently, a hydrodynamic limit of a zero-range process in a Sinai-type random environment in a one dimensional torus $\T_N=\Z/N\Z$ was identified in \cite{LPSX}, \cite{FHSX}, \cite{FX} via a two step procedure in terms of a singular, nonlinear stochastic partial differential equation (SPDE).  The aim of this article is to study associated scaling behaviors with respect to a tagged particle in such an inhomogeneous system.  In a nutshell, the limiting diffusion limit will involve the Sinai-type external random environment, as expressed via the hydrodynamic density and other local averages.
A main point is that such a limit formulates a microscopic basis for a class of `singular' Brox-type diffusions, as we will try to explain.

In this sense, our tagged particle results form a natural complement to the hydrodynamics and SPDE convergences in \cite{LPSX}, \cite{FHSX}, \cite{FX}.  These results also generalize the tagged particle diffusion limit in translation invariant zero-range settings without a random environment \cite{JLS}, \cite{JLS1}.

To aid in the description of the results in the article, we first discuss several components in the next subsections.

%===================================
\subsection{Random Environment}

Our environment is built from `Sinai' random environments on  $\Z$.
Namely, consider independent, identically distributed (iid) random variables $\{u_k : k \in \Z\}$ such that $c \leq u_0 \leq 1 - c$ where $0 < c < \frac{1}{2}$ and $E\left[\log\left(\frac{u_0}{1 - u_0}\right) \right] = 0$.  
Let
$U_n$ denote the discrete-time random walk in this random environment (RWRE) with $U_0 = 0$:
\begin{equation}
    P(U_{n+1} = U_n + 1 ~|~ U_n, \{u_k\}) = 1 - P(U_{n+1} = U_n - 1 ~|~ U_n, \{u_k\}) = u_{U_n}
\end{equation}
for $n\geq 1$.
In \cite{sinai1982}, it was proved that $U_n$ scales with the order of $(\log n)^2$. Specifically, with $\sigma^2 = E[(\log (u_0(1-u_0)^{-1}))^2] > 0$, the ratio $\sigma^2 U_n/(\log n)^2$ converges weakly in the annealed sense to a non-trivial random variable $U_\infty$, whose law does not depend on $\sigma$.

A continuous analog of the `Sinai' RWRE was introduced in \cite{brox1986}. Informally, $\sigma$-Brox diffusion is described by the stochastic differential equation (SDE),
\begin{equation*}
    dX^{br}_t = dB_t - \frac{1}{2} W'(X^{br}_t) \,dt ~~,~~ X^{br}_0 = 0.
\end{equation*}
Here, $B, W_+, W_-$ are three independent standard Brownian motions on $\R$, and $W$ is a two-sided Brownian motion:  $W(0) = 0$, $W(x) = \sigma W_+(x)$ for $x > 0$, and $W(x) = \sigma W_-(-x)$ for $x < 0$. 
More rigorously, $\sigma$-Brox diffusion is defined in terms of scale and time-change:
\begin{equation}
    X^{br}_t = s^{-1}(B_{T^{-1}(t)}) ~~\text{where}~~
    \begin{cases}
        \displaystyle s(y) = \int_0^y e^{W(z)} \,dz ~~~,~ y \in \R\\
        \displaystyle T(t) = \int_0^t e^{-2 W(s^{-1}(B_s))}\,ds ~~~,~ t \geq 0.
    \end{cases}
\end{equation}
Analogous to Sinai RWRE, it was shown in \cite{brox1986}, when $\sigma=1$, that $X^{br}_t/(\log t)^2$ converges in distribution to the same limit $U_\infty$.

Interpolating between these two processes,
Seignourel considered in \cite{seignourel2000} environments scaled by $\sqrt{N}$, where $N$ is a scale parameter.  An effective example is $u_k^N = 1/2 + r_k/\sqrt{\sigma^2 N}$ where $\{r_k : k \in \Z\}$ are i.i.d bounded random variables with mean $0$ and variance $\sigma^2$.  
Consider now the diffusively scaled random walk, denoted $U^N_\cdot$, in this array of random environments.   Then, \cite{seignourel2000} showed $U^N_{\lfloor N^2 t \rfloor}/N$ converges weakly in the annealed sense to the $4$-Brox diffusion $X^{br}_t$.

%===================================
\subsection{Hydrodynamics with Zero-range interactions}
\label{subsec:sketch}

The Zero-range process (ZRP) on $\T_N$ follows a collection of continuous time, dependent random walks. If a site $x \in \T_N$ is occupied by $j$ particles, then a particle at $x$ displaces to $y \in \T_N$ at rate $(g(j)/j) \,p(x, y)$ where $g : \N_0 \rightarrow \R_+$ and $p$ is a transition probability. The model has name `Zero-range' because the infinitesimal interaction between particles is with those on the same site.  We mention the motion of `independent' particles is a case when $g(j)\equiv j$.

The model in a `Sinai'-type or `Seignourel' environment on $\T_N$ is when $p(x,x+1) = u^N_x$ and $p(x,x-1) = 1-u^N_x$, with $p(x,y)=0$ for $y\neq x\pm 1$.  Formally, the rate of change of $\eta_t(x)=\xi_{N^2t}(x)$, the number of particles at $x\in \T_N$ in diffusive scale, is given by the generator action
$$N^2L\eta(x) \sim \frac{1}{2}\Delta_N g(\eta(x)) + 2N\nabla^N \Big(g(\eta(x))\frac{r_x}{\sqrt{\sigma^2 N}}\Big)
$$
where $\Delta_NG(x) = N^2(G(x+1)+G(x-1)-2G(x))$ and $\nabla^NG(x) = N(G(x+1)-G(x))$ are the normalized second-order and first-order differences.  Since $r_x=S_x - S_{x-1}$ is the difference of partial sums of $\{r_y\}$, formally $r_x/\sqrt{\sigma^2 N} \sim W(x/N) - W((x-1)/N)$ where $W$ is a spatial standard Brownian motion.  In this way, one can postulate that $\pi^N_t = \frac{1}{N}\sum_{x\in \T_N}\eta_t(x)\delta_{x/N}$ converges to $\rho(t, x)dx$ where
\begin{align}
\label{singular-pde}
\partial_t \rho = \frac{1}{2}\Delta \Phi(\rho) -2\nabla \big(W'(x)\Phi(\rho)\big)
\end{align}
and $\Phi$ is a homogenization of the process rate $g$.  Such an equation, a form of a generalized nonlinear parabolic Anderson model, is a singular SPDE since $\nabla W'\in C^{-3/2-}$ and $\rho\in C^{1/2-}$ (and therefore $\Phi(\rho)\in C^{1/2-}$).

Although this `direct' limit is still open, by considering a two step approach \eqref{singular-pde} was recovered as follows.
 In the first step, after `regularizing' the random environment, that is by replacing $r^N_k/\sqrt{N}$ at a site $k=\lfloor xN\rfloor$ by its average over a small $\varepsilon$-macroscopic block 
 \begin{align}\label{intro_W}
 \frac{1}{2N\varepsilon}\sum_{|i-k|\leq N\varepsilon} r^N_i\sim \frac{1}{2N\varepsilon} \big[S_{k+N\varepsilon}/\sqrt{N} - S_{k-N\varepsilon}/\sqrt{N} \big] \sim \frac{1}{2N\varepsilon}\big[W(x + \varepsilon)-W(x-\varepsilon)\big],
 \end{align} 
 both quenched and annealed hydrodynamic limits are found in \cite{LPSX} with a density $\rho^\varepsilon$ satisfying a regularized form of \eqref{singular-pde}, where $W'$ is replaced with $W_\varepsilon'$, a regularized White noise (cf. Theorem \ref{main thm}, Corollary \ref{main_Cor}). In the second step, the densities $\rho(t,x)=\rho^\varepsilon(t,x)$ as $\varepsilon\downarrow 0$ are shown to converge uniformly in a certain space in probability with respect to $W$ to $\rho^0$, the para-controlled solution of the singular SPDE  \eqref{singular-pde} (cf. Theorem \ref{singular hyd thm}); see \cite{GIP} for an in-depth account of para-controlled distributions.

\subsection{Discussion of results for a tagged particle}
Let $X_t$ be the location of a tagged particle at time $t$ in the system.  The diffusively scaled position $\frac{1}{N}X^N_t = \frac{1}{N}X_{N^2t}$ is approximately given as
$$\frac{1}{N}X^N_{t} - \frac{1}{N}X^N_0 \sim 2\int_0^t \frac{g(\eta_s(X^N_{s}))}{\eta_s(X^N_{s})}W'_\varepsilon(X^N_{s})ds + \frac{1}{N}M^N_{t},$$
in terms of a martingale $\frac{1}{N}M^N_{t}$ with quadratic variation $\int_0^t \frac{g(\eta_s(X^N_{s}))}{\eta_s(X^N_s)}ds$.

Given the quenched hydrodynamics proved in the `first step' with fixed $\varepsilon>0$, one hypothesizes that
$\frac{1}{N}X^N_{t}$ converges to $x^\varepsilon_t$ satisfying the stochastic differential equation (SDE),
\begin{equation}
\label{diff eps}
dx^\varepsilon_t = 2\frac{\Phi(\rho(t, x^\varepsilon_t))}{\rho(t,x^\varepsilon_t)}W'_\varepsilon(x^\varepsilon_t) + \sqrt{\frac{\Phi(\rho(t, x^\varepsilon_t))}{\rho(t,x^\varepsilon_t)}}dB_t.
\end{equation}
And in the `second' step, as $\varepsilon\downarrow 0$, one would hope to obtain that $x_t^\varepsilon$ converges in a suitable sense to $x_t^0$, satisfying formally the SDE with $W'$ replacing $W'_\varepsilon$, and more rigorously given in terms of a scale and time-change functions.  These aims were initially mentioned as an open problem in \cite{LPSX}.

A general form of the first step, where $x^\varepsilon_t$ is a weak solution of \eqref{diff eps}, is proved in Theorem \ref{thm alpha tg} with respect to an abstract disorder $\{\alpha^N_k:k\in \T_N\}$ approximating $\alpha(\cdot)\in C(\T)$.
Then, quenched and annealed limits are found in Corollary \ref{cor-tg-quenched} when $\alpha = W'_\varepsilon$.  For the second step, we state quenched and annealed forms for convergence in distribution of $x^\varepsilon_t$ to the singular diffusion limit $x^0_t$ as $\varepsilon\downarrow 0$ in Theorem \ref{thm singular sde}.  We refer to the diffusion $x^0_t$ as `singular' as it involves the para-controlled solution $\rho^0$ of \eqref{singular-pde} and the multiplicative external noise $W'$.  Derivation of such a `singular' diffusion from microscopic interactions, albeit in two steps, appears to be one of the first of its kind.

We comment, in the case of independent particles, when $g(n)\equiv n$, we have $\Phi(\rho)\equiv \rho$.  The first step limit $x^\varepsilon$ would then satisfy a regularized form of Brox diffusion, $dx^\varepsilon_t = 2W'_\varepsilon(x^\varepsilon_t)dt + dB_t$, while in the second step, the limits of these would be to $4$-Brox diffusion $x^0_t=X^{br}_t$.

In another direction, as mentioned earlier, the results reduce to the tagged particle diffusion limits with respect to translation-invariant Zero-range where $p(x, x\pm 1) = 1/2$, if there is no random environment.  That is, if $\alpha$, $W'_\varepsilon$ and $W'$ were set to $0$, one would recover the time-changed Brownian motion limits in \cite{JLS}, \cite{JLS1}.  

Finally, we comment that the proof methods allow for other disorders and that related singular diffusion limits can be shown.  For instance, in in Remark \ref{bridge rmk} we discuss limits with respect to `Brownian-bridge' disorders.

    \subsection{Proof ideas}
    \label{sec:repl_qty}

    The main idea in the first step (Theorem \ref{thm alpha tg}) is to replace the local function $g(\eta_t(X^N_t))/\eta_t(X^N_t)$ by a function of the mass density $\rho(t, X^N_t)$ at $X^N_t$.  
    The intuition is that at a scaled time $N^2t$, the particles in local neighborhoods have had time to mix: locally, the distribution of particles should be in some sort of equilibrium.   The form of this `equilibrium', since the effects of the random environment are of order $O(1/(N\varepsilon))$ (cf. \eqref{intro_W}), with sufficient mixing estimates, may be approximated by that when the model is translation-invariant, without random environment.
    
    Then, $g(\eta_t(X^N_t))/\eta_t(X^N_t)$ should homogenize to its expected value under an invariant measure $\nu^N_\rho$ for the process $(X^N_t, \eta_t)$ without random environment associated to the local density $\rho=\rho(t, x_t)$.  If we condition on the location of $X^N_t$ and then shift the frame so that $X^N_t$ is at the origin, the homogenization would be in form
$H(\rho) = E_{\nu^0_\rho}\left[ \frac{g(\eta(0))}{\eta(0)}\right]$, the expectation with respect to a `frame' measure denoted $\nu^0_\rho$.  Since the tagged particle is at the origin, $\nu^0_\rho$ can be computed in terms of a size bias with respect to the stationary distribution $\Pam_\rho$ of the process governing indistinguishable particles, $\nu^0_\rho = (\eta(0)/\rho)\Pam_\rho$.  Then,
$H(\rho)$ will have formulation $H(\rho) = E_{\Pam_\rho} \left[ g(\eta(0))/\rho \right]  = \frac{\Phi(\rho)}{\rho}$.

This homogenization, in the presence of the random environment, is made precise in the 
`replacement' Lemma \ref{replacement-lemma}, an important part of the proof of the `first step' result.  The scheme of proof of Theorem \ref{thm alpha tg} and Corollary \ref{cor-tg-quenched} is similar to that in \cite{JLS}, \cite{JLS1} without a random environment.  However, due to the inhomogeneity of the random environment, there are many differences. 

 Indeed, the form of the stationary distribution of the process $(X^N_t, \eta_t)$ involves the structure of the random environment as in the hydrodynamics work \cite{LPSX}. 
It will be important to make local particle number truncations to perform the replacements.   We use `monotone coupling', allowed under an `attractiveness' assumption on $g$ (namely that $g$ is an increasing function), to deduce sufficient truncations, along with estimates on the inhomogeneous stationary distribution in \cite{LPSX} to carry out the homogenizations.  The smoothness of the initial continuum density $\rho_0\in C^\beta(\T)$ for $\beta>0$ assumed in Theorem \ref{thm alpha tg} allows to deduce continuity of $\rho(t,x)=\rho^\varepsilon(t,x)$ (not shown in \cite{LPSX}), important to close the local homogenizations and resulting equations.
We comment in the presence of translation-invariance, and linear growth assumptions on $g$, local particle truncations were avoided in \cite{JLS}, while `attractiveness' was also used in \cite{JLS1} to the treat sublinearity of the rate $g$ assumed there.

From Theorem \ref{thm alpha tg}, one can recover immediately the quenched part of Corollary \ref{cor-tg-quenched} when $\alpha=W'_\varepsilon$.  The annealed part in Corollary \ref{cor-tg-quenched} will follow as a consequence.

To recover quenched and annealed Brox-type diffusion limits $x^0$ in the second step when $\alpha = W'_\varepsilon$ (Theorem \ref{thm singular sde}), we apply the It\^{o}-McKean representation of $x^\varepsilon$ in terms of scale $s^\varepsilon$ and time-change $T^\varepsilon$.  With linear growth bound assumptions on $g$, and additional smoothness of the initial density $\rho_0\in C^{1+\beta}(\T)$ for $\beta>0$, we may verify $\rho=\rho^\varepsilon$ is a classical solution, allowing to plug into the framework of the para-controlled limit Theorem \ref{singular hyd thm}.  Then, with uniform convergence limits of $\rho^\varepsilon$ to $\rho^0$ afforded by the para-controlled limit, we will be able to take limit of $s^\varepsilon$ and $T^\varepsilon$ to $s^0$ and $T^0$ as $\varepsilon\downarrow 0$.  In the end, the It\^{o}-McKean form of the limit describes the diffusion $x^0$ in Theorem \ref{thm singular sde}.

%%%%%%%%%%%%%%%%%%%%%%%%%%%%%%

\medskip

\noindent {\bf Plan of the article.} After specifying more carefully the model in Section \ref{sec:zrp} and relevant results and consequences in the literature, we turn to our results for a tagged particle (Theorem \ref{thm alpha tg}, Corollary \ref{cor-tg-quenched}, and Theorem \ref{thm singular sde}) in Section \ref{ch:main_thm}.  In Section \ref{sec:varep>0}, we give the proof of Theorem \ref{thm alpha tg}, assuming a replacement Lemma \ref{replacement-lemma}.  In Section \ref{sec:varep-to-0}, we show Theorem \ref{thm singular sde}.  Finally, in Sections \ref{sec:loc_1b}, \ref{sec:loc_2b}, and \ref{sec:glob_repl}, we complete the proof of Lemma \ref{replacement-lemma} in three parts via `local 1-block', `local 2-block', and  `global replacement' estimates respectively.

\section{Model description}
\label{sec:zrp}

Let $\T_N:=\Z/N\Z$ be the discrete torus for $N\in\N$.  Throughout
this article, we will identify $\T_N$ with $\{1,2,\ldots,N\}$ and also
identify the continuum unit torus $\T$ with $(0,1]$. 
Consider a deterministic `environment' on the discrete torus
$\{\alpha_k^N : k\in \T_N\}$ such that their linear interpolations for
$u\in \T$,
$$Y_u^N = \alpha_{\lfloor Nu\rfloor}^N + (Nu- \lfloor Nu\rfloor) \alpha_{\lfloor Nu\rfloor +1}^N,$$
converge uniformly to $\alpha(u)$ where $\alpha(\cdot)$ is a
continuous function on $\T$.  

We now introduce the zero-range
process on $\T_N$ with respect to this environment.
Let $\N_0 = \{0,1,2,\ldots\}$, and let
$\Sigma_N = \N_0^{\T_N}$ be the (countable) configuration space of particles. 
For a particle configuration $\xi\in\Sigma_N$, the coordinate $\xi(k)$ for
$k\in \T_N$ denotes the number of particles at site $k$.

With respect to a function $g:\N_0 \to \R_+$, denote by $\xi_t$ for $t\ge 0$ the
continuous-time Markov chain, informally
described as follows. Since $\max_{1\leq k\leq N} |\alpha_k^N|$ is uniformly bounded in $N$, take $N$ sufficiently large so that
$|\, \alpha_k^N\,|/N <1/2$ for all $k\in\T_N$. At each location $k\in \T_N$, a clock rings at rate $g(\xi_t(k))$, at which time a particle is selected at random from those at $k$ to move to $k\pm 1$ with probability $(1/2) \pm (\alpha_k^N/N)$.  The case $g(j)\equiv j$ corresponds to when all the particles in the system are independent, each carrying its own exponential rate $1$ clock.

More precisely, what we call the `standard' process $\{\xi_t: t\geq 0\}$ is the Markov
continuous time jump process on $\Sigma_N$, with
generator $L$ given by
\begin{align}
\label{eqn: generator L}
Lf(\xi) &=
\sum_{k\in \T_N} 
\Big\{
g(\xi(k)) \Big( \dfrac12 + \dfrac{\alpha_k^N}{ N}\Big)
\big(f(\xi^{k,k+1}) - f(\xi) \big)\nonumber\\
&\quad \quad +
g(\xi(k)) \Big( \dfrac12 - \dfrac{\alpha_k^N}{ N}\Big)
\big(f(\xi^{k,k-1}) - f(\xi) \big)
\Big\}.
\end{align}
Here, $\xi^{j,k}$ is the configuration obtained from $\xi$ by moving a
particle from $j$ to $k$, that is,
\begin{equation*}
\xi^{j,k} (\ell) \;=\;
\begin{cases}
\xi(j) -1 & \ell=j\;, \\
\xi(k)+1 & \ell=k \;, \\
\xi(\ell) & \ell\not = j\,, k \;.
\end{cases}
\end{equation*}

Our main focus will be the behavior of a tagged or distinguished particle in the system.  
Let $X_t$ denote the location of the tagged particle in $\T_N$ at time $t\geq 0$. Since the dynamics of a particle depends on the location of the other particles, the process $X_t$ by itself is not Markovian. 
However, we may consider the coupled process $(X_t, \xi_t)$ on $\big\{(x,\xi)\in \T_N\times \Sigma_N: \xi(x)\geq 1\big\}$ with respect to the deterministic environment $\{\alpha_x^N\}$.  Such a process is Markovian with generator
\begin{align}
 \label{eq:gen}
        \LL_N f(x, \xi) &= \sum_{\pm} \sum_{y \in \T_N \setminus \{x\}} 
 \Big\{ g(\xi(y))   p_y^{N,\pm}(f(x, \xi^{y,y\pm 1}) - f(x, \xi)) \\
        &\quad\quad + g(\xi(x)) \frac{\xi(x) - 1}{\xi(x)}  p_x^{N,\pm}(f(x, \xi^{x,x\pm 1}) - f(x, \xi))\nonumber\\
        &\quad \quad + g(\xi(x)) \frac{1}{\xi(x)}  p_x^{N,\pm}(f(x\pm 1, \xi^{x,x\pm 1}) - f(x, \xi)) \Big\},\nonumber
\end{align}
 where
$p^{N,\pm}_x := \frac{1}{2} \pm \frac{\alpha^N_x}{N}$.

Of course, $\mathcal{L}_N$ restricted to functions $f(x,\xi) = f(\xi)$ only of the configuration $\xi$ reduces to $L$.

We also observe the restriction of $\LL_N$ to functions $f(x,\xi)=f(x_0, \xi)$, for a fixed $x_0\in \T_N$, is itself a generator of the process $\xi_t$ where $\xi_t(x_0)\geq 1$ and the tagged particle does not move, but is always at $X_0=x_0$:
\begin{align*}
       & \LL^{env, x}_N f(x_0,\xi) = \sum_{y \in \T_N \setminus \{x_0\}} g(\xi(y))  \left[ p_y^{N,+}(f(x_0,\xi^{y,y+1}) - f(x_0,\xi)) + p_y^{N,-}(f(x_0, \xi^{y,y-1}) - f(x_0, \xi)) \right]\nonumber\\
        &\quad + g(\xi(x_0)) \frac{\xi(x_0) - 1}{\xi(x_0)} \left[ p_{x_0}^{N,+}(f(x_0, \xi^{x_0,x_0+1}) - f(x_0, \xi)) + p_{x_0}^{N,-}(f(x_0,\xi^{x_0,x_0-1}) - f(x_0,\xi)) \right].
        \end{align*}

For each $N$, we will later observe the evolution of the tagged particle and zero-range process
when time is speeded up by $N^2$.
We define processes
$$X^N_t := X_{N^2t} \ \ {\rm and \ \ } \eta_t: = \xi_{N^2t},$$
generated by $N^2\LL_N$ 
for times $0\leq t\leq T$, where $T>0$ can be any
fixed time horizon. 

\subsection{Assumptions on $g$}
\label{g assump section}
To avoid degeneracies, we suppose that $g(0)=0$ and $g(k)>0$ for $k\geq 1$.   We now state further conditions on $g$ that we will use in various combinations in the main theorems:

\begin{enumerate}
\item[(A)] $g(k+1)\geq g(k)$ for $k\geq 0$;
\item[(LG)]
$\sup_{k\in \N}|g(k+1) - g(k)| \leq g^* <\infty$;
\item[(M)] There is an $m\geq 1$ and $a_0>0$ such that $g(k+m)-g(k)\geq a_0>$ for $k\geq 0$.
\end{enumerate}

We mention how these properties are
used in several places to make estimates.

Condition (A), often called `attractiveness' allows a `basic coupling'
 (cf. \cite{Andjel}, \cite{liggett1985interacting}, Theorem II.5.2 in \cite{KL}):  If $dR$ and
$dR'$ are initial measures of two standard processes $\xi_0$ and $\xi'_0$ at time $t=0$, and
$dR\ll dR'$ in stochastic order, then the distributions of $\xi_t$ and
$\xi'_t$ at times $t\geq 0$ are similarly stochastically ordered. We will primarily use (A) to 
bound above and below mean local particle densities at sites, and to make `$\log(N)$' site particle truncations Lemma \ref{loc trun lemma}
in the $1$ and $2$-blocks replacement estimates
Lemmas \ref{lem:L1b} and \ref{lem:L2b}.

 Condition (LG) is used in several places to bound $g(j)\leq g^*j$, and also in the $1$ and $2$ block estimates.
   Condition (M) is used to deduce $g(j)/j\geq g_*$ for a positive constant $g_*$.  Such bounds are used to plug into assumptions for the singular hydrodynamic limit Theorem \ref{singular hyd thm}.  One also deduces (LG) and (M) give that $j\geq 1\mapsto g(j)/j$ is Lipschitz:  $|g(j)/j - g(m)/m| \leq |g(j)-g(m)|/j +g(m)|j-m|/(mj) \leq 2(g^*/j)|j-m|\leq 2g^*|j-m|$.
   
   Importantly, also in combination, (LG), (M) imply a mixing property of the process:
Let
$b_{l,j}$ be the
inverse of the spectral gap of the standard process with reflecting boundary conditions, in a null environment that is when $\alpha^N_k\equiv 0$, defined on the cube
$\Lambda_l = \{-l,\ldots, l\}$ with $j$ particles; see Section
\ref{spec_gap section} for precise definitions.
Under (LG) and (M) we have
\begin{equation}
\label{spec_gap_condition}
b_{l,j} = O(l^2),
\end{equation}
 uniformly in $j\geq 0$ \cite{LSV}.  
Such a bound
 is used to compare the inhomogeneous system in the environment $\{\alpha^N_k\}$ with a translation-invariant system, when $\alpha^N_k\equiv 0$.

 We comment that, although \cite{JLS} also assumed (LG) and (M), a more general class of $g$'s might be considered.  Especially, (M) might be relaxed so that bounded or sublinear rate processes are allowed for the regularized tagged particle limit (Theorem \ref{thm alpha tg}) as in the work \cite{JLS1}.  However, (LG), (M) give that $g_*j\leq g(j)\leq g^*j$ and therefore $g_*\leq \Phi(\rho)/\rho \leq g^*$ (cf. \eqref{Phi_eqn}), an assumption in \cite{FHSX}, \cite{FX}.  Since these citations play a role in the singular diffusion limit Theorem \ref{thm singular sde}, to have a unified set of assumptions, we have specified the class as above.

%%%%%%%%%%%%%%%%%%%%%%%%%%%%%%%%%%%%%%%%%%%%%%%%%%%%

\subsection{Quenched random environment formulation}
\label{quenched-section}
Let $\left\{ r_x \right\}_{x\in\N}$ be a sequence of i.i.d.\,random
variables with mean $0$ and variance $0<\sigma^2 <\infty$.
Let $s_0 = 0$ and for $n\geq 1$, let $s_n = \sum_{k=1}^n r_k $. For
$0\leq u \leq 1$, let
\begin{equation*}
Y^N_u =
 \dfrac{1}{\sigma \sqrt N} 
 s_{\lfloor Nu \rfloor}
+
\dfrac{Nu - \lfloor Nu \rfloor} {\sigma \sqrt N} r_{\lfloor Nu \rfloor
  + 1}\;, 
\end{equation*}
where $\lfloor a \rfloor$, $a\in \R$ stands for the integer part
of $a$.  

It is standard that the random functions $\left\{Y^N_u: 0\leq u\leq 1\right\}$ converge in distribution as $N\uparrow\infty$ to the
Brownian motion on $[0,1]$.  By Skorokhod's Representation Theorem, we
may find a probability space $(\Omega, \FF, \Pa)$ and
$\left\{W^N_u: 0\leq u\leq 1\right\}$, $N\in\N$, mappings from
$\Omega$ to $C[0,1]$, such that, for all $N\in\N$,
$$\left\{Y^N_u: 0\leq u\leq 1\right\}=\left\{W^N_u: 0\leq u\leq
1\right\}$$ in distribution and moreover,
$\left\{W^N_u: 0\leq u\leq 1\right\}$ converges uniformly almost
surely to the standard Brownian motion
$\left\{W_u, 0\leq u\leq 1\right\}$.

 We will now fix an $\omega\in \Omega$ such that 
$\left\{W^N_u(\omega): 0\leq u\leq 1\right\}$ converges (uniformly) to a Brownian path $\left\{W_u(\omega): 0\leq u\leq 1\right\}$.

Following the formulation in \cite{LPSX}, we extend $W_u^N$ as well as $W_u$ to
$u\in[-1,2]$. With $\tilde W_u$ representing either $W^N_u$ or $W_u$,
define 
\[
\tilde W_u =
\begin{cases}
\tilde W_{u+1} - \tilde W_1& u\in [-1,0),\\
\tilde W_{u-1} + \tilde W_1& u\in (1,2].
\end{cases}
\]
Here, in the time intervals $[-1,0)$ and $(1,2]$, the trajectory $\tilde W$ starts respectively from $-\tilde W_1$ and $\tilde W_1$, then displaces according to $\tilde W$ in $[0,1]$.  In this way, the increments of $\tilde W$ are periodic in $\T$.  To simplify notation, we will drop the tilde in the notation for $\tilde W$.

Let $\varepsilon>0$ be a parameter.  Let
also $\psi:[-1,1] \mapsto \R$ be a $C^1$ function with $\int_{-1}^1\psi(x)dx =1$.  For each $N\in \N$ and
$k\in \T_N$, consider an $\e$-regularization of local environments
such that
$$
q_k^N \;\stackrel{d}{=} \;
\frac{1}{\sigma N}
\sum_{|j-k|\leq \lfloor  N\varepsilon \rfloor} r_j\,
\Big[\, \frac{1}{\varepsilon}
\psi \big(\frac{j-k}{N\varepsilon}\big)\, \Big],
$$
namely
\begin{align}
\label{eqn: q and W}
q_k^N &:= \frac{1}{\sqrt{N} \varepsilon}
\sum_{j =  k - \lfloor  N\varepsilon \rfloor}
^{k+ \lfloor  N\varepsilon  \rfloor -1}
W^N_j \, \Big[\, \psi\big(\frac{j-k}{N\varepsilon}\big)
- \psi\big(\frac{j-k+1}{N\varepsilon}\big)\, \Big] \nonumber \\
&\qquad\quad + \;\frac{1}{\sqrt{N}\varepsilon}
W^N_{k+N\varepsilon}\,
\psi \big( \frac{\lfloor  N\varepsilon \rfloor}{N\varepsilon} \big) -
\frac{1}{\sqrt{N}\varepsilon} W^N_{k-1-N\varepsilon}
\psi \big( -\, \frac{\lfloor  N\varepsilon \rfloor}{N\varepsilon} \big) \;.
\end{align}
In particular, when $k/N\rightarrow x\in \T$ as $N\uparrow\infty$, we
have $\sqrt{N} q_k^N$ converges to
\begin{align}
\label{q_conv}
&-\frac{1}{\varepsilon^2}\int_{x-\varepsilon}^{x+\varepsilon} W(u) \psi'\big(\frac{u-x}{\varepsilon}\big)du + \frac{1}{\varepsilon}\big\{ W(x+\varepsilon)\psi(1) - W(x-\varepsilon)\psi(-1)\big\}\\
& = \frac{d}{dx}W * \psi_\varepsilon(x) =: W'_\varepsilon(x), \nonumber
\end{align}
where $\psi_\varepsilon(u) = \frac{1}{\varepsilon}\psi(u/\varepsilon)$.

As $W^N_\cdot$ converges uniformly to $W_\cdot$, by the continuity
of $W$, and the properties of $\psi$, in view of \eqref{eqn: q and W},
there exists a constant $C=C(\omega)<\infty$ such that
\begin{equation*}
\limsup_{N\rightarrow\infty}
\max_{1\leq k\leq N} \left\{\sqrt N \, |q_k^N| \right\} \leq C/\varepsilon\;.
\end{equation*}

As remarked in the introduction, $W'_\varepsilon$ is a smoothing of $W'$.  In particular, $W'_\varepsilon\in C^\gamma$ when $\psi'\in C^\gamma$ for $\gamma\geq 1/2$ and $\psi(\pm 1)=0$.  When one of $\psi(1)$ or $\psi(-1)$ does not vanish, then $W'_\varepsilon\in C^{1/2-}:= \cap_{0<\epsilon< 1/2} C^{1/2-\epsilon}$.  Also, when $\psi'\in C^\gamma$ for $\gamma<1/2$, then $W'_\varepsilon \in C^{1/2-}$. 
 A natural case is when $\psi(x) = (1/2) \id_{[-1,1]}(x)$ for which
$W'_\varepsilon(x) = (2\varepsilon)^{-1}[W(x+\varepsilon) -
W(x-\varepsilon)]$. 

 Here, and for later use, we denote by $C^\gamma(\Y)$ for $\gamma\geq 0$, and by $C^{\gamma_1,\gamma_2}(\Y)$ for $\gamma_1,\gamma_2\geq 0$, the standard H\"older spaces of functions on respective spaces $\Y$.

In this article, with respect to the above quenched setting, we focus on
the deterministic sequence 
\begin{align}
\label{alpha defn}
\alpha_k^N \equiv \sqrt{N}q_k^N \ \ {\rm  and\ \ }
\alpha(u) \equiv W'_\varepsilon(u),
\end{align}
although other periodic sequences, such as $a_k^N \equiv \alpha_k^N - \tfrac{1}{N}\sum_{j\in \T_N}\alpha_j^N$ and the limit $a(u) \equiv W'_\varepsilon(u) -\int_\T W'_\varepsilon(v)dv$, which corresponds to Brownian-bridge random environments, could be considered.

\subsection{Invariant measures} \label{subsec: invariant measure} We first consider invariant measures for $L$, generating the standard process of indistinguished particles.  Then, we consider invariant measures for the system with a tagged particle generated by $\LL_N$.
In the following, with respect to a given probability measure $\mu$,
we denote by $E_\mu$ and ${\rm Var}_\mu$ its expectation and variance.

The
building block for the invariant measures of $L$ is $\{\Pb_{\phi}\}$, a
family of Poisson-like distributions indexed by `fugacities' $\phi\geq 0$. For each $\phi$, the measure $\Pb_{\phi}$ on $\N_0$ is
defined by
\[
\Pb_{\phi}(n) =
\dfrac{1}{\ZZ(\phi)} \dfrac{\phi^n} {g(n) !},
\quad \text{for } n\geq 0,
\]

where
\begin{equation*}
    \ZZ(\phi) = \sum_{k=0}^\infty \frac{\phi^k}{g(k)!}
    ~~\text{and}~~
    g(n)! = g(n) \, g(n - 1) \cdots g(1) ~,~ g(0)! = 1.
\end{equation*}
The family $\{
\Pb_\phi: \phi \in [0, \phi^{*})\}$ is well defined
where $\phi^{*}$ is the radius of convergence for $\ZZ$. Under condition (M), $\phi^{*}=\lim_{j\uparrow\infty}g(j)=\infty$.  Hence, $\Pb_\phi$ satisfies the (FEM) condition in p. 69 of \cite{KL}.

Let $R(\phi) = E_{\Pb_\phi} [ Z ]$, where $Z(n) = n$, be the mean of
the distribution $\Pb_\phi$. A direct computation yields
$R'(\phi)>0$, $R(0)=0$ and, as $\phi^*=\infty$, we have
$\lim_{\phi\to\phi^*} R(\phi) = \infty$. 
Since
$R$ is strictly increasing, it has an inverse, denoted by
$\Phi: \R_+ \to \R_+$, which is also strictly increasing with $\Phi(0)=0$.  We may parametrize the family of
distributions $\Pb_{\phi}$ by its mean. For $\rho\geq 0$, let
$\Qb_\rho = \Pb_{\Phi(\rho)}$, so that
$E_{\Qb_\rho} [ Z] = E_{\Pb_{\Phi(\rho)}} [ Z ] = R(\Phi(\rho)) =
\rho$.

A straightforward computation yields that
$E_{\Pb_\phi} [ g(Z) ] = \phi$ for $0\leq \phi\leq \phi^*=\infty$. Thus,
\begin{equation}
\label{Phi_eqn}
\Phi(\rho) = E_{\Pb_{\Phi(\rho)}} [ g(Z) ]
= E_{\Qb_{\rho}} [ g(Z) ]\ \;, \quad \rho\,\ge\, 0\;.
\end{equation}
Recall, under condition (LG) that $g(k) \leq g^*k$, and therefore
$\Phi(\rho) \leq g^*\rho$.  Under (M), recall there is $g_*>0$ such that $g(k)\geq g_*k$, and so $\Phi(\rho)\geq g_*\rho$.  A simple
computation yields that $\Phi'(\rho) = \Phi(\rho)/\sigma^2(\rho)$
where $\sigma^2(\rho)$ is the variance of $Z$ under $\Qb_\rho$.
Moreover, one may compute that $\Phi\in C^\infty$ is a smooth function.

We note, in the case $g(k)\equiv k$, that $\Phi(\rho)\equiv \rho$, and $\Pb_\phi$ is a Poisson measure with mean $\phi$. 

Fix a vector $( \phi_{k,N} : k\in\T_N)$ of non-negative real numbers.
Denote by $\Pam_N=\Pam_N(\cdot;
 \{\phi_{k,N}\})$ the product measure on $\N_0^{\T_N}$ whose
marginals are given by
\begin{equation}
\Pam_N (\xi(k) = n) = \Pb_{\phi_{k,N}}(n),
\quad \text{for } k\in \T_N, n\geq 0.
\label{star marginal}
\end{equation}
It is straightforward (cf.~\cite{Andjel}) to check that $\Pam_N$
is invariant with respect to the generator $L$ in \eqref{eqn:
  generator L} as long as the fugacities $\{\phi_{k,N}\}_{k\in \T_N}$
satisfy:
\begin{equation} \label {eqn: invariant phi}
\Big( \dfrac12 + \dfrac{\alpha_{k-1}^N}{ N}\Big)\phi_{k-1,N}
+
\Big( \dfrac12 - \dfrac{\alpha_{k+1}^N}{ N}\Big)\phi_{k+1,N}
=
\phi_{k,N},
\quad
k=1,2,\ldots,N.
\end{equation}

Notice that $\{c \phi_{k,N}\}_{k\in \T_N}$, for $c\geq 0$, is a solution
of \eqref{eqn: invariant phi} if $\{\phi_{k,N}\}_{k\in \T_N}$ is a
solution.  In particular, any solution gives rise to a one-parameter
family of solutions $\Pam_{N,c} =\Pam_N(\cdot; \{c\phi_{k,N}\})$ for $c\geq 0$. In Lemma 2.1 in \cite{LPSX}, 
it is shown that \eqref{eqn: invariant phi} admits a solution, unique up to multiplicative constant, that is strictly positive.

We now restate the following useful estimates of the `fugacities' given in Lemma 2.2 in \cite{LPSX}.
Let
$ \phi_{\max,N} = \max_{1\leq k\leq N} \left\{\phi_{k,N} \right\}$
and
$ \phi_{\min,N} = \min_{1\leq k\leq N} \left\{\phi_{k,N}
\right\}$.

\begin{lemma} 
\label {lem: uniform bounds on phi max min}
Let $\{\phi_{k,N}\}_{k\in \T_N}$ be a solution of \eqref{eqn: invariant phi}. Then,
there exist constants $C_1, C_2<\infty$ such
that for all $N\in \N$
\[
1\leq  \dfrac{\phi_{\max,N}}{\phi_{\min,N}} \leq C_1\quad
 \text{and} \quad
  \max_{1\leq k\leq N} |\phi_{k,N} - \phi_{k+1,N}|
 \leq \frac{C_2}{N}\, \phi_{\max,N}.
\]
As a consequence, if $\phi_{\max, N}=C$ then $\phi_{\min, N}\geq CC_1^{-1}>0$.
\end{lemma}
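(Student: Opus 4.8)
The plan is to recognize \eqref{eqn: invariant phi} as the stationarity equation of a nearest-neighbour random walk on the cycle and to solve it via a discrete scale function. Write $u_k := \tfrac12 + \tfrac{\alpha^N_k}{N}$ and $q_k := \tfrac12 - \tfrac{\alpha^N_k}{N}$, so that $u_k + q_k = 1$; since $M := \sup_{N}\max_{1\le k\le N}|\alpha^N_k|$ is finite, for all large $N$ one has $\tfrac14 \le u_k,q_k \le \tfrac34$. Introduce the discrete current $J_k := u_k\phi_{k,N} - q_{k+1}\phi_{k+1,N}$. A one-line computation gives $J_{k-1} - J_k = u_{k-1}\phi_{k-1,N} + q_{k+1}\phi_{k+1,N} - \phi_{k,N}$, so \eqref{eqn: invariant phi} says precisely that $J_k \equiv J$ is independent of $k$ around $\T_N$.

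I would then pass to the variables $S_k := \phi_{k,N}/A_k$, where $A_0 := 1$ and $A_k := \prod_{i=0}^{k-1} u_i/q_{i+1}$. Using $A_{k+1} = A_k\, u_k/q_{k+1}$, the first-order relation $q_{k+1}\phi_{k+1,N} = u_k\phi_{k,N} - J$ becomes $S_{k+1} = S_k - J/(q_{k+1}A_{k+1})$; thus every increment of $k \mapsto S_k$ carries the sign of $-J$, so $S$ is monotone, while periodicity of $\phi_{\cdot,N}$ forces $A_N S_N = S_0$, i.e. $S_N = S_0/A_N$. The key estimate is that $\log A_k = \sum_{i=0}^{k-1}\big(\log u_i - \log q_{i+1}\big)$ is a sum of at most $N$ terms, each of size $O(1/N)$ uniformly in $N$ because $u_i,q_i = \tfrac12 + O(1/N)$; hence $|\log A_k| \le C(M)$ for all $k$ and $N$, so $A_k$, and in particular $A_N$, remain in a fixed compact subinterval of $(0,\infty)$. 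Taking $\phi_{0,N}>0$ without loss of generality (so $S_0>0$), monotonicity traps every $S_k$ between the comparable positive numbers $S_0$ and $S_0/A_N$, and therefore $\phi_{k,N} = A_kS_k$ is comparable to $\phi_{0,N}$, uniformly in $k$ and $N$. This yields strict positivity of $\phi_{\cdot,N}$ and the first bound $\phi_{\max,N}/\phi_{\min,N} \le C_1$.

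For the gradient estimate I would exploit the constancy of $J$ directly. From the identity $J = \tfrac12(\phi_{k,N}-\phi_{k+1,N}) + \tfrac{\alpha^N_k}{N}\phi_{k,N} + \tfrac{\alpha^N_{k+1}}{N}\phi_{k+1,N}$, summing over $k = 1,\dots,N$ makes the telescoping part vanish by periodicity and gives $J = \tfrac{2}{N^2}\sum_{k=1}^N \alpha^N_k\phi_{k,N}$, so $|J| \le \tfrac{2M}{N}\phi_{\max,N}$. Substituting this back into the same identity, $\phi_{k,N}-\phi_{k+1,N} = 2J - \tfrac{2\alpha^N_k}{N}\phi_{k,N} - \tfrac{2\alpha^N_{k+1}}{N}\phi_{k+1,N}$, and the triangle inequality gives $|\phi_{k,N}-\phi_{k+1,N}| \le (8M/N)\,\phi_{\max,N}$, which is the second bound with $C_2 = 8M$. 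The final assertion is then immediate from $\phi_{\max,N}\le C_1\phi_{\min,N}$.

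The one point I expect to require care is the uniform bound on $A_k$: each increment $\log(u_i/q_{i+1})$ is only $O(1/N)$, yet one sums $\Theta(N)$ of them, so the total is not small — but it is $O(1)$, and $O(1)$ is exactly what is needed here, since we only require $A_k$ to be confined to a fixed compact subset of $(0,\infty)$, not to converge to $1$. Everything else — deriving the constancy of the current, the change of variables, and the two uses of the triangle inequality — is routine, provided $N$ is large enough that $u_k$ and $q_k$ stay bounded away from $0$ and $1$, which is part of the standing assumption on $\{\alpha^N_k\}$.
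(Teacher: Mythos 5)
Your proof is correct: the constancy of the discrete current $J_k$, the passage to the scale variable $S_k=\phi_{k,N}/A_k$ with its resulting monotonicity, the $O(1)$ (not $o(1)$) bound on $\log A_k$, the positivity of every $S_k$ (hence every $\phi_{k,N}$) from monotonicity together with $S_N=S_0/A_N>0$, and the two triangle-inequality estimates giving $|J|\le \tfrac{2M}{N}\phi_{\max,N}$ and then $|\phi_{k,N}-\phi_{k+1,N}|\le \tfrac{8M}{N}\phi_{\max,N}$ all check out. The paper itself does not prove this lemma but restates it from Lemma 2.2 of \cite{LPSX}; your argument is the standard self-contained derivation for a stationary nearest-neighbour equation on a cycle and yields exactly the asserted conclusions.
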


  Let $\{\rho_{k,N} =E_{\Pb_{\phi_{k,N}}}[Z]\}_{k\in \T_N}$ be the mean values where $\{\phi_{k,N}\}_{k\in \T_N}$ satisfies \eqref{eqn: invariant phi}.  Since by Lemma \ref{lem: uniform bounds on phi max min}, the parameters $\phi_{k,N}$ are uniformly bounded above and below, we have that $\rho_{k,N}$ is also uniformly bounded above and below.

We now turn to stationary measures for the process $(X_t, \xi_t)$ generated by ${\mathcal L}_N$.

\begin{proposition}
    The measure $\nu^N$ on  $\T_N \times \Sigma_N$ given by
    \begin{equation}
    \label{nu N def}
        \nu^N(x, \xi) = \frac{\xi(x)}{ \sum_{y \in \T_N} \rho_{y,N}} \, \Pam_N(\xi)
    \end{equation}
is invariant for the joint process generated by $\mathcal{L}_N$.
Moreover, for each fixed $x_0\in \T_N$, the measure $\nu^{env,x_0}$ on $\Sigma_N$ given by $\nu^{env,x_0}(\xi)=\nu^N(\xi|X=x_0)=\frac{\xi(x_0)}{\rho_{x_0,N}}\Pam_N(\xi)$ is invariant for the process generated by $\mathcal{L}^{env, x_0}_N$.

    \label{prop:nu_stat_meas}
\end{proposition}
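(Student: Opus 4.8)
\textbf{Proof plan for Proposition \ref{prop:nu_stat_meas}.}
The plan is to verify directly that $\nu^N$ satisfies $\sum_{(x,\xi)} \nu^N(x,\xi)\,\LL_N f(x,\xi) = 0$ for all bounded $f$, exploiting the already-established fact (from \cite{Andjel} and cited above) that $\Pam_N$ with fugacities solving \eqref{eqn: invariant phi} is invariant for $L$. First I would write $\nu^N(x,\xi) = Z_N^{-1}\,\xi(x)\,\Pam_N(\xi)$ with $Z_N = \sum_{y} \rho_{y,N}$, and split $\LL_N$ as in \eqref{eq:gen} into the three groups of terms: (i) jumps of untagged particles away from the tagged site (the $g(\xi(y))$, $y\neq x$, term), (ii) internal jumps at the tagged site that do not move the tag (the $\frac{\xi(x)-1}{\xi(x)}$ term), and (iii) jumps of the tagged particle itself (the $\frac{1}{\xi(x)}$ term). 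The weight $\xi(x)$ is designed precisely so that the size-biasing $\frac{1}{\xi(x)}g(\xi(x))\cdot\xi(x) = g(\xi(x))$ turns a tagged-particle jump into an ordinary particle jump; this is the mechanism to keep in mind throughout.

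The key computation is a change of variables in the sum over configurations. For a fixed test function, group the terms according to the bond across which a particle crosses. The cleanest route: observe that $\sum_{(x,\xi)}\nu^N(x,\xi)\LL_N f(x,\xi)$ can be rewritten, after relabeling $\xi \to \xi^{y,y'}$ in the appropriate summands, as a sum over ordered pairs of sites $(z, z\pm 1)$ of terms of the form $\big[\text{flux in} - \text{flux out}\big]$, and one checks these cancel using \eqref{eqn: invariant phi}. The decisive algebraic identity is the detailed-balance-type relation for $\Pam_N$ coming from $\Pb_\phi(n) = \phi^n/(\ZZ(\phi)g(n)!)$, namely $g(n)\Pb_\phi(n) = \phi\,\Pb_\phi(n-1)$, which gives $g(\xi(z))\,\Pam_N(\xi) = \phi_{z,N}\,\Pam_N(\xi^{z,\cdot}\text{-shifted marginal})$ type relations; combining this with \eqref{eqn: invariant phi} is exactly what makes the untagged-particle flux balance. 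For the tagged-particle terms, after multiplying by $\nu^N(x,\xi) = Z_N^{-1}\xi(x)\Pam_N(\xi)$ the factor $\xi(x)\cdot\frac{1}{\xi(x)}g(\xi(x)) = g(\xi(x))$, so a tagged jump from $x$ to $x\pm 1$ carries the same effective weight $Z_N^{-1}g(\xi(x))\Pam_N(\xi)$ as an untagged jump; this lets one merge groups (i) and (iii) into a single ``all particles jump as in $L$'' sum, whose stationarity under $\Pam_N$ is already known, leaving group (ii) (a diagonal-in-$x$ term) which is itself $Z_N^{-1}$ times the $L$-action restricted to moves not involving the tagged particle — handled identically. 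I would present this as: $\sum \nu^N \LL_N f = Z_N^{-1}\sum_\xi \Pam_N(\xi)\, L[\,\tilde f\,](\xi)$ for a suitable $\xi$-dependent family of functions, then invoke invariance of $\Pam_N$ under $L$.

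The main obstacle I anticipate is purely bookkeeping: carefully tracking how the prefactor $\xi(x)$ transforms under the substitution $\xi \mapsto \xi^{y,y\pm1}$ when $y$ or $y\pm 1$ equals $x$ (the tagged site), since $\xi^{y,x}(x) = \xi(x)+1$ and $\xi^{x,y}(x) = \xi(x)-1$, and ensuring no boundary/degenerate term is dropped when $\xi(x) = 1$ (where the tagged particle's only company leaves). One must check the constraint $\xi(x)\geq 1$ is preserved and that terms with $\xi(x)=0$ carry zero weight in $\nu^N$, so they drop out automatically. The second, much shorter, statement about $\nu^{env,x_0}$ follows by the same computation with the tagged-particle jump terms (group (iii)) simply absent: one restricts to functions $f(x,\xi)=f(x_0,\xi)$, notes $\LL_N$ reduces to $\LL_N^{env,x_0}$, and $\nu^N(\xi\mid X=x_0) = \xi(x_0)\Pam_N(\xi)/\rho_{x_0,N}$ is stationary for $\LL_N^{env,x_0}$ by the identical flux-cancellation argument (indeed it is a consequence: conditioning an invariant measure of the joint chain on the event that the tag does not move gives an invariant measure for the frozen-tag dynamics). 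I would close by remarking that strict positivity of the fugacities (Lemma 2.1 of \cite{LPSX}) ensures $Z_N > 0$ and $\rho_{x_0,N}>0$, so both measures are well defined.
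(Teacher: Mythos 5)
Your plan is correct and takes a genuinely different route from the paper. The paper's proof is a direct computation of the $L^2(\nu^N)$-adjoint $\mathcal{L}^*_N$, presented explicitly in ``generator form'' (which already encodes \eqref{eqn: invariant phi}), followed by the immediate observation that $\mathcal{L}^*_N\,1 = 0$; Andjel's invariance of $\Pam_N$ under $L$ is recalled earlier but is \emph{not} invoked in the proposition's proof. Your plan instead reduces invariance of $\nu^N$ to the already-known invariance of $\Pam_N$ under $L$. The decisive, though somewhat buried, observation in your writeup is the size-biasing identity
\begin{equation*}
\sum_{x\in\T_N}\xi(x)\,\mathcal{L}_N f(x,\xi) \;=\; L\Big[\textstyle\sum_{x\in\T_N}\xi(x)\,f(x,\xi)\Big](\xi),
\end{equation*}
valid for every $f$ and every $\xi$. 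From this, $\sum_{x,\xi}\nu^N(x,\xi)\mathcal{L}_N f(x,\xi) = Z_N^{-1}\sum_\xi \Pam_N(\xi)\,LF(\xi) = 0$ with $F(\xi)=\sum_x\xi(x)f(x,\xi)$, and \eqref{eqn: invariant phi} enters only through the citation to Andjel. This is shorter and arguably more conceptual than the paper's adjoint computation, and it isolates the probabilistic content (tagging a uniformly-chosen particle commutes with the unlabeled dynamics). Two refinements to your writeup: the phrase ``a suitable $\xi$-dependent family of functions'' should be replaced by the single function $F(\xi)=\sum_x \xi(x)f(x,\xi)$; and the claim that groups (i) and (iii) ``merge'' while group (ii) is ``handled identically'' is off --- the identity above requires \emph{all three} groups at once (after re-indexing the $(x,y)$ sum by the jumping site $z$, the three pieces $\sum_{x\neq z}\xi(x)(f(x,\xi^{z,z\pm1})-f(x,\xi))$, $(\xi(z)-1)(f(z,\xi^{z,z\pm1})-f(z,\xi))$, and $f(z\pm1,\xi^{z,z\pm1})-f(z,\xi)$ combine exactly to $F(\xi^{z,z\pm1})-F(\xi)$; none of them vanishes under $\Pam_N$ in isolation).

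The soft gap is in the second assertion. The parenthetical ``conditioning an invariant measure of the joint chain on the event that the tag does not move gives an invariant measure for the frozen-tag dynamics'' is not a valid general principle, and the ``identical flux-cancellation argument'' is not literally identical: if you set $F(\xi)=\xi(x_0)f(\xi)$ with $x_0$ fixed, a direct comparison gives
\begin{equation*}
L\big[\xi(x_0)f\big](\xi) - \xi(x_0)\,\mathcal{L}^{env,x_0}_N f(\xi)
\;=\; \sum_{\pm} g(\xi(x_0\mp 1))\,p_{x_0\mp 1}^{N,\pm}\,f(\xi^{x_0\mp1,x_0})
\;-\; \sum_\pm g(\xi(x_0))\,p_{x_0}^{N,\pm}\,f(\xi),
\end{equation*}
which is a nonzero boundary term at $x_0$. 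After integrating against $\Pam_N$ and changing variables $\xi\mapsto\xi^{x_0\mp1,x_0}$ in the first sum (using $g(n)\Pb_\phi(n)=\phi\,\Pb_\phi(n-1)$), the residual is proportional to
$\big[\phi_{x_0-1,N}\,p^{N,+}_{x_0-1} + \phi_{x_0+1,N}\,p^{N,-}_{x_0+1} - \phi_{x_0,N}\big]\sum_\xi\Pam_N(\xi)\,g(\xi(x_0))\,f(\xi)$,
which vanishes precisely by \eqref{eqn: invariant phi} at $k=x_0$. So the second claim does go through by a flux-cancellation, but it needs this extra one-bond computation spelled out; it does not follow formally from the first claim nor from a conditioning principle.
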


We comment, analogous to the invariant measures of $L$, since the parameters $\{\phi_{k,N}\}_{k\in \T_N}$ when multiplied by a constant $c$ still satisfy \eqref{eqn: invariant phi}, the joint process generated by $\mathcal{L}_N$ also has a family of invariant measures indexed by $c\in \R$.  

The proof of Proposition \ref{prop:nu_stat_meas} is a straightforward but long computation.  In particular, we may compute the $L^2(\nu^N)$ adjoint $\mathcal{L}^*_N$ as
\begin{align*}
\mathcal{L}^*_Nf(x,\xi) &= \sum_{y\neq x}\sum_\pm \big (f(x, \xi^{y,y\pm1})-f(x,\xi)\big)g(\xi(y))p^{N,\mp}_{y\pm 1} \frac{\phi_{y\pm 1, N}}{\phi_{y,N}}\\
& \quad \quad +\sum_{\pm}\big(f(x, \xi^{x,x\pm 1})-f(x,\xi)\big) g(\xi(x))\frac{\xi(x)-1}{\xi(x)} p^{N,\mp}_{x\pm 1, N} \frac{\phi_{x\pm 1, N}}{\phi_{x,N}}\\
& \quad \quad +\sum_{\pm} \big(f(x\pm 1, \xi^{x, x\pm 1})-f(x,\xi)\big) \frac{g(\xi(x))}{\xi(x)} p^{N,\mp}_{x\pm 1} \frac{\phi_{x\pm 1, N}}{\phi_{x,N}}.
\end{align*}
We also find the $L^2(\nu^{env, x_0})$ adjoint of $\mathcal{L}^{env, x_0}$, denoted $\mathcal{L}^{*,env, x_0}_N$, is the operator restricted to functions $f(x,\xi) = f(x_0, \xi)$ where $X=x_0$ is fixed.  The relations $\mathcal{L}^*_N 1 = \mathcal{L}^{*,env, x_0} 1 = 0$ imply invariance of $\nu^N$ and $\nu^{env, x_0}$.

We comment in passing, as shown by a straightforward computation, that $\nu^N$ is reversible, that is $\mathcal{L}_N = \mathcal{L}_N^*$, exactly when $\sum_{x\in \T_N} \alpha^N_x = 0$.  For instance, the Brownian bridge sequence $a_k^N$ and $a(u)$ given near \eqref{alpha defn} would satisfy this condition.

\subsection{Invariant measures for the translation-invariant process}
\label{stat measures section}
When $\alpha^N_\cdot\equiv 0$, that is in the translation-invariant setting, $\phi_{k,N}\equiv \phi$ is constant in $k$.  Let $\rho$ be the mean of $\Pb_{\phi}$.  Then, we may write the fugacity $\phi=\Phi(\rho)$. 
In this case,
 the stationary measure $\nu^N$ of $(X_t, \xi_t)$ reduces to 
 the measure denoted as
$$\nu_{\rho}^N(x, \xi) = \frac{\xi(x)}{N\rho}\Pam_\rho,$$
where $\Pam_{\rho} = \prod_{k\in \T_N} \Pb_{\Phi(\rho)}$.  Note that the conditional measure, denoted $\nu^0_\rho$ in the introduction, satisfies $\nu^0_\rho = \nu_\rho^N(\xi\in \cdot| X=0) = \frac{\xi(0)}{\rho}\Pam_\rho(\xi\in \cdot)$.
We comment that the measures $\{\Pam_\rho\}_{\rho\geq 0}$ are well-known as stationary distributions of the translation-invariant process $\xi_\cdot$ governed by $L$ when $\alpha^N_\cdot\equiv 0$ (cf. \cite{JLS}, \cite{JLS1}).

\subsection{Initial measures}

We specify now conditions on the initial measures $\mu^N$ for the zero-range
process with a tagged particle $(X_t, \xi_t)$ generated by $\mathcal{L}_N$.
Denote by $\mu^N_X$ and $\mu^N_L$ the marginals with respect to $X$ and $\xi$ on spaces $\T_N$ and $\Sigma_N$ respectively.

We now fix $\Pam_N$
(cf. \eqref{star marginal}) to be the invariant measure for $\xi_t$ chosen so that
$\phi_{{\rm max}, N}= \max_{k\in\T_N} \left\{\phi_{k,N} \right\}=1$.  Such a choice specifies the normalization or parameter $c$ multiplying a solution $\phi_{k,N}$.  We comment other parameter values could be also be used.  Since the fugacities $\phi_{x,N}\leq \phi_{max,N}=1$, the maximum density $\rho_{max, N}$ (corresponding to $\phi_{max, N}$) is bounded uniformly in $N$.  

Let $\nu^N$ be the associated invariant measure defined in terms of $\Pam_N$ (cf. \eqref{nu N def}). 
Define the relative entropy between measures $\mu_1$ and $\mu_2$ by $\H(\mu_1 | \mu_2):= \int f \ln f \,d\mu_2$ where $f = d\mu_1/d\mu_2$.

\begin{Cond} \label{cond: initial measure} 
The measure $\mu^N$ satisfies the following.
\begin{enumerate}
\item[(a)] We have $\{\mu_L^N\}_{N\in \N}$ is associated with an initial density profile $\rho_0\in L^1(\T)$ for $\beta\geq 0$ in the
sense that for any $G\in C(\T)$ and $\delta>0$
\begin{equation*}
\lim_{N\to \infty} \mu_L^N
\Big[ \, \Big|
\dfrac1N \sum_{k\in \T_N} G\Big(\dfrac kN \Big) \xi(k)
-
\int_\T G(x) \rho_0(x) dx
\Big|
>\delta
\, \Big]
 =0\; .
\end{equation*}

\item[(b)] The relative entropy of $\mu_L^N$ with respect to $\Pam_N$ is
of order $N$. That is, there exists a finite constant $C_0$ such that
$\H(\mu_L^N | \Pam_N)\le C_0 N$ for all
$N\ge 1$.
\item[(c)] We have $\{\mu_X^N\}_{N\in \N}$ converges weakly to the law of a random variable $Z_X$ on $\T$.

\item[(d)] The relative entropy of $\mu^N$ with respect to $\nu^N$ is of order $N$:  There is a finite constant $C_0$ such that $\H(\mu^N|\nu^N) \leq C_0N$ for all $N\geq 1$.
\item[(e)] The marginal $\mu_L^N$ is stochastically bounded above 
and below, 
$\Pam_{N, c_0}\ll\mu_L^N\ll
 \Pam_{N, c_1}$ 
for $0<c_0<c_1<\infty$.  That is, for any increasing coordinatewise function $f:\Sigma_N\rightarrow \R$, we have
$
E_{\Pam_{N,c_0}}[f]\leq 
E_{\mu^N_L}[f]\leq E_{\Pam_{N,c_1}}[f]$.
As a consequence, by Lemma \ref{lem: uniform bounds on phi max min} and properties of $\Pb_\phi$, there is $0<\rho_-=\rho_-(c_0)<\rho_+(c_1)=\rho_+<\infty$ such that
$\Pam_{\rho_-}\ll \Pam_{N, c_0}$ and $\Pam_{N, c_1}\ll \Pam_{\rho_+}$.
In particular,
$\rho_-\leq \min_{k\in \T_N} E_{\mu^N_L}[\xi(k)] \leq \max_{k\in \T_N}E_{\mu^N_L}[\xi(k)]\leq \rho_+$.
\end{enumerate}
\end{Cond}

These are natural conditions and also those which allow to fit into the results in \cite{LPSX} and \cite{FHSX}, \cite{FX}:  (a) specifies an initial law of large numbers for the bulk particle numbers; we will chose later on in some of the results that $\rho_0\in C^\beta(\T)$ for different ranges of $\beta\geq 0$. (b) gives that the initial marginal may differ from the stationary measure $\Pam_N$ at $O(N)$ locations of $\T_N$ in the sense of relative entropy.  (c) specifies that the initial tagged particle position has a weak limit. (d) states that the full initial measure $\mu^N$ may differ from the stationary measure $\nu^N$ in the sense of $O(N)$-relative entropy; such a condition is useful to get control of later time distributions of the joint process.  Finally, (e) allows the monotone coupling mentioned earlier, so that we may bound and truncate local particle numbers; for instance, as in the next Section \ref{truncation section}.

 In the following, we will denote the process
measure and associated expectation governing $\eta_\cdot$ starting
from $\kappa$ by $\P_\kappa$ and $\E_\kappa$.  When the particle system process
starts from $\{\mu^N\}_{N\in \N}$
satisfying Condition
\ref{cond: initial measure}, 
we will denote by $\P_N:= \P_{\mu^N}$ and
$\E_N:= \E_{\mu^N}$, the associated process measure and expectation.

\subsection{Local equilibrium measures}
We observe that Condition \ref{cond: initial measure} is satisfied, for example, by
`local equilibrium' measures
$\left\{\mu^N_{\text{le}}\right\}_{N\in\N}$ associated to macroscopic
profiles $\rho_0\in C^\beta(\T)$ for $\beta\geq 0$ such that $0<\rho_-\leq \rho_0(\cdot)\leq \rho_+$ on $\T$.
  For each $N\in \N$, let
$\mu^N_{\text{le}}$ be the measure on
$\T_N\times \Sigma_N$ given by
\begin{equation}
\label{loc eq definition}
\mu^N_{\text{le}} (x, \xi)  \;=\;
\frac{\xi(x)}{\sum_{k\in \T_N} \rho_0(k/N)}\Pam_{N, \rho_0(\cdot)}(\xi),
\quad \text{for } k\in \T_N
\;. 
\end{equation}
Here, we define $\Pam_{N,\rho_0(\cdot)} =\Pam_N(\cdot; \{\tilde \phi_{k,N}\}) = \prod_{k\in \T_N}\Pb_{\tilde\phi_{k,N}}$ where the parameters $\{ \tilde\phi_{k,N}\}_{k\in \T_N}$ are such
that $E_{\Pam_{N, \rho_0(\cdot)}} [\xi(k)] = \rho_0(k/N)$.  We note when $\rho_0(\cdot)=\rho$ is constant, $\Pam_{N, \rho_0(\cdot)} = \Pam_\rho$.

Then, the marginals $\mu^N_{X, \text{le}}$ and $\mu^N_{L, \text{le}}$ are given by the following mass functions on $\T_N$ and $\Sigma_N$ respectively:
\begin{align*}
\mu^N_{X,\text{le}}(x) = \frac{\rho_0(x/N)}{\sum_{k\in \T_N}\rho_0(k/N)} \ \ {\rm and \ \ }
\mu^N_{L,\text{le}}(\xi) = \frac{\sum_{k\in \T_N}\xi(k)}{\sum_{k\in \T_N}\rho_0(k/N)} \Pam_{N, \rho_0(\cdot)}(\xi).
\end{align*}
One may verify that Condition \ref{cond: initial measure} (a) for the size biased measure $\mu_{L,\text{le}}^N$ holds straightforwardly with respect to macroscopic profile $\rho_0$ as $G$ is uniformly continuous by Chebychev and triangle inequalities.  Similarly, Condition \eqref{cond: initial measure} (c), as $\mu^N_{X,\text{le}}$ converges weakly to $\rho_0(x)/\|\rho_0\|_{L^1(\T)}dx$, holds also.

Also, Condition \ref{cond: initial measure} (b) holds since
$$\H(\mu^N_{L, \text{le}}| \Pam_N) = E_{\Pam_N}\Big[ \log \frac{\sum_k \xi(k)}{\sum_k \rho_0(k/N)} \Big] + \H(\Pam_{N,\rho_0(\cdot)}|\Pam_N).$$
By Lemma 3.4 in \cite{LPSX} or straightforward computation, we observe that $\H(\Pam_{N,\rho_0(\cdot)}|\Pam_N)\leq CN$.  Also, the first term is bounded by $E_{\Pam_N}\big[(\frac{1}{N}\sum_k \xi(k))/(\frac{1}{N}\sum_k \rho_0(k/N))]\leq C$ uniformly in $N$.

In addition, Condition \ref{cond: initial measure} (e) holds :  Note that the factors $\Pb_\phi$ are stochastically ordered in $\phi$.  Also, $\Phi(\rho_-)\leq \min_k \tilde \phi_{k,N} \leq \max_k \tilde \phi_{k,N}\leq \Phi( \rho_+)$.  Then, there are constants
$c_0,c_1$ such that $\max_{k\in \T_N} c_0\tilde \phi_{k,N} \leq \Phi(\rho_-) \leq \Phi(\rho_+)\leq \min_{k\in \T_N}c_1\tilde \phi_{k,N}$.  For increasing coordinatewise functions $f:\Sigma_N \rightarrow \R$, we have $E_{\Pam_{N,c_0}}[f(\xi)] \leq E_{\mu^N_{L, \text{le}}}[f(\xi)]\leq E_{\Pam_{N,c_1}}[f(\xi)]$.

The next lemma asserts that Condition
\ref{cond: initial measure} (d) holds to complete the verification.

\begin{proposition}
  There exists a constant $C(\rho_0) > 0$ depending only on $\rho_0$ such that 
$\H(\mu^N_{le} \,|\, \nu^N) \leq C(\rho_0) N$.
    \label{prop:rel_ent_order_N}
\end{proposition}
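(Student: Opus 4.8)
The plan is to compute the Radon--Nikodym derivative $d\mu^N_{\mathrm{le}}/d\nu^N$ directly; the key point is that it simplifies drastically. Reading off the two densities from \eqref{loc eq definition} and \eqref{nu N def}, the common factor $\xi(x)$ cancels, so that for every $(x,\xi)$
\[
\frac{\mu^N_{\mathrm{le}}(x,\xi)}{\nu^N(x,\xi)}\;=\;\frac{\sum_{y\in\T_N}\rho_{y,N}}{\sum_{k\in\T_N}\rho_0(k/N)}\cdot\frac{\Pam_{N,\rho_0(\cdot)}(\xi)}{\Pam_N(\xi)},
\]
which does not depend on $x$. Since the last factor is a function of $\xi$ alone, its $\mu^N_{\mathrm{le}}$-expectation equals its $\mu^N_{L,\mathrm{le}}$-expectation, whence
\[
\H(\mu^N_{\mathrm{le}}\,|\,\nu^N)\;=\;\log\frac{\sum_{y}\rho_{y,N}}{\sum_{k}\rho_0(k/N)}\;+\;E_{\mu^N_{L,\mathrm{le}}}\!\Big[\log\frac{\Pam_{N,\rho_0(\cdot)}(\xi)}{\Pam_N(\xi)}\Big],
\]
and it remains to bound each summand by a constant multiple of $N$.

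The first summand is in fact $O(1)$: by Lemma \ref{lem: uniform bounds on phi max min} and the normalization $\phi_{\max,N}=1$ one has $\phi_{y,N}\in[C_1^{-1},1]$, so the densities $\rho_{y,N}=R(\phi_{y,N})$ lie in a fixed compact subinterval of $(0,\infty)$ and $\sum_y\rho_{y,N}$ is comparable to $N$; similarly $\rho_-\le\rho_0\le\rho_+$ forces $\sum_k\rho_0(k/N)$ to be comparable to $N$, so the logarithm of their ratio is bounded uniformly in $N$.

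For the second summand, put $\tilde\phi_{k,N}=\Phi(\rho_0(k/N))$ and expand the log-ratio of the two product measures site by site,
\[
\log\frac{\Pam_{N,\rho_0(\cdot)}(\xi)}{\Pam_N(\xi)}\;=\;\sum_{k\in\T_N}\Big(\log\frac{\ZZ(\phi_{k,N})}{\ZZ(\tilde\phi_{k,N})}+\xi(k)\,\log\frac{\tilde\phi_{k,N}}{\phi_{k,N}}\Big).
\]
Both families of fugacities lie in a fixed compact subinterval of $(0,\phi^{*})$ uniformly in $k$ and $N$ --- for $\tilde\phi_{k,N}$ because $\rho_-\le\rho_0\le\rho_+$ and $\Phi$ is continuous, for $\phi_{k,N}$ by Lemma \ref{lem: uniform bounds on phi max min} --- and $\ZZ$ is continuous and strictly positive there, so all the coefficients are $O(1)$ and $|\log(\Pam_{N,\rho_0(\cdot)}(\xi)/\Pam_N(\xi))|\le C(N+\sum_{k}\xi(k))$. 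Since $\mu^N_{L,\mathrm{le}}$ is $\Pam_{N,\rho_0(\cdot)}$ size-biased by the total mass, $E_{\mu^N_{L,\mathrm{le}}}[\sum_k\xi(k)]=E_{\Pam_{N,\rho_0(\cdot)}}[(\sum_k\xi(k))^2]/\sum_k\rho_0(k/N)$, and by independence $E_{\Pam_{N,\rho_0(\cdot)}}[(\sum_k\xi(k))^2]=\sum_k{\rm Var}(\xi(k))+(\sum_k\rho_0(k/N))^2$; the marginal variances are uniformly bounded and $\sum_k\rho_0(k/N)$ is comparable to $N$, so this quantity is at most $CN$. Hence the second summand is bounded in absolute value by $CN$, which completes the proof, with $C(\rho_0)$ depending only on $\rho_\pm$ and the moduli of $R$, $\Phi$ and $\ZZ$ on the relevant intervals.

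I expect no real obstacle; the only slightly delicate point is the size-biasing in $\mu^N_{L,\mathrm{le}}$, which forces the \emph{second} moment of the total mass (rather than its mean) to enter, but this is harmless given the uniform fugacity bounds. The genuinely useful observation is merely that the $\xi(x)$ factors cancel, reducing the estimate from the joint law on $\T_N\times\Sigma_N$ to a comparison of two explicit product measures on $\Sigma_N$.
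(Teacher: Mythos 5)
Your proof is correct and follows essentially the same route as the paper: cancel the common $\xi(x)$ factor so the log-derivative splits into the normalizing-constant ratio (which is $O(1)$) plus the log-ratio of the two product measures, expand the latter site by site into a deterministic $O(N)$ piece and a piece bounded by $C\sum_k\xi(k)$, and then control the $\mu^N_{L,\mathrm{le}}$-expectation of the total mass via the size-bias identity and the second moment of $\sum_k\xi(k)$ under $\Pam_{N,\rho_0(\cdot)}$. The only difference is cosmetic: you make the variance computation explicit, while the paper simply asserts the resulting second-moment expression is $O(N)$.
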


\begin{proof}
    We calculate
$\H(\mu_{le}^N \,|\, \nu^N) = E_{\mu_{le}^N} \left[ \log\left( \frac{\mu_{le}^N(X,\xi)}{\nu^N(X,\xi)} \right) \right]$.
   Write, canceling the term in common $\xi(x)$,
    \begin{equation*}
      \log  \frac{\mu_{le}^N(x, \xi)}{\nu^N(x, \xi)}
        =
      \log  \frac{\sum_{y \in \T_N} \rho_{y,N}}{\sum_{y \in \T_N} \rho_0(y/N)}  +\log \frac{\Pam_{N,\rho_0(\cdot)}(\xi)}{\Pam_N(\xi)}.
   \end{equation*}
   Then, 
   \begin{align}
   \label{help local eq}
   \H(\mu_{le}^N \,|\, \nu^N) = \log  \frac{\sum_{y \in \T_N} \rho_{x,N}(y)}{\sum_{y \in \T_N} \rho_0(y/N)} 
   + E_{\Pam_{N,\rho_0(\cdot)}}\left[\frac{\sum_{x\in \T_N}\xi(x)}{\sum_{x\in\T_N}\rho_0(x/N)}\log \frac{\Pam_{N,\rho_0(\cdot)}(\xi)}{\Pam_N(\xi)}\right].
   \end{align}
   Since the densities are bounded uniformly, the first term is $O(1)$.
    To treat the second term, write
     $$\log \frac{\Pam_{N,\rho_0(\cdot)}(\xi)}{\Pam_N(\xi)} = \sum_{x\in \T_N} \log \frac{\ZZ(\phi_{x,N})}{\ZZ(\tilde\phi_{x,N})} + \sum_{x\in \T_N}\xi(x)\log \frac{\tilde \phi_{x, N}}{\phi_{x,N}}.$$
     Since the fugacities are bounded, the deterministic term on the right-hand side of the above display is $O(N)$ and the other one is bounded by $C\sum_{x\in \T_N}\xi(x)$.  
     Then, the second term in \eqref{help local eq} is bounded by
     $O(N)E_{\Pam_{N, \rho_0(\cdot)}}\left[\frac{\sum_{x\in \T_N}\xi(x)}{\sum_{x\in \T_N}\rho_0(x/N)}\right] + CE_{\Pam_{N,\rho_0(\cdot)}}\left[\frac{\big(\sum_{x\in \T_N}\xi(x)\big)^2}{\sum_{x\in \T_N}\rho_0(x/N)}\right]$,
     which is $O(N)$ as desired.
     \end{proof}
 
\begin{remark}
\rm 
We observe other `local equilibrium measures' which locate the tagged particle at a fixed site $x_0=x_0^N\in \T_N$ may also be considered.  For instance, one may take $\mu^N_{\text{le}}(x_0, \xi) = \big(\OneB_{\{\xi(x_0)\geq 1\}}/E_{\Pam_{N, \rho_0(\cdot)}}[\OneB_{\{\xi(x_0)\geq 1\}}]\big)\Pam_{N, \rho_0(\cdot)}(\xi)$ and $\mu^N_{\text{le}}(x,\xi)=0$ for $x\neq x_0$, as opposed to \eqref{loc eq definition}, and verify that Condition \ref{cond: initial measure} is satisfied.  The sites $x_0=x_0^N$ may be chosen so that $x_0^N/N\rightarrow z\in \T$.
\end{remark}

\subsection{Local particle number estimates via attractiveness}
\label{truncation section}
While the total particle number $\sum_{x\in \T_N}\eta_t(x) = \sum_{x\in \T_N}\eta_0(x)$ for all $t\geq 0$ is conserved, and as the initial total number is $O(N)$ by Condition \ref{cond: initial measure} (e), such mass conservation does not provide control over local particle numbers $\eta_t(x)$ for a fixed $x\in \T_N$ that we will use in the sequel. 

\begin{lemma}
\label{loc trun lemma}
    Starting under an initial measure $\mu^N$ satisfying Condition \ref{cond: initial measure} (e),
     the probability that the maximum number of particles $\ds\max_{x\in\T_N} \eta_t(X^N_t+ x)$ at any time $t\geq 0$ is larger than $\log N$ is small: For all $C>0$, $\ds \limsup_{N \rightarrow \infty} \,\PB^N(\max_{x\in \T_N} \eta_t(X^N_t+x) \geq C \log N) = 0$.
\end{lemma}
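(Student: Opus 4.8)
The plan is to strip away the tagged particle and the time dependence, reducing the estimate to a single-site upper-tail bound for a stationary product measure.

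First I would make two elementary reductions. Since $x\mapsto X^N_t+x$ is a bijection of $\T_N$, we have $\max_{x\in\T_N}\eta_t(X^N_t+x)=\max_{y\in\T_N}\eta_t(y)$, so the tagged particle is irrelevant for this statement. Moreover, as already noted after \eqref{eq:gen}, $\LL_N$ acts on functions of $\xi$ alone as $L$ (the two jump terms at the tagged site recombine via $\tfrac{\xi(x)-1}{\xi(x)}+\tfrac1{\xi(x)}=1$, so $\LL_N f$ is then independent of $x$); hence the $\xi$-marginal of $(X_t,\xi_t)$ is Markov with generator $L$, and under $\PB^N$ the law of $\eta_\cdot=\xi_{N^2\cdot}$ is that of the standard zero-range process, sped up by $N^2$, started from $\mu^N_L$.

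Next I would use attractiveness to dominate by a stationary product measure. By Condition \ref{cond: initial measure}(e), $\mu^N_L\ll\Pam_{N,c_1}$ in the stochastic order, so by Strassen's theorem we may start the basic coupling permitted by (A) from $\eta_0\le\eta_0'$ (coordinatewise) with $\eta_0'\sim\Pam_{N,c_1}$, obtaining $\eta_t\le\eta_t'$ for all $t\ge0$. Since $\{c_1\phi_{k,N}\}$ again solves \eqref{eqn: invariant phi}, $\Pam_{N,c_1}$ is invariant for $L$, so $\eta_t'\sim\Pam_{N,c_1}$ for every $t$, and therefore, uniformly in $t\ge0$,
\[
\PB^N\!\big(\max_y\eta_t(y)\ge C\log N\big)\ \le\ \Pam_{N,c_1}\!\big(\max_y\xi(y)\ge C\log N\big)\ \le\ N\max_{k\in\T_N}\Pb_{c_1\phi_{k,N}}\!\big(Z\ge C\log N\big)
\]
by a union bound over the $N$ independent sites; alternatively one may replace $\Pb_{c_1\phi_{k,N}}$ by $\Qb_{\rho_+}$ via the consequence $\Pam_{N,c_1}\ll\Pam_{\rho_+}$ in Condition \ref{cond: initial measure}(e). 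It then remains to bound the single-site tail. By Lemma \ref{lem: uniform bounds on phi max min} and the normalization $\phi_{\max,N}=1$, the fugacities $c_1\phi_{k,N}$ lie below a constant $\bar\phi<\infty$; by (M) there is $g_*>0$ with $g(k)\ge g_*k$, hence $g(n)!\ge g_*^n n!$, and since $\ZZ(\phi)\ge1$ we get $\Pb_\phi(n)\le(\phi/g_*)^n/n!$. Summing this Poisson-type tail and using $m!\ge(m/e)^m$ gives, for $m=\lceil C\log N\rceil$, a quantity of size $(e\bar\phi/(g_*m))^m=N^{-(1+o(1))C\log\log N}$, which is $o(1/N)$; this yields the claim for every fixed $C>0$, uniformly in $t$.

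The only point of substance is this last step: it is precisely the at-least-linear growth of $g$ in (M) that forces $g(n)!$ to grow like $g_*^n n!$, thereby making the single-site tail super-polynomially small in $N$, which is exactly what is needed to absorb the union bound over the $N$ sites. For bounded rates $g$ the single-site tail would only be geometric and the statement as phrased (for all $C>0$) would fail for small $C$, so (M) is used essentially here; everything else — the bijection, the marginal being Markov, and the monotone coupling — is routine given (A) and the explicit form of $\LL_N$.
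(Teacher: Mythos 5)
Your proof is correct and follows essentially the same route as the paper's: remove the shift via the bijection $x\mapsto X^N_t+x$, observe that the $\xi$-marginal evolves under $L$, dominate by $\Pam_{N,c_1}$ using attractiveness and Condition \ref{cond: initial measure}(e), and finish with a union bound over the independent sites whose tails decay super-polynomially. The paper invokes the (FEM) condition to conclude abstractly; your explicit Poisson-type tail computation from $g(n)!\ge g_*^n n!$ is just the concrete instance of the same fact, and your remark that (M) is what makes the tail super-polynomial is exactly the point.
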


\begin{proof}
We may rewrite, 
$
\max_{x\in \T_N} \eta_t(X^N_t+x) \ = \ \max_{x\in \T_N} \eta_t(x),
$
in terms of the system $\eta_t$, without the shift by $X^N_t$, which is attractive.
Since $\max_x \eta_\cdot(x)$ is an increasing function of $\eta_\cdot$, we have by attractiveness and Condition \ref{cond: initial measure} (e) that
\begin{align*}
\mathbb{P}_N \Big[ \max_x \eta_t(x)  \geq  C\log N \Big]
\le  \mathbb{P}_{\Pam_{N,c_1}}\Big[
\max_x \eta_t(x)\geq C\log N\Big]\;.
\end{align*}
Under the stationary measure $\Pam_{N,c_1}$, the variables $\{\eta_t(x)\}$
are independent with uniform over $x\in \T_N$ exponential
moments due to the (FEM) condition (cf. Section \ref{subsec: invariant measure}) satisfied by the marginals $\Pb_{\phi_{x,N}}$, and that the fugacities $\phi_{x,N}$ are uniformly bounded in $x\in \T_N$.  The claim follows as a consequence.
\end{proof}

%%%%%%%%%%%%%%%%%%%%%%%%%%%%%%%%%

\subsection{Hydrodynamics of the bulk density}
\label{sec03}
Our main theorems for the evolution of $X^N_t$ will involve the hydrodynamic bulk mass density of the system.   We now apply results in \cite{LPSX} to identify this density.
Consider the diffusively scaled particle mass empirical measure:
\begin{equation*}
\pi_t^N(dx)
:=
\dfrac 1 N 
\sum_{k=1}^N
\eta_t (k) \delta_{k/N}(dx)\;,
\end{equation*}
where $\delta_x$, stands for the Dirac mass at $x\in \T$.
Let $\MM_+$ be the space of finite nonnegative measures on $\T$, and
observe that $\pi^N_t\in \MM_+$.  We will place a metric
$d(\cdot,\cdot)$ on $\MM_+$ which realizes the dual topology of $C(\T)$
(see p. 49 of \cite{KL} for a definitive choice).  Here, the
trajectories $\{\pi^N_t: 0\leq t\leq T\}$ are elements of the
Skorokhod space $D([0,T],\MM_+ )$, endowed with the associated Skorokhod
topology.

In the following, for $G\in C(\T)$ and $\pi\in \MM_+$, denote
$\langle G, \pi\rangle = \int_\T G(u)\pi(du)$. 

We now recall the hydrodynamic limit (HDL) for the process $\eta_\cdot$ with respect to the
environment $\alpha_\cdot^N$ in \cite{LPSX}.

\begin{theorem}
\label{main thm}
For initial distributions $\mu_L^N$, where $\mu^N$ satisfies Condition \ref{cond:
  initial measure},
 for $t\in [0,T]$, test function
$G\in C^{\infty}(\T)$, and $\delta>0$, we have
\begin{equation*}
\lim_{N\to \infty} \P_{\mu^N_L}
\Big[ \Big|
\langle G, \pi^N_t \rangle
-
\int_\T G(x) \rho(t,x) dx
\Big|
>\delta
 \Big]
 =0,
\end{equation*}
where $\rho(t,x)$ is the unique weak solution of 
\begin{equation} \label{eqn: with ave}
\begin{cases}
\partial_t \rho(t,x)
=
\dfrac12 \partial_{xx} \Phi(\rho(t,x))
-
2 \partial_x \left(\alpha(x)
\Phi(\rho(t,x))\right),\\
\rho(0,x) = \rho_0(x),
\end{cases}
\end{equation}
in the class of `good' weak solutions given in Definition
\ref{weak_def} below.
\end{theorem}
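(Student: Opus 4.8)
The plan is to follow the classical entropy/relative-compactness scheme for hydrodynamic limits (as in \cite{KL}, adapted to the inhomogeneous setting of \cite{LPSX}), which splits naturally into four parts: tightness, characterization of limit points via the martingale problem, a replacement step (one- and two-block estimates) upgraded to handle the environment $\alpha^N_k$, and finally uniqueness of the limiting PDE. First I would establish tightness of $\{\pi^N_\cdot\}$ in $D([0,T],\MM_+)$. Since the total mass $\frac1N\sum_k\eta_t(k)$ is conserved and $O(1)$ by Condition \ref{cond: initial measure}(e), the sequence lives in a compact subset of $\MM_+$; for the modulus of continuity in time one applies $N^2L$ to the functionals $\langle G,\pi^N\rangle$, $G\in C^\infty(\T)$, obtaining
\begin{equation*}
\langle G,\pi^N_t\rangle - \langle G,\pi^N_0\rangle = \int_0^t \Big\{ \tfrac12\langle \Delta_N G, g(\eta_s)/N \rangle_\cdot + 2\langle \nabla^N(\alpha^N_\cdot G), g(\eta_s)/N\rangle\Big\}\,ds + M^N_t(G),
\end{equation*}
where $M^N_t(G)$ is a martingale whose quadratic variation is $O(1/N)$ by an explicit computation; the integrand is bounded since $g(j)\le g^*j$ under (LG) and the densities are stochastically bounded, giving an Aldous-type tightness criterion.

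Next, to characterize limit points, I would pass to the limit in the above martingale decomposition. The martingale term vanishes in $L^2$; the boundary/drift term requires replacing $g(\eta_s(k))$ by $\Phi$ evaluated at a local average of $\eta$, and then the local average by $\rho(s,k/N)$. This is exactly where the one-block and two-block estimates enter, and it is the main technical obstacle: in the present setting the reference stationary measure $\Pam_N$ is the inhomogeneous product measure \eqref{star marginal} rather than a homogeneous one, so the entropy inequality must be run against $\Pam_N$ (using Condition \ref{cond: initial measure}(b), $\H(\mu_L^N|\Pam_N)\le C_0N$), and the Dirichlet-form bounds must tolerate the $O(1/N)$ perturbation $\alpha^N_k/N$ in the jump rates. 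I would control this by comparing the inhomogeneous Dirichlet form to the translation-invariant one — the rates differ multiplicatively by $p^{N,\pm}_x = \frac12\pm\alpha^N_x/N = \frac12(1+O(1/N))$ — and invoking the spectral gap bound $b_{l,j}=O(l^2)$ from \eqref{spec_gap_condition} (valid under (LG),(M)) on cubes of size $l=\varepsilon N$, exactly as in \cite{LPSX}. Attractiveness (A) together with Lemma \ref{loc trun lemma} lets me truncate local occupation numbers at $\log N$ so that all exponential-moment estimates are uniform. The upshot is that any limit point $\pi_t(dx)=\rho(t,x)dx$ (absolute continuity also follows from the bounded-density coupling) satisfies, for all $G\in C^\infty(\T)$,
\begin{equation*}
\langle G,\rho(t,\cdot)\rangle - \langle G,\rho_0\rangle = \int_0^t \Big\{ \tfrac12\langle \Delta G, \Phi(\rho(s,\cdot))\rangle + 2\langle \nabla(\alpha\, G), \Phi(\rho(s,\cdot))\rangle\Big\}\,ds,
\end{equation*}
which is precisely the weak formulation of \eqref{eqn: with ave}; one also checks the limiting trajectory inherits the regularity and energy bounds required for membership in the class of `good' weak solutions of Definition \ref{weak_def} (the energy estimate comes from the standard bound on $\int_0^T\langle (\nabla\sqrt{\Phi(\rho)})^2\rangle ds$ obtained by testing against the current).

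Finally, uniqueness of `good' weak solutions to \eqref{eqn: with ave} identifies the limit and hence promotes the subsequential weak convergence to full convergence in probability, yielding the stated law of large numbers. Since \eqref{eqn: with ave} is a (uniformly parabolic, because $g_*\le\Phi'\le g^*$ under (LG),(M)) quasilinear equation with the reasonably regular coefficient $\alpha\in C(\T)$, uniqueness in the `good' class follows by the $H^{-1}$-type / Oleinik duality argument standard for such scalar conservation-diffusion equations — and in fact this is already established in \cite{LPSX} (Theorem 2.x there), so for the proof I would simply cite it. The one genuinely new ingredient relative to the homogeneous case is the environment-adapted one- and two-block argument, and I would present that in the detail warranted, deferring the rest to \cite{KL} and \cite{LPSX}.
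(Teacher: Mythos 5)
The paper does not actually prove Theorem~\ref{main thm}; it restates the hydrodynamic limit as Theorem~3.3 of \cite{LPSX} and simply cites it (see also Claim~\#1 in the proof of Theorem~\ref{thm:main_2}). Your outline is nonetheless a faithful high-level description of the argument carried out in \cite{LPSX}: entropy method with the inhomogeneous reference measure $\Pam_N$ (invoking Condition~\ref{cond: initial measure}(b)), Aldous-type tightness from the martingale decomposition, a one/two-block replacement where the Dirichlet form is compared with the $\alpha^N\equiv 0$ dynamics via the spectral-gap bound \eqref{spec_gap_condition} on cubes, truncation of local occupations through attractiveness (A), and finally identification of limit points as good weak solutions together with the uniqueness theorem for that class. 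So the route you sketch is essentially the route taken.

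One small slip worth correcting. Applying $N^2L$ to $\langle G,\pi^N\rangle$ gives the drift term
$\frac{2}{N}\sum_k g(\eta_s(k))\,\alpha^N_k\,\widetilde\nabla^N G(k/N)$, i.e., $\alpha^N_k$ multiplies a discrete derivative of $G$, not the derivative of the product $\alpha^N_\cdot G$. Your display with $\nabla^N(\alpha^N_\cdot G)$ and the subsequent limiting term $2\langle \nabla(\alpha G),\Phi(\rho)\rangle$ would require differentiating $\alpha$, which is only assumed to be in $C(\T)$. The correct limiting weak form carries $2\langle \alpha\,\nabla G, \Phi(\rho)\rangle$, which is indeed the distributional pairing against $-2\partial_x(\alpha\Phi(\rho))$, and this is what Definition~\ref{weak_def}(4) encodes. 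This does not affect the overall structure of your program but should be fixed if written out in full.
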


\begin{definition}
\label{weak_def}
We say $\rho(t,x): [0,T]\times \T \mapsto
[0,\infty)$ is a good weak solution to \eqref{eqn: with ave}
if
\begin{enumerate}
\item[(1)] 
$\int_\T \rho(t,x) dx = \int_\T \rho_0(x) dx$ for all $t\in[0,T]$.
\item[(2)]
$\rho(t,\cdot)$ is weakly continuous, that is, for all $G\in C(\T)$, $\int_\T G(x) \rho(t,x) dx$ is a 
continuous function in $t$.
\item[(3)] There exists an $L^1([0,T]\times
\T)$ function denoted by $\partial_x
\Phi(\rho(s,x))$ such that, for all $G(s,x) \in C^{0,1} \left(
[0,T]\times \T \right)$, it holds
\begin{equation*} 
\int_0^T \int_\T 
\partial_x G(s,x) \Phi(\rho(s,x)) dx ds
=
-\int_0^T \int_\T 
G(s,x) \partial_x\Phi(\rho(s,x)) dx ds.
\end{equation*}
\item[(4)] 
For all $G(s,x) \in C_c^\infty \left( [0,T)\times \T  \right)$
\begin{align*}
&\int_0^T \int_\T \partial_sG(s,x) \rho(s,x) dx ds
+ \int_\T G(0,x) \rho_0(x) dx\\
&\qquad\qquad=
-\int_0^T \int_\T 
\left[
 \frac{1}{2}\partial_{xx}G(s,x) \Phi(\rho(s,x)) 
+
\partial_x G(s,x) [\, \alpha(x) \, \Phi(\rho(s,x)) \,]
\right]dx ds.
\end{align*}
\end{enumerate}
\end{definition}

\begin{remark}\rm
We comment  
that the proof of Proposition 9.2 in \cite{LPSX} yields, exactly in the way the proof of Theorem V.7.1 in \cite{KL} does in the translation-invariant setting when $\alpha^N_\cdot\equiv 0$, the additional bound $\int_0^Tds\int_\T du \frac{|\partial_x \Phi(\rho(s,u))|^2}{\Phi(\rho(s,u))}<\infty$, although we don't use it in this article.
\end{remark}

As a consequence of Theorem \ref{main thm}, we observe that both quenched and annealed hydrodynamic limits follow with respect to random environment 
$\left\{r_k^N := \frac{1}{\sigma N\varepsilon}\sum_{|j-k|\leq \lfloor N\varepsilon \rfloor} r_j \psi\left(\frac{j-k}{\varepsilon}\right): k\in \T_N \right\}$ (cf. Section \ref{quenched-section}).

Let 
$E_P\P^{\omega}_N$ be the annealed probability measure, where $P(d\omega)$ governs the random environment $\omega = \{r_x\}_{x\in \N}$ and $\P^{\omega}_N = \P_N$ is the process measure of the speeded up zero-range process $\eta$, with single particle jump rates $(1/2) \pm r^N_k/\sqrt{ N}$ to the left and right from location $k\in \T_N$, conditioned on the environment.  Recall that $\{q_k^N\}\stackrel{d}{=} \{r_k^N\}$ and that a.s. $\sqrt{N}q_k^N \rightarrow W'_\varepsilon(x)$ when $k/N\rightarrow x$ (cf. \eqref{eqn: q and W}, \eqref{q_conv}).

\begin{theorem}
\label{main_Cor} With respect to the random environments, we have the hydrodynamic limits:

(I) (Quenched HDL) For almost all realizations $\omega$, the statement
of Theorem \ref{main thm} holds with respect to
$\alpha^N_\cdot = \sqrt{N}q_\cdot^N$ and $\alpha = W'_\varepsilon$.

(II) (Annealed HDL) Moreover, the law of $\pi_{\cdot}^N$, under
$E_P\P^\omega_N$, converges weakly to the law of
$\rho(\cdot,x)dx= \rho(\cdot, x; \Phi, W'_\varepsilon)dx$ with respect
to the distribution of Brownian motion $W$.
\end{theorem}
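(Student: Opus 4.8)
The plan is to obtain both statements as consequences of the hydrodynamic limit Theorem \ref{main thm} applied to the regularized environment built in Section \ref{quenched-section}. For part (I), I would fix once and for all an $\omega$ in the full $P$-measure event on which the Skorokhod coupling gives $W^N_\cdot(\omega)\to W_\cdot(\omega)$ uniformly on $[-1,2]$ with $W_\cdot(\omega)$ continuous. The single thing to check is that the deterministic array $\alpha^N_k=\sqrt N\,q^N_k$ satisfies the standing hypothesis of Section \ref{sec:zrp}: that $\sup_N\max_{k}|\alpha^N_k|<\infty$ and that the linear interpolations $Y^N_u$ converge uniformly on $\T$ to the continuous function $\alpha(u)=W'_\varepsilon(u)$. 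The uniform bound is exactly the estimate $\limsup_N\max_{1\le k\le N}\sqrt N|q^N_k|\le C(\omega)/\varepsilon$ recorded after \eqref{eqn: q and W}. For the uniform convergence I would start from the exact identity \eqref{eqn: q and W} and rewrite $\sqrt N q^N_{\lfloor Nu\rfloor}$ as a Riemann-type sum for the integral in \eqref{q_conv}: using $\psi(\tfrac{j-k}{N\varepsilon})-\psi(\tfrac{j-k+1}{N\varepsilon})=-\tfrac1{N\varepsilon}\psi'(\theta_{j,k})$ at intermediate points, the uniform convergence $W^N_\cdot\to W_\cdot$, the uniform continuity of $W$, and the $C^1$ regularity of $\psi$, one sees the difference between $\sqrt N q^N_{\lfloor Nu\rfloor}$ and $W'_\varepsilon(u)$ is $o(1)$ uniformly in $u\in\T$ (the periodic extension of $W^N$, $W$ handles the wrap-around near the endpoints of $\T$). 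Since $W'_\varepsilon\in C^{1/2-}(\T)\subset C(\T)$, the hypothesis of Theorem \ref{main thm} holds with $\alpha=W'_\varepsilon$, and part (I) follows verbatim; much of this uniform-convergence bookkeeping is already carried out in \cite{LPSX} and can simply be quoted.

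For part (II), I would run the usual quenched-to-annealed passage. First I note that the proof of Theorem \ref{main thm} in \cite{LPSX} in fact proves tightness of $\{\pi^N_\cdot\}$ in $D([0,T],\MM_+)$ and identifies every subsequential limit with the unique good weak solution of \eqref{eqn: with ave}; since $t\mapsto\rho(t,\cdot)$ is weakly continuous, this upgrades the quenched conclusion to: for $P$-a.e.\ $\omega$, the law of $\pi^N_\cdot$ under $\P^\omega_N$ converges weakly to the Dirac mass at $\rho^\omega_\cdot:=\rho(\cdot,\cdot;\Phi,W'_\varepsilon(\cdot;\omega))\,dx$. Hence $\E^\omega_N[F(\pi^N_\cdot)]\to F(\rho^\omega_\cdot)$ for $P$-a.e.\ $\omega$ and every bounded continuous $F$ on $D([0,T],\MM_+)$. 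Then, since $F$ is bounded and the maps $\omega\mapsto\E^\omega_N[F(\pi^N_\cdot)]$ (the jump rates depend continuously on $\omega$) and $\omega\mapsto F(\rho^\omega_\cdot)$ (measurable as an a.s.\ limit, or by uniqueness of good weak solutions) are measurable, bounded convergence gives
\begin{equation*}
E_P\,\E^\omega_N[F(\pi^N_\cdot)]\;\longrightarrow\;E_P\big[F(\rho^\omega_\cdot)\big]\;=\;E\big[F\big(\rho(\cdot,\cdot;\Phi,W'_\varepsilon)\,dx\big)\big],
\end{equation*}
with $W$ a standard Brownian motion under $E$. As $F$ is an arbitrary bounded continuous functional, this is exactly the asserted weak convergence of $\pi^N_\cdot$ under $E_P\P^\omega_N$.

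Apart from routine calculation, the two points I expect to require the most care are: (i) the \emph{uniform in $k$} convergence of $\sqrt N q^N_k$ to $W'_\varepsilon$ in part (I), which rests on uniform convergence $W^N\to W$ and the continuity of $W$ and $\psi'$; and (ii) ensuring that the quenched convergence is available at the path level, not merely for one-time marginals, so that the bounded-convergence step in part (II) applies to functionals on $D([0,T],\MM_+)$ — this follows from the tightness estimates of \cite{LPSX}, which are uniform over the chosen environment since $\sup_N\max_{k}\sqrt N|q^N_k|\le C(\omega)/\varepsilon<\infty$ on the relevant event. I do not foresee a genuine obstacle beyond these bookkeeping points; everything else (measurability of the environment-to-solution map, measurability of the quenched expectations) is routine.
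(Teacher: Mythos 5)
Your proof is correct and takes essentially the same approach as the paper: the paper presents Theorem \ref{main_Cor} simply as "a consequence of Theorem \ref{main thm}," with the quenched part obtained by fixing $\omega$ and verifying that $\alpha^N_\cdot=\sqrt N q^N_\cdot$ satisfies the standing hypotheses (uniform boundedness being recorded right after \eqref{eqn: q and W}, uniform convergence following from $W^N\to W$ and the regularity of $\psi$), and the annealed part following by exactly the bounded-convergence argument the authors later spell out for Corollary \ref{cor-tg-quenched}. Your additional remarks about upgrading to path-level convergence via the tightness from \cite{LPSX} and about the measurability of $\omega\mapsto\rho^\omega_\cdot$ are sound bookkeeping that the paper leaves implicit.
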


\subsection{Boundedness and regularity of the good weak solution $\rho$}
\label{bounded section}

It will be useful to specify an a priori bound of $\rho(t,x)$, the good weak solution of \eqref{eqn: with ave}, as well as its regularity depending on the smoothness of $\alpha$.
Define, 
for $\ell\geq 1$,
\begin{equation}
\label{random density}
\eta^\ell(x) := \frac{1}{2\ell +1}\sum_{|k-x|\leq \ell}\eta(k).
\end{equation}
Then, $\eta_t^{\theta N}(\lfloor uN\rfloor) 
				= \frac{2\theta N}{2{\color{blue}\theta} N + 1} \big\langle \frac{1}{2\theta}\OneB_{[-\theta +\tfrac{1}{N}\lfloor uN\rfloor, \theta + \tfrac{1}{N}\lfloor uN \rfloor]},~ \pi_t^N \big\rangle$, for $\theta>0$.

\begin{lemma}
\label{lem:smoothness}
    With initial measure $\mu^N$ satisfying Condition \ref{cond: initial measure}, the following holds.
		
		1. 
		There exists positive constants $\rho_-, \rho^+$ such that the solution to \eqref{eqn: with ave} satisfies $\rho_- \leq \rho(t,x) \leq \rho^+$ for a.e $(t,x) \in [0,T]\times\T$.  
	As a consequence, $b_0\leq \Phi'(\rho(t,x)), \frac{\Phi(\rho(t,x))}{\rho(t,x)}\leq b_1$ for $(t,x)\in [0,T]\times \T$, for $0<b_0\leq b_1<\infty$.

    2. (a) Suppose that $\alpha\in C(\T)$ and $\rho_0\in C^\beta(\T)$ for $\beta>0$.
		Then, we have $\rho(t,x)\in C^{\gamma/2, \gamma}([0,T]\times \T)$ for 
		a $0<\gamma\leq\beta$.  
		
		(b) Now suppose that $\alpha\in C^\beta(\T)$ and $\rho_0\in C^{1+\beta}(\T)$ for 
		$\beta>0$.    
		Let $u(t,x) = \int_0^x \rho(t, z)dz$ on $[0,T]\times \T$. Then, $u(t,x)\in C^{(2+\gamma)/2, 2+\gamma}([0,T]\times \T)$, for a $0<\gamma\leq \beta$, is a classical solution of  
		\begin{align}
		\label{u-equation}
    \partial_t u(t,x) = \frac{1}{2} \Phi'(\nabla u) \Delta u - 2\alpha(x) \Phi(\nabla u),
\end{align}
and as a consequence the unique para-controlled solution of \eqref{u-equation}, certainly belonging to space $\LL_T^{3/2-}=C([0,T], \hat{C}^{3/2-}(\T))\cap C^{(3/2-)/2}([0,T], L^\infty(\T))$.

 Consequently, $\rho(t,x)=\nabla u(t,x)\in C^{(2+\gamma)/2, 1+\gamma}([0,T]\times \T)$.
\end{lemma}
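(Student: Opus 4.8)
The plan is to prove the three parts in sequence, obtaining each from the previous together with classical parabolic PDE theory and the para-controlled results cited from \cite{FHSX}, \cite{FX}.

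\textbf{Part 1 (a priori bounds).} First I would establish the two-sided bound $\rho_-\leq \rho(t,x)\leq\rho^+$ by a microscopic/probabilistic argument rather than a PDE maximum principle, since the good weak solution is a priori only weakly continuous. The idea is to use attractiveness: by Condition \ref{cond: initial measure}(e) the initial law $\mu_L^N$ is sandwiched between the stationary product measures $\Pam_{N,c_0}$ and $\Pam_{N,c_1}$, and by the basic coupling this stochastic ordering is preserved for all $t\geq 0$. Hence for a test function $G\geq 0$, $\langle G,\pi_t^N\rangle$ is stochastically dominated above and below by the corresponding quantities started from $\Pam_{N,c_i}$, whose hydrodynamic limits are the weak solutions started from the constant profiles $\rho_0\equiv\rho_\pm$. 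Since $\Phi$ is nondecreasing, those constant-profile solutions stay constant (they solve $\partial_t\rho=0$), so comparing with Theorem \ref{main thm} forces $\rho_-\leq\rho(t,x)\leq\rho^+$ a.e. The consequence for $\Phi'$ and $\Phi(\rho)/\rho$ is then immediate: $\Phi\in C^\infty$ is strictly increasing with $\Phi'>0$ on the compact interval $[\rho_-,\rho^+]$, giving $b_0\leq\Phi'(\rho(t,x))\leq b_1$, and $g_*\leq\Phi(\rho)/\rho\leq g^*$ follows from $g_*\rho\leq\Phi(\rho)\leq g^*\rho$ as noted after \eqref{Phi_eqn}.

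\textbf{Part 2(a) (Hölder regularity under $\alpha\in C(\T)$).} With the bound from Part 1 in hand, write $a(t,x):=\tfrac12\Phi'(\rho(t,x))$ and observe $\rho$ is a bounded weak solution of the divergence-form linear equation $\partial_t\rho=\partial_x\bigl(a\,\partial_x\rho\bigr)-2\partial_x(\alpha\Phi(\rho))$ with uniformly elliptic, bounded coefficients. Apply De Giorgi--Nash--Moser theory (e.g.\ the interior/global Hölder estimate for bounded weak solutions of such equations; since $\T$ is compact with no boundary there is no boundary regularity issue) to conclude $\rho\in C^{\gamma/2,\gamma}_{\mathrm{loc}}((0,T]\times\T)$ for some $\gamma>0$. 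To push regularity up to $t=0$ one uses the assumed initial regularity $\rho_0\in C^\beta(\T)$: represent $\rho$ via Duhamel against the heat semigroup after freezing coefficients, or invoke the parabolic Schauder/Hölder estimates for the Cauchy problem with Hölder initial data, to get $\rho\in C^{\gamma/2,\gamma}([0,T]\times\T)$ with $0<\gamma\leq\beta$. The driving term $\alpha\Phi(\rho)$ is continuous, which is exactly what the De Giorgi--Nash argument needs.

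\textbf{Part 2(b) (classical solution and para-controlled identification).} Now assume $\alpha\in C^\beta(\T)$ and $\rho_0\in C^{1+\beta}(\T)$. Set $u(t,x)=\int_0^x\rho(t,z)\,dz$; then $u$ solves \eqref{u-equation} at least weakly, and by Part 2(a) $\nabla u=\rho$ is already $C^{\gamma/2,\gamma}$ and bounded below by $\rho_->0$, so the coefficient $\tfrac12\Phi'(\nabla u)$ is Hölder in $(t,x)$ and the lower-order term $2\alpha\Phi(\nabla u)$ is $C^{\min(\beta,\gamma)}$ in $x$ and Hölder in $t$. This is a quasilinear (in fact, since $\nabla u$ is already known to be Hölder, effectively linear non-divergence) parabolic equation with Hölder coefficients and $C^{1+\beta}$ initial data, so parabolic Schauder theory (bootstrapping: once $u\in C^{1+\gamma',\cdots}$ the coefficients improve, then apply Schauder again) gives $u\in C^{(2+\gamma)/2,\,2+\gamma}([0,T]\times\T)$ for some $0<\gamma\leq\beta$, hence $u$ is a classical solution of \eqref{u-equation}. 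Finally, because $u$ is a bounded classical solution with $\nabla u\geq\rho_->0$, it satisfies the hypotheses under which the para-controlled solution of \eqref{u-equation} is constructed and shown unique in \cite{FHSX}, \cite{FX} (the assumptions there — bounds $g_*\leq\Phi(\rho)/\rho\leq g^*$, smoothness of $\Phi$ — hold by Section \ref{g assump section} and Part 1); a classical solution is in particular a para-controlled solution, so by uniqueness $u$ \emph{is} the para-controlled solution and lies in $\LL_T^{3/2-}$. The stated regularity of $\rho=\nabla u\in C^{(2+\gamma)/2,1+\gamma}$ follows by differentiating the Schauder estimate in $x$.

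\textbf{Main obstacle.} I expect the only genuinely delicate point to be Part 1, i.e.\ getting the two-sided bound on $\rho$ in a way compatible with the weak (non-smooth) formulation: one must be careful that the basic coupling comparison, proved at the microscopic level, passes correctly to the limit in the topology of Theorem \ref{main thm}, and that the "good weak solution" class of Definition \ref{weak_def} is exactly the one for which uniqueness in Theorem \ref{main thm} holds so that the sandwiching identifies $\rho$. The PDE regularity steps 2(a)--2(b) are then standard De Giorgi--Nash--Moser plus parabolic Schauder bootstrapping on the torus, and the para-controlled identification in 2(b) is a direct appeal to the cited hydrodynamic/SPDE works once the classical-solution property and the bounds on $\Phi(\rho)/\rho$ are secured.
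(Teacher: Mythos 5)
Your overall architecture matches the paper's proof: Part 1 by attractiveness and Condition \ref{cond: initial measure}(e), Part 2(a) by De Giorgi--Nash--Moser (the paper cites Theorem~VI.6.33 of Lieberman), and Part 2(b) by Schauder (the paper cites Theorem~V.5.14 of Lieberman) plus the observation that a classical solution is the unique para-controlled one. Parts~2(a) and 2(b) are fine.

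There is, however, a genuine error in your Part~1. You claim that the hydrodynamic limits of the processes started from the stationary measures $\Pam_{N,c_i}$ are ``the weak solutions started from the constant profiles $\rho_0\equiv\rho_\pm$,'' and that ``those constant-profile solutions stay constant (they solve $\partial_t\rho=0$).'' Both assertions fail. First, the fugacities $\phi_{k,N}$ of $\Pam_{N,c_i}$ are not constant (Lemma~\ref{lem: uniform bounds on phi max min} only bounds the ratio $\phi_{\max,N}/\phi_{\min,N}\leq C_1$), so the associated density profile is not a constant function. Second, a constant $\rho$ is \emph{not} a stationary solution of \eqref{eqn: with ave}: plugging in $\rho\equiv c$ gives $\partial_t\rho=-2\Phi(c)\,\nabla\alpha\neq 0$ whenever $\alpha$ is nonconstant. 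Thus your intended comparison with constant-profile PDE solutions does not go through. The paper avoids this entirely: after the dynamic sandwiching $\Pam_{N,c_0}\leq\mu_L^NS_t^N\leq\Pam_{N,c_1}$ (which uses only attractiveness and the \emph{stationarity} of $\Pam_{N,c_i}$, not any PDE claim), it makes the further \emph{static} comparisons $\Pam_{\rho_-}\ll\Pam_{N,c_0}$ and $\Pam_{N,c_1}\ll\Pam_{\rho_+}$ with translation-invariant i.i.d.\ product measures, evaluates the monotone observable $f(\eta)=\eta^{\theta N}(\lfloor Nu\rfloor)$, and uses the law of large numbers under $\Pam_{\rho_\pm}$ (not a hydrodynamic limit) to get the constants $\rho_\pm$ on the outer terms, while Theorem~\ref{main thm} identifies the middle term as $\frac{1}{2\theta}\int_{-\theta}^{\theta}\rho(t,u+v)\,dv$; Lebesgue differentiation in $\theta$ then yields the a.e.\ bound. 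Your strategy could be repaired along the same lines (or by observing that $\rho_\infty^{c_i}$, though not constant, is pointwise between $\rho_-$ and $\rho_+$), but as written the step is incorrect.
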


Here, 
$\hat{C}^\beta(\T):=B^\beta_{\infty,\infty}(\T, \R)$ is the spatial H\"older-Besov space equipped with the norm $\|\cdot\|_{\hat{C}^\beta}$; see page 62, Appendix A.1 in \cite{GIP} for the exact definition.  When $\beta\in (0,\infty)\setminus\N$, it is known that $\hat{C}^\beta(\T)=C^\beta(\T)$, the standard H\"older space used in \cite{lady} and \cite{Lieberman}, and also $C^\beta(\T) \subset \hat{C}^\beta(\T)$ when $\beta\in \N$; see page 62 in \cite{GIP} and page 99 in \cite{BCD11}.

Then, $C_T\hat{C}^\beta :=C([0,T], \hat{C}^{\beta}(\T))$ is the space of $\hat{C}^\beta$-valued continuous functions on $[0,T]$ with the supremum norm $\|f\|_{C_T\hat{C}^\beta}=\sup_{0\leq t\leq T}\|f(t)\|_{\hat{C}^\beta}$, and $C^{\beta/2}_TL^\infty:=C^{\beta/2}([0,T], L^\infty(\T))$ is the space of $\beta/2$-H\"older continuous functions from $[0,T]$ to $L^\infty(\T)$ with the semi-norm $\|f\|_{C^{\beta/2}_TL^\infty} = \sup_{0\leq s\neq t\leq T} \|f(t)-f(s)\|_{L^\infty(\T)}/|t-s|^{\beta/2}$.  The norm on $\mathcal{L}^\beta_T$ is $\|f\|_{\mathcal{L}^\beta_T} = \|f\|_{C_T\hat{C}^\beta} + \|f\|_{C^{\beta/2}_TL^\infty}$.

\begin{proof}[Proof of Lemma \ref{lem:smoothness}]		
		Let $S^N_t$ be the semigroup associated to $L$, generating the standard process \eqref{eqn: generator L}.  By assumption (A) and Condition \ref{cond: initial measure} (e), 
		for the attractive system, we have
    the stochastic ordering
		\begin{equation}
        \Pam_{\rho_-}\ll \Pam_{N, c_0}\leq \mu_L^N S^N_t 
        \leq \Pam_{N, c_1}\ll \Pam_{\rho_+}.
    \end{equation}
 Then, $E_{\Pam_{\rho_-}}[f] \leq
    E_{\mu_L^NS^N_t}[f] \leq
    E_{\Pam_{\rho_+}}[f]$ for any monotone function $f$ on $\N_0^{\T_N}$.

		Set $f(\eta) = \eta^{\theta N}(\lfloor Nu\rfloor)$.  The hydrodynamic limit (Theorem \ref{main thm}) gives that $\rho_- \leq \frac{1}{2\theta}\int_{-\theta}^\theta \rho(t, u+v)dv\leq \rho_+$.  Hence, $\rho_-\leq \rho(t, u)\leq \rho_+$ a.e. with respect to Lebesgue measure on $\R$ as desired.
		The bounds on $\Phi'(\rho)=\Phi(\rho)/\sigma^2(\rho)$ (cf. after \eqref{Phi_eqn}) and $\Phi(\rho)/\rho$ follow as $\Phi$ is strictly increasing, and $\Phi(\rho)>0$ and $\sigma^2(\rho)>0$ for $\rho>0$.

One may deduce Part 2 (a), as $\rho(t,x)$ is bounded by Part 1, and as $\alpha(x)$ and $\Phi(\rho)$ are bounded by assumption and Part 1 again, by the regularity Theorem VI.6.33 in \cite{Lieberman} for weak solutions. 

For Part 2 (b), that $u$ is a classical solution now follows via Theorem V.5.14 in \cite{Lieberman} as $\Phi'(\rho), \alpha\Phi(\rho)$, thought of as functions of $(t,x)$, are bounded and belong to $C^{\gamma/2, \gamma}([0,T]\times \T)$ for a $0<\gamma\leq \beta$ by Part 1 and Part 2 (a). [We note in passing that Theorems 6.1 and 6.4 in \cite{lady} are used in Section 2.1, pages 866-867 in \cite{FX} to deduce regularity of $\rho(t,x)$ under stronger assumptions on $\alpha (=\xi\in C^2(\T) \text{ in the notation there})$ and $\rho_0\in C^{2+\beta}(\T)$ for $\beta\in (0,1)$.]

Moreover, as $u$ is a classical solution of \eqref{u-equation}, we claim it is the unique para-controlled solution in $\LL_T^{3/2-}$.  Indeed, the assumptions on $\xi = \alpha \in C^\beta(\T)$
(in the notation of \cite{FHSX},  \cite{FX}) and $\rho_0\in C^{1+\beta}(\T)$, since $\beta>0>-1/2-$ and $1+\beta>1/2-$, satisfy the conditions in \cite{FHSX}, \cite{FX} for $u$ to be in $\LL_T^{\gamma}$ for all $\gamma\in (13/9, 3/2)$.  Since $u$ is classical, all terms in \eqref{u-equation} are well-defined as continuous functions and \eqref{u-equation} coincides with the fixed-point problem definition of the unique local-in-time para-controlled solution in Section 2 in \cite{FHSX}, extended to global-in-time in \cite{FX}.
\end{proof}

\begin{remark}\rm
We note the question of whether $u(t,x)$, corresponding to a good weak solution $\rho(t,x)$ without further assumptions, is
the unique para-controlled solution of \eqref{u-equation} in a space $\LL_T^\gamma$ is of interest and left for future investigation.
Part of the reason for the choice in Part 2(b) of Lemma \ref{lem:smoothness}, and later in Theorems \ref{singular hyd thm} and \ref{thm singular sde} to choose $\rho_0\in C^{1+\beta}(\T)$ and $\alpha\in C^\beta(\T)$ for $\beta>0$, or $\alpha = W'_\varepsilon\in C^{1/2-}(\T)$ is that $u(t,x)$ is guaranteed to be the para-controlled solution of \eqref{u-equation}. 
\end{remark}

\subsection{Singular hydrodynamic limit as $\varepsilon\downarrow 0$}
When $\alpha = W'_\varepsilon$, we now discuss the limit of $\rho=\rho^\varepsilon$ as $\varepsilon\downarrow 0$, which follows from \cite{FHSX}, \cite{FX}.

For the following result, we will specify enough smoothness of $\rho_0$ (the same initial condition for all $\rho^\varepsilon$) to fit into the framework of Part 2(b) of Lemma \ref{lem:smoothness}.  
Recall also 
that $\Phi$ is $C^\infty([0,\infty))$ and by (LG), (M) that $g_*\leq \Phi(\rho)/\rho\leq g^*$, which allow us to fit into the framework of \cite{FHSX}, \cite{FX}.

\begin{theorem}
\label{singular hyd thm}
 Fix $\rho_0\in C^{1+\beta}(\T)$ for 
$\beta>0$.
With respect to $\varepsilon>0$ and $\alpha = W'_\varepsilon\in C^{1/2 - }(\T)$ as in \eqref{alpha defn}, the solutions $\rho^\varepsilon$ of \eqref{eqn: with ave} with initial condition $\rho_0$ converge in probability, with respect to the probability measure $\PB_W$ governing $W$, to the solution $\rho^0$ of the SPDE
\begin{align*}
    \partial_t \rho^0 = \frac{1}{2}\Delta \left(\Phi(\rho^0)\right) - 2\nabla\left(W'\Phi(\rho^0)\right)
\end{align*}
in the space $\mathcal{L}^{1/2-}_T$, 
which is the gradient $\rho^0=\nabla u^0$ with respect to the unique para-controlled solution of $\partial_t u^0=\frac{1}{2}\Phi'(\nabla u^0)\Delta u^0 - W'\Phi(\nabla u^0)$.

In particular, $\rho^\varepsilon(t, x)$ converges uniformly to $\rho^0(t, x)$ for $(t,x)\in [0, T]\times \mathbb{T}$, in probability with respect to $\PB_W$.
\end{theorem}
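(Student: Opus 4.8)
The plan is to recognize that, once everything is rephrased in terms of the height function $u^\varepsilon(t,x)=\int_0^x\rho^\varepsilon(t,z)\,dz$, the statement is precisely the $\varepsilon\downarrow 0$ stability result of \cite{FHSX}, \cite{FX}, so the work left is to check that the regularized densities $\rho^\varepsilon$ (solutions of \eqref{eqn: with ave} with $\alpha=W'_\varepsilon$) genuinely fall inside the scope of that theory, uniformly in $\varepsilon$, and then to pass back from $u$ to $\rho=\nabla u$. First I would invoke Part~2(b) of Lemma~\ref{lem:smoothness}: since $\alpha=W'_\varepsilon\in C^{1/2-}(\T)$ lies in $C^{\beta'}(\T)$ for some $\beta'\in(0,1/2)$ and $\rho_0\in C^{1+\beta}(\T)$ with $\beta>0$, that part applies (with the H\"older exponent there taken to be $\min(\beta,\beta')$) and shows that $u^\varepsilon$ is a classical solution of \eqref{u-equation} with $\alpha=W'_\varepsilon$ and, simultaneously, the unique para-controlled solution of that equation in $\mathcal{L}_T^{3/2-}$. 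Part~1 of Lemma~\ref{lem:smoothness} gives the a priori bounds $\rho_-\le\rho^\varepsilon\le\rho^+$ with $\rho_\pm$ depending only on the constants $c_0,c_1$ of Condition~\ref{cond: initial measure} and not on $\varepsilon$; together with $\Phi\in C^\infty$ and $g_*\le\Phi(\rho)/\rho\le g^*$, this keeps the coefficients $\Phi'(\nabla u^\varepsilon)$ and $\Phi(\nabla u^\varepsilon)$ uniformly in the class for which the well-posedness theory of \cite{FHSX}, \cite{FX} is formulated. The limit $\rho^0$ is, by definition, $\nabla u^0$ with $u^0$ the unique para-controlled solution in $\mathcal{L}_T^{3/2-}$ of $\partial_t u^0=\tfrac{1}{2}\Phi'(\nabla u^0)\Delta u^0-W'\Phi(\nabla u^0)$, driven by the true spatial white noise $W'$.

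The heart of the argument, which I would import wholesale from \cite{FHSX}, \cite{FX}, is the continuity of the para-controlled solution map. In that construction $u$ is obtained by a fixed-point argument whose input is not $\alpha$ alone but a finite family of explicit functionals of $\alpha$ --- $\alpha$ itself viewed in $\hat{C}^{-1/2-}(\T)$ together with the iterated paraproduct and resonance terms built from it, each measured in its natural H\"older--Besov space --- and the map from this enhanced datum to $u$ is locally Lipschitz. Writing $\alpha=W'_\varepsilon=\tfrac{d}{dx}\bigl(W*\psi_\varepsilon\bigr)$ with $\psi_\varepsilon$ an approximate identity, the uniform convergence $W*\psi_\varepsilon\to W$ and standard mollification estimates give $W'_\varepsilon\to W'$ in $\hat{C}^{-1/2-}(\T)$, and \cite{FHSX}, \cite{FX} establish the corresponding convergence of the whole enhanced datum, in probability with respect to $\PB_W$, to that of $W'$; this is the main probabilistic input of the argument. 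Feeding these convergent data into the continuity of the solution map then yields $u^\varepsilon\to u^0$ in $\mathcal{L}_T^{3/2-}$, in probability with respect to $\PB_W$.

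To finish I would differentiate in space: $\rho^\varepsilon=\nabla u^\varepsilon\to\nabla u^0=\rho^0$, the convergence being recorded in the space $\mathcal{L}_T^{1/2-}$ exactly as in \cite{FX}. The stated uniform convergence is then immediate from a Besov embedding: for $\gamma\in(0,1)$ one has $\hat{C}^\gamma(\T)\hookrightarrow C(\T)$ continuously, hence $C([0,T],\hat{C}^{1/2-}(\T))\hookrightarrow C([0,T]\times\T)$, so convergence in $\mathcal{L}_T^{1/2-}$ forces $\rho^\varepsilon\to\rho^0$ uniformly on $[0,T]\times\T$, again in probability with respect to $\PB_W$.

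I expect the genuine obstacle to sit entirely inside the step imported from \cite{FHSX}, \cite{FX}: the probabilistic analysis of the resonant products --- a priori undefined because $W'\in C^{-1/2-}(\T)$ and $\Phi(\rho^0)\in C^{1/2-}(\T)$ have regularities summing only to $0$ --- together with the matching deterministic local Lipschitz estimates for the fixed-point map that make the passage to the limit uniform along the approximation. On the side of the present paper the only things to verify --- routine given Lemma~\ref{lem:smoothness} --- are that the regularized densities $\rho^\varepsilon$ produced by the microscopic dynamics coincide with the para-controlled solutions to which that theory applies, and that their defining data $(\rho_0,\ \alpha=W'_\varepsilon)$ meet the required smoothness and the $\varepsilon$-uniform bounds.
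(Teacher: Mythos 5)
Your proposal is correct and follows essentially the same route as the paper: invoke Part~2(b) of Lemma~\ref{lem:smoothness} to identify $u^\varepsilon$ as the unique para-controlled solution of the height equation, then apply the convergence theory of \cite{FHSX} (local-in-time, Theorem~1.1, Lemma~5.2) and \cite{FX} (global-in-time, Theorems~1.1, 1.3, Remark~1.2) to pass to the limit $u^\varepsilon\to u^0$ and hence $\rho^\varepsilon=\nabla u^\varepsilon\to\nabla u^0=\rho^0$. Your write-up is more explicit than the paper's citation-level proof about the $\varepsilon$-uniform a priori bounds, the enhanced-datum convergence driving the solution-map continuity, and the closing Besov embedding, but all of these are precisely the ingredients the paper relies on implicitly in its references.
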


\begin{proof} We give the relevant citations.
By Part 2(b) of Lemma \ref{lem:smoothness}, $\rho^\varepsilon$ is the gradient of the unique para-controlled solution of \eqref{u-equation}.  Then, Theorem \ref{singular hyd thm} follows by Theorem 1.1, Theorem 1.3 and Remark 1.2 in \cite{FX}, noting the noise $W'_\varepsilon \in C^{2-\gamma}(\T)$ for $\gamma \in (13/9, 3/2)$ and $\rho_0\in C^{1+\beta}(\T)$ and $1+\beta> 1/2$ thereby satisfying the assumptions in \cite{FX}, which shows existence/uniqueness of the para-controlled solution $\rho^0$ global-in-time, extending the local-in-time convergence shown in probability, Theorem 1.1 and Lemma 5.2 in \cite{FHSX}, to all times.
\end{proof}

\section{Results}
\label{ch:main_thm}

To state results for a tagged particle, we recall the interpretation of a diffusion $x_t$ on a torus as one `unwrapped' on $\R$ with periodic coefficients.
Let $\hat x_t=\hat x^z_t$ be an It\^{o} diffusion on $\R$
satisfying with respect to a probability space with admissible filtration the SDE
\begin{equation}
    d\hat x_t = b(t, \hat x_t) \,dt + \sigma(t, \hat x_t) \,dB_t ~~,\quad\quad \hat x_0 =z,
    \label{eq:diff_torus_sde_R}
\end{equation}
for $z\in \R$.
When the functions $b(t, \cdot)$ and $\sigma(t, \cdot)$ are periodic with period $1$ for all $t \in [0, T]$, we 
may write
\begin{align*}
\hat x_t -z= \int_0^t b(s, \hat x_s - \lfloor \hat x_s\rfloor)ds + \int_0^t \sigma(s, \hat x_s - \lfloor \hat x_s\rfloor)dB_s.
\end{align*}

We may understand the diffusion $x_t=x^Z_t$ on the torus $\T=(0,1]$ (with the usual distance $d(a,b)=\min\{|a-b|, 1-|a-b|\}$), satisfying
$$dx_t = b(t, x_t)dt + \sigma(t, x_t)dB_t, \quad\quad x_0\stackrel{d}{=} Z$$  
for $t\geq 0$, where $Z$ is a random variable, via the mixture relation
$x_t = \hat x^Z_t - \lfloor \hat x^Z_t\rfloor\in \T$, with $z=Z$, whose process measure is $E_Z[\hat{\mathcal{P}}^z]=\int \hat{\mathcal{P}}^zdP(Z\in dz)$, where $\hat{\mathcal{P}}^z$ is the process measure of $\hat x^z_t$.

 If there is a unique weak solution $\hat x^z$ for each $z\in \R$, 
 then the process measure of $\hat x^Z_t$ is uniquely given as $E_Z[\hat{\mathcal{P}}^z]$
 (cf. Proposition 1 and remark following in \cite{Kallenberg}).  Correspondingly, in this situation,
 \begin{equation}
 \label{process measure}
 \text{ the process measure of}\ x^Z_t\ \text{is} \ E_Z[\mathcal{P}^z]
 \end{equation}
  where $\mathcal{P}^z$ is the process measure of $\hat x^z_t - \lfloor \hat x^z_t\rfloor$ for a fixed $z$.

Our first main result identifies a diffusion limit for $\frac{1}{N}X^N_t=\frac{1}{N}X_{N^2t}$ with respect to an $\alpha_\cdot^N$ environment, when the initial condition has some smoothness to guarantee smoothness of $\rho(t,x)$ (cf. Lemma \ref{lem:smoothness}).

\begin{theorem}
\label{thm alpha tg}
    Suppose $\alpha\in C(\T)$ and $\rho_0\in C^\beta(\T)$ for $\beta>0$.  Let the initial measures $\mu^N$ of the process $(X^N_t, \eta_t)$ satisfy Condition \ref{cond: initial measure}.
		Then, $\frac{1}{N} X^N_{t}$ for $t\in [0,T]$ converges in distribution in the uniform topology to a diffusion $x_t$ on $\T$ given by the unique weak solution of the SDE
    \begin{equation}
        dx_t = 2\frac{\Phi(\rho(t, x_t))}{\rho(t, x_t)} \,\alpha(x_t) \,dt
        +
        \sqrt{ \frac{\Phi(\rho(t, x_t))}{\rho(t, x_t)} } \,dB_t ~~~,~~~ x_0 \stackrel{d}{=} Z_X,
        \label{eq:thm:sde:alpha}
    \end{equation}
		whose process measure is given by \eqref{process measure}, 
    where $B$ is a standard Brownian motion, $\rho(t,u)$ is the hydrodynamic density specified in \eqref{eqn: with ave}, and $Z_X$ is the limit in distribution of $\frac{1}{N}X^N_0$.
 \end{theorem}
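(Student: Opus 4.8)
The first step is the martingale decomposition. Fix $G\in C^2(\T)$ (or, after lifting to $\R$ as in the discussion preceding \eqref{process measure}, take $G(x)=x$ and $G(x)=x^2$ with periodic coefficients). Applying Dynkin's formula to $(x,\xi)\mapsto G(x/N)$ under $N^2\LL_N$ — where only the tagged‐particle jump terms of \eqref{eq:gen} contribute — and Taylor expanding, one gets that
\[
\mathcal{M}^{G,N}_t := G(X^N_t/N)-G(X^N_0/N)-\int_0^t \frac{g(\eta_s(X^N_s))}{\eta_s(X^N_s)}\Big(2\alpha^N_{X^N_s}\,\nabla^N G(X^N_s/N)+\tfrac12\Delta^N G(X^N_s/N)\Big)\,ds
\]
is a mean–zero martingale with $\langle \mathcal{M}^{G,N}\rangle_t=\int_0^t \frac{g(\eta_s(X^N_s))}{\eta_s(X^N_s)}\big(\nabla^N G(X^N_s/N)\big)^2\,ds$, up to $O(1/N)$ errors uniform in $t$, and one computes the cross–variation of two such martingales similarly. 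Since $(LG)$, $(M)$ give $g_*\le g(j)/j\le g^*$ for $j\ge 1$ and $\{\alpha^N_k\}$ is uniformly bounded, the drift integrand is bounded and $\langle\mathcal{M}^{G,N}\rangle_t\le Ct$; Burkholder–Davis–Gundy for the martingale part together with the Lipschitz‐in‐time bound for the drift yield a Kolmogorov–Chentsov estimate, hence tightness of $\{\mathrm{law}(\tfrac1N X^N_\cdot)\}_N$. As the jumps of $\tfrac1N X^N_\cdot$ are of size $1/N$, every subsequential limit is a continuous process, and by Condition \ref{cond: initial measure}(c) it has initial law $Z_X$.

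The core is the identification of limit points, which is where the replacement Lemma \ref{replacement-lemma} enters. It allows one to replace, inside the time integrals above, the a priori unbounded local function $g(\eta_s(X^N_s))/\eta_s(X^N_s)$ by $H(\rho(s,X^N_s/N))$ with $H(\rho)=\Phi(\rho)/\rho$ (cf.\ \eqref{Phi_eqn}): the $\log N$–truncation Lemma \ref{loc trun lemma}, valid by attractiveness and Condition \ref{cond: initial measure}(e), controls the local function prior to the replacement, and Condition \ref{cond: initial measure}(d) supplies the entropy bound for the one‐block, two‐block and global estimates. Crucially, by Lemma \ref{lem:smoothness}, Part 2(a), the solution $\rho(t,x)$ of \eqref{eqn: with ave} is jointly continuous and, by Part 1, bounded above and below, so $H(\rho(\cdot,\cdot))$ is bounded and continuous; likewise $\alpha^N_{X^N_s}\to\alpha(X^N_s/N)$ uniformly and $\nabla^N G,\Delta^N G\to G',G''$. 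Hence
\[
\mathcal{M}^{G,N}_t = G(X^N_t/N)-G(X^N_0/N)-\int_0^t H(\rho(s,X^N_s/N))\Big(2\alpha(X^N_s/N)G'(X^N_s/N)+\tfrac12 G''(X^N_s/N)\Big)\,ds + r_N(t),
\]
with $r_N(t)\to0$ in $\P_N$–probability, and $\langle\mathcal{M}^{G,N}\rangle_t=\int_0^t H(\rho(s,X^N_s/N))\big(G'(X^N_s/N)\big)^2\,ds+o(1)$. Along a convergent subsequence $\tfrac1N X^N_\cdot\Rightarrow x_\cdot$, the integrands are bounded continuous functionals of the path, so the continuous mapping theorem and uniform integrability from the moment bounds above pass the martingale property to the limit: for every $G$,
\[
G(x_t)-G(x_0)-\int_0^t\Big(2H(\rho(s,x_s))\alpha(x_s)G'(x_s)+\tfrac12 H(\rho(s,x_s))G''(x_s)\Big)\,ds
\]
is a continuous martingale with quadratic variation $\int_0^t H(\rho(s,x_s))\big(G'(x_s)\big)^2\,ds$ with respect to its own filtration. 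That is, $x_\cdot$ solves the martingale problem associated with \eqref{eq:thm:sde:alpha} (read on the lift to $\R$ with $1$–periodic coefficients), hence is a weak solution.

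Finally I would establish weak uniqueness for \eqref{eq:thm:sde:alpha}. By Lemma \ref{lem:smoothness} the coefficient $a(t,x):=H(\rho(t,x))=\Phi(\rho(t,x))/\rho(t,x)$ satisfies $0<b_0\le a(t,x)\le b_1<\infty$ and, since $\rho\in C^{\gamma/2,\gamma}$, $\Phi\in C^\infty$ and $\rho$ is bounded away from $0$, it is H\"older continuous in $(t,x)$; the drift $2a(t,x)\alpha(x)$ is bounded and continuous. The one‐dimensional time‐inhomogeneous martingale problem for such coefficients is well posed (Stroock–Varadhan; alternatively remove the bounded drift by Girsanov and invoke one‐dimensional weak uniqueness for a uniformly elliptic, H\"older diffusion coefficient, or use the scale/time‐change representation). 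Weak uniqueness for each deterministic starting point $z\in\R$, combined with the mixture relation \eqref{process measure} over the limiting initial law $Z_X$, upgrades the subsequential convergence to convergence of the full sequence $\tfrac1N X^N_\cdot\Rightarrow x_\cdot$ in the uniform topology, with the asserted process measure. The main obstacle — granting the replacement Lemma \ref{replacement-lemma}, which is proved separately in the paper — is the identification step: controlling the \emph{joint} behaviour of $\tfrac1N X^N_\cdot$ and the particle configuration seen from the tagged particle, deploying attractiveness and the $\log N$–truncation to tame the singular local function $g(\eta)/\eta$, and exploiting the continuity of $\rho(t,x)$ (unavailable from \cite{LPSX} alone, whence the hypothesis $\rho_0\in C^\beta(\T)$) to close the limiting equation.
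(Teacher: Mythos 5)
Your plan reaches the same conclusion and rests on the same ingredients (martingale decomposition, tightness, the replacement Lemma \ref{replacement-lemma}, boundedness and continuity of $\rho$ from Lemma \ref{lem:smoothness}, and weak uniqueness), but the identification step runs along a genuinely different route. The paper works directly with the position: it writes $\frac{1}{N}X^N_t - \frac{1}{N}X^N_0 = 2\int_0^t \frac{g(\eta_s(X^N_s))}{\eta_s(X^N_s)}\alpha^N_{X^N_s}\,ds + \frac{1}{N}M^N_t$ via jump-counting processes and their compensators, proves tightness of the joint vector $\mathcal{X}_N=(\pi^N,\tau_{X^N}\pi^N,\frac{1}{N}X^N,\frac{1}{N}M^N,\langle\frac{1}{N}M^N\rangle)$, converges the martingale by Revuz--Yor IX.1.12 and VI.6.29, and then invokes Doob's martingale representation theorem to produce a driving Brownian motion $B$ with $m_t=\int_0^t\sqrt{H(\rho(s,x_s))}\,dB_s$. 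You instead apply Dynkin's formula to test functions $G(x/N)$, Taylor-expand, and show (after replacement) that every subsequential limit of $\frac{1}{N}X^N_\cdot$ solves the time-inhomogeneous martingale problem for the generator of \eqref{eq:thm:sde:alpha}; Stroock--Varadhan well-posedness then identifies the law. Both are valid; your martingale-problem route is somewhat cleaner on the uniqueness side and avoids explicitly constructing the Doob Brownian motion, while the paper's scheme handles the joint convergence (in particular of the shifted empirical measure $\tau_{X^N}\pi^N$) explicitly, which is needed to close the replacement. That closing step is the one place you gloss over: Lemma \ref{replacement-lemma} only replaces $g(\eta_s(X^N_s))/\eta_s(X^N_s)$ by $\frac{1}{\epsilon N}\sum_{x}\overline{H_{\ell}}(\eta_s^{\theta N}(X^N_s+x))$, and passing from that block average to $H(\rho(s,x_s))$ still requires the hydrodynamics Theorem \ref{main thm}, the joint convergence of $(\frac{1}{N}X^N,\pi^N)$ (hence tightness of $\tau_{X^N}\pi^N$, which you do not mention), and the continuity of $\rho$, taking $N\to\infty$, then $\theta\to 0$, then $\ell\to\infty$ in that order (cf.\ Claim \#3 of the paper's Theorem \ref{thm:main_2}). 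With that made explicit, your argument is sound.
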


\begin{remark}
\label{more smoothness rmk}
\rm
We comment, with more smoothness, say $\alpha(\cdot)\in C^{\beta'}(\T)$ for $\beta'\geq 1$ (and therefore Lipschitz) and $\rho_0\in C^{1+\beta''}(\T)$ for $\beta''>0$,
we have $\rho(t,u)\in C^{(2+\gamma)/2, 1+\gamma}([0,T]\times \T)$ for a $0<\gamma\leq \min\{\beta', \beta''\}$ by Part 2 (b) of Lemma \ref{lem:smoothness}.
Then, the SDE above would have bounded, Lipschitz coefficients, and $x_t$ would be the unique pathwise solution of \eqref{eq:thm:sde:alpha}, implying strong existence by the Yamada-Watanabe theorem (cf. Theorem IX.1.7 in \cite{revuz2004continuous}).
\end{remark}

Next, as a consequence, we state quenched and annealed results.  Recall the quenched random environment formulation in Section \ref{quenched-section}.

\begin{corollary}
\label{cor-tg-quenched}
Consider the seting of Theorem \ref{thm alpha tg} where $\rho_0\in C^\beta(\T)$ for 
$\beta>0$ and now $\alpha = W'_\varepsilon\in C^{1/2 - }(\T)$ as in \eqref{alpha defn} with respect to $\varepsilon>0$.  Then, under a quenched random environment $\omega$, we have $\frac{1}{N}X^N_{t}$ for $t\in [0,T]$ converges in distribution in the uniform topology to the unique weak solution $x^\varepsilon_t=x_t$ on $\T$ satisfying
\begin{equation}
        dx_t = 2\frac{\Phi(\rho(t, x_t))}{\rho(t, x_t)} \,W'_\varepsilon(x_t) \,dt
        +
        \sqrt{ \frac{\Phi(\rho(t, x_t))}{\rho(t, x_t)} } \,dB_t, \quad\quad x_0\sim Z_X,
       \label{eq:thm:sde}
    \end{equation}
    where $\rho=\rho^\varepsilon$ is the hydrodynamic density specified in Theorem \ref{main_Cor} with initial condition $\rho_0$.
    
    Moreover, under the annealed measure $\E_{\PB_W}\P^N$, the law of $\frac{1}{N}X^N_t$ converges weakly to the law of $x^\varepsilon_t$ under $\PB_W$, the law of $W$.
\end{corollary}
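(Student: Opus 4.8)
The plan is to read off the quenched statement from Theorem \ref{thm alpha tg} together with the quenched hydrodynamics of Theorem \ref{main_Cor}(I), and then to bootstrap the annealed statement from the quenched one by a bounded-convergence argument. First I would work on the Skorokhod coupling space $(\Omega,\FF,\Pa)$ of Section \ref{quenched-section} and fix $\omega$ in the full-$\Pa$-measure set on which $W^N_\cdot(\omega)$ converges uniformly to $W_\cdot(\omega)$. The only thing to verify for such $\omega$ is the hypothesis of Theorem \ref{thm alpha tg}: that the linear interpolation of the deterministic disorder $\alpha^N_k=\sqrt N q^N_k$ converges uniformly on $\T$ to $\alpha=W'_\varepsilon$. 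Starting from the explicit formula \eqref{eqn: q and W}, $\sqrt N q^N_k$ is a Riemann-type approximation of $\tfrac{d}{dx}(W*\psi_\varepsilon)$ assembled from the values of $W^N$; since $W^N_\cdot(\omega)\to W_\cdot(\omega)$ uniformly, $W$ is uniformly continuous on the compact torus, and $\psi\in C^1$ (so its difference quotients are uniformly close to $\psi'$), the pointwise limit \eqref{q_conv} upgrades to uniform convergence, treating the two boundary terms of \eqref{eqn: q and W} the same way; the bound $\limsup_N\max_k\sqrt N|q^N_k|\le C(\omega)/\varepsilon$ noted after \eqref{q_conv} is then also in force.

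Since $W'_\varepsilon\in C^{1/2-}(\T)\subset C(\T)$, $\rho_0\in C^\beta(\T)$ with $\beta>0$, and $\mu^N$ satisfies Condition \ref{cond: initial measure} (none of which is affected by the choice $\alpha=W'_\varepsilon$), Theorem \ref{thm alpha tg} applies and yields that $\tfrac1N X^N_\cdot$ converges in distribution on $C([0,T],\T)$ to the unique weak solution of \eqref{eq:thm:sde}; by Theorem \ref{main_Cor}(I) the density appearing there is precisely the quenched hydrodynamic density $\rho^\varepsilon$ with initial datum $\rho_0$, and by Lemma \ref{lem:smoothness} (Parts 1 and 2(a)) the coefficient $\Phi(\rho^\varepsilon(t,x))/\rho^\varepsilon(t,x)$ is continuous and bounded above and away from zero, so the SDE is indeed of the form covered by that theorem. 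This gives the quenched part.

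For the annealed part I would use the standard fact that quenched a.s.\ weak convergence, tested against a bounded continuous functional, passes to the annealed law. Let $E=C([0,T],\T)$ with the uniform metric; for $\omega$ in the good set let $\mu^\omega_N$ be the law of $\tfrac1N X^N_\cdot$ under $\P^\omega_N$ and $\mu^\omega$ the law of $x^\varepsilon_\cdot=x^\varepsilon_\cdot(\omega)$, noting that $\mu^\omega$ depends on $\omega$ only through the Brownian path $W(\omega)$ since $x^\varepsilon$ is built from $\rho^\varepsilon$ and $W'_\varepsilon$. The quenched part gives $\mu^\omega_N\Rightarrow\mu^\omega$ for $\Pa$-a.e.\ $\omega$. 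Fix $F\in C_b(E)$. The map $\omega\mapsto\int_E F\,d\mu^\omega_N=\E_{\P^\omega_N}[F(\tfrac1N X^N_\cdot)]$ is measurable (the rates $\tfrac12\pm\alpha^N_x/N$ depend measurably on $\omega$, hence so does the law of the finite-state, finite-rate jump chain), bounded by $\|F\|_\infty$, and converges $\Pa$-a.s.\ to $\int_E F\,d\mu^\omega$; by bounded convergence, $\int_\Omega(\int_E F\,d\mu^\omega_N)\,d\Pa\to\int_\Omega(\int_E F\,d\mu^\omega)\,d\Pa$. The left-hand side is the $F$-expectation of $\tfrac1N X^N_\cdot$ under the annealed measure $\E_{\PB_W}\P^N$, and, since $\mu^\omega$ is a function of $W(\omega)$ alone, the right-hand side is the $F$-expectation under the annealed law of $x^\varepsilon_\cdot$, i.e.\ its law under $\PB_W$. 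As $F\in C_b(E)$ was arbitrary, this is the claimed weak convergence, and the fixed-time assertion follows by the continuous mapping theorem.

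The corollary is essentially a packaging of Theorem \ref{thm alpha tg} and Theorem \ref{main_Cor}, so I do not anticipate a genuine obstacle. The one point that needs real care is the quenched upgrade from the pointwise convergence $\sqrt N q^N_k\to W'_\varepsilon(x)$ along $k/N\to x$ to \emph{uniform} convergence of the interpolated disorder, which is exactly what Theorem \ref{thm alpha tg} demands; this is where a.s.\ uniform convergence $W^N\to W$, uniform continuity of $W$ on $\T$, and the $C^1$-smoothness of $\psi$ are used, with a little attention to the two endpoint terms in \eqref{eqn: q and W}. The measurability of $\omega\mapsto\int_E F\,d\mu^\omega_N$ and the identification of the annealed limit are routine once everything is realized on the Skorokhod coupling space.
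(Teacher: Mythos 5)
Your proof takes essentially the same route as the paper: deduce the quenched statement directly from Theorem \ref{thm alpha tg} (with $\rho=\rho^\varepsilon$ identified via Theorem \ref{main_Cor}(I)), then pass to the annealed statement by testing against bounded continuous functionals and applying bounded convergence. The only extra content you add is an explicit verification that the interpolated disorder $\sqrt{N}q^N_k$ converges \emph{uniformly} to $W'_\varepsilon$ on the quenched full-measure event and that $\omega\mapsto\E_{\P^\omega_N}[F(\tfrac1N X^N_\cdot)]$ is measurable — both correct and worth recording, but already implicit in the paper's setup in Section \ref{quenched-section}, so this is a spelled-out version of the same argument rather than a different one.
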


 \begin{proof}
The quenched part follows from Theorem \ref{thm alpha tg}.  The annealed result follows straightforwardly from the quenched one:  If $\E^N[F(\tfrac{1}{N}X^N_\cdot)] \rightarrow E[F(x^\varepsilon_\cdot)]$ for bounded, continuous $F:D([0,1], \T)\rightarrow \R$, then $\E_{\PB_W}\E^N[F(\tfrac{1}{N}X^N_\cdot)]\rightarrow \E_{\PB_W}E[F(x^\varepsilon_\cdot)]$, by bounded convergence.
\end{proof}

We now consider the limit of the diffusions $x^\varepsilon_t$ as $\varepsilon\downarrow 0$.
Consider the diffusion $\hat x^0_t = \hat x^{0,z}_t$ on $\R$, formally given with periodic drift and diffusion coefficient by
 \begin{equation*}
        d\hat x^0_t = 2\frac{\Phi(\rho^0(t, \hat x^0_t))}{\rho^0(t, \hat x^0_t)} \,W'(\hat x^0_t) \,dt
        +
        \sqrt{ \frac{\Phi(\rho^0(t, \hat x^0_t))}{\rho^0(t, \hat x^0_t)} } \,dB^0_t, \quad\quad \hat x^0_0 =z
    \end{equation*}
    where $z\in \R$.
More carefully, as with Brox diffusion, we specify it in terms of scale:  Namely, for $t\geq 0$, define
		$\hat x_t^0 = s_0^{-1}\big(s_0(z)+B^0(T_0^{-1}(t))\big)$
		 on $\R$ where 
		\begin{align}
		     \label{prop:s0_lim1}
        s_0(x) &:= \int_0^x \exp \Big(-4 \left\{ \lfloor y \rfloor W(1) + W(y - \lfloor y \rfloor) \right\} \Big)\,dy\\
    T_0(t) &:= \int_0^t \exp\Big( 8 \big\{ \lfloor s_0^{-1}(s_0(z)+B^0_r) \rfloor W(1) + W\big(s_0^{-1}(s_0(z)+B^0_r) - \lfloor s_0^{-1}(s_0(z)+B^0_r) \rfloor \big) \big\} \Big) \nonumber\\
    &\quad\quad\quad\quad\quad\quad\quad\quad \times \frac{1}{\chi_0(t, s_0^{-1}(s_0(z)+B^0_r))} \,dr.\nonumber
\end{align}
Here,
$B^0$ is a standard Brownian motion and
$\chi_0(t,x) = \Phi(\rho^0(t,x))/\rho^0(t,x)$.

We now impose a bit more smoothness on $\rho_0$ to guarantee that $u(t,x)$ solving \eqref{u-equation} with $\alpha = W'_\varepsilon$ is a classical solution and therefore a para-controlled solution, so as to access the para-controlled limits in \cite{FHSX}, \cite{FX}.

\begin{theorem}
\label{thm singular sde}
Consider the setting of Corollary \ref{cor-tg-quenched}.  Let now $\rho_0\in C^{1+\beta}(\T)$ where $\beta>0$.
Let also $x^0_0 \stackrel{d}{=} Z_X$.
 Define the process $x^0_t$ for $t\in [0,T]$ on $\T$ as a mixture $x^0_t = \hat x^{0,Z_X}_t - \lfloor \hat x^{0,Z_X}_t\rfloor$, where $z=Z_X$.
    We have that the diffusions $x_t^\varepsilon$ on $\T$, specified in \eqref{eq:thm:sde}, converge in distribution to $x_t^0$ as $\varepsilon \rightarrow 0$ in $\PB_W$-probability.

 Moreover, as a consequence, the laws of $x_t^\varepsilon$ under the annealed measure $\PB_W$ converge in distribution to the law of $x^0_t$ under $\PB_W$ as $\varepsilon\rightarrow 0$.
\end{theorem}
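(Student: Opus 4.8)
The plan is to represent $x^\varepsilon_t$ in It\^{o}--McKean form via scale and time-change functions $s^\varepsilon$, $T^\varepsilon$ built from $W'_\varepsilon$ and $\rho^\varepsilon$, and then pass to the limit as $\varepsilon\downarrow 0$ using the uniform convergence $\rho^\varepsilon \to \rho^0$ from Theorem \ref{singular hyd thm} together with the explicit form of the $\varepsilon\to 0$ limits of $s^\varepsilon$, $T^\varepsilon$. First I would fix a realization of $W$ for which $\rho^\varepsilon(t,x)\to\rho^0(t,x)$ uniformly on $[0,T]\times\T$ (Theorem \ref{singular hyd thm} gives this in $\PB_W$-probability, so it holds along subsequences almost surely; it suffices to argue quenched convergence and then invoke a subsequence argument). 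By Corollary \ref{cor-tg-quenched}, $x^\varepsilon$ is the unique weak solution of \eqref{eq:thm:sde} on $\T$; lifting to $\R$, $\hat x^\varepsilon$ satisfies the SDE with periodic coefficients $b^\varepsilon(t,y) = 2\chi_\varepsilon(t, y-\lfloor y\rfloor)W'_\varepsilon(y-\lfloor y\rfloor)$ and $\sigma^\varepsilon(t,y)^2 = \chi_\varepsilon(t, y-\lfloor y\rfloor)$, where $\chi_\varepsilon(t,x)=\Phi(\rho^\varepsilon(t,x))/\rho^\varepsilon(t,x)$. Since $\rho^\varepsilon$ is a classical solution by Part 2(b) of Lemma \ref{lem:smoothness} and $W'_\varepsilon$ is smooth, the coefficients are regular enough that $\hat x^\varepsilon$ admits the It\^{o}--McKean representation $\hat x^\varepsilon_t = s_\varepsilon^{-1}(s_\varepsilon(z)+B_{T_\varepsilon^{-1}(t)})$, with $s_\varepsilon(x)=\int_0^x \exp(-\int_0^y 2 b^\varepsilon(\cdot,r)/\sigma^\varepsilon(\cdot,r)^2\,dr)\,dy = \int_0^x \exp(-4\{\lfloor y\rfloor W_\varepsilon(1)+W_\varepsilon(y-\lfloor y\rfloor)\})\,dy$ where $W_\varepsilon = W*\psi_\varepsilon$ (the time-dependence of $\chi_\varepsilon$ cancels in the exponent since $b^\varepsilon/\sigma_\varepsilon^2 = 2W'_\varepsilon$ is $t$-independent), and $T_\varepsilon(t)=\int_0^t \exp(8\{\cdots\})\chi_\varepsilon(r, \cdot)^{-1}\,dr$ evaluated along the scaled Brownian path. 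This is exactly the structure of the $\sigma$-Brox representation generalized by a time-inhomogeneous multiplicative factor.

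Next I would establish the convergence $s_\varepsilon\to s_0$, $T_\varepsilon\to T_0$ locally uniformly. Because $W*\psi_\varepsilon\to W$ uniformly on compacts (continuity of $W$, $\int\psi=1$), the integrand defining $s_\varepsilon$ converges uniformly on compacts to that of $s_0$ in \eqref{prop:s0_lim1}, so $s_\varepsilon\to s_0$ in $C^1_{loc}(\R)$; since $s_\varepsilon$, $s_0$ are strictly increasing homeomorphisms of $\R$, $s_\varepsilon^{-1}\to s_0^{-1}$ locally uniformly. For $T_\varepsilon$, one combines the convergence of the exponential factor with $\chi_\varepsilon(t,x)\to\chi_0(t,x)$ uniformly — which follows from $\rho^\varepsilon\to\rho^0$ uniformly, the smoothness of $\Phi$, and the uniform lower bound $\rho^\varepsilon\geq\rho_-$ from Lemma \ref{lem:smoothness} Part 1 keeping $\chi_\varepsilon$ bounded away from $0$ and $\infty$ — and the fact that the composition $s_\varepsilon^{-1}(s_\varepsilon(z)+B^0_r)$ converges uniformly on $[0,T']$ for each fixed Brownian path $B^0$ by continuity of $s_0^{-1}$ and the compactness of $\{B^0_r : r\leq T'\}$. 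Thus $T_\varepsilon\to T_0$ uniformly on compacts for a.e.\ $B^0$, and likewise $T_\varepsilon^{-1}\to T_0^{-1}$; assembling these gives $\hat x^\varepsilon_t\to\hat x^0_t$ uniformly on $[0,T]$, a.s.\ in the Brownian coupling. Wrapping back to $\T$ via $x\mapsto x-\lfloor x\rfloor$ (continuous except on a Lebesgue-null set, which $\hat x^0$ visits with probability zero, as $\hat x^0$ is a time-changed Brownian motion with nondegenerate speed) gives $x^\varepsilon_t\to x^0_t$ in the uniform topology; since the initial law $Z_X$ is the same for all $\varepsilon$, the mixture \eqref{process measure} passes to the limit and yields convergence in distribution of $x^\varepsilon$ to $x^0$ in $\PB_W$-probability. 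The final annealed assertion follows from bounded convergence exactly as in the proof of Corollary \ref{cor-tg-quenched}: for bounded continuous $F$ on path space, $\E[F(x^\varepsilon_\cdot)]\to\E[F(x^0_\cdot)]$ in $\PB_W$-probability and is bounded, hence $\E_{\PB_W}\E[F(x^\varepsilon_\cdot)]\to\E_{\PB_W}\E[F(x^0_\cdot)]$.

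The main obstacle, I expect, is the time-change convergence $T_\varepsilon^{-1}\to T_0^{-1}$ and, relatedly, making the It\^{o}--McKean representation rigorous uniformly in $\varepsilon$: the factor $\chi_\varepsilon(r, s_\varepsilon^{-1}(s_\varepsilon(z)+B^0_r))$ inside $T_\varepsilon$ couples the (converging but $\varepsilon$-dependent) density, the (converging) scale function, and the Brownian path in a nested composition, so the uniform convergence must be propagated carefully through this composition — one needs the uniform bounds $\rho_-\leq\rho^\varepsilon\leq\rho^+$ (Lemma \ref{lem:smoothness}) to control $\chi_\varepsilon$ and $T_\varepsilon'$ uniformly from above and below, ensuring $T_\varepsilon$ are equi-Lipschitz with equi-bounded-below derivative so that inversion is stable. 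A secondary technical point is confirming that the quenched convergence $\rho^\varepsilon\to\rho^0$ holds along a single almost-sure subsequence simultaneously with the validity of the Brox-type representation, so that the "in $\PB_W$-probability" statements can be deduced by a standard subsequence-of-every-subsequence argument; but this is routine once the pathwise convergence machinery above is in place.
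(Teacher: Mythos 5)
Your proposal follows the same route as the paper: It\^{o}--McKean representation via scale function $s_\varepsilon$ and time-change $T_\varepsilon$ (with the observation that $2b^\varepsilon/\sigma_\varepsilon^2 = 4W'_\varepsilon$ is $t$-independent so the density cancels in the scale exponent), convergence of $s_\varepsilon$, $s_\varepsilon^{-1}$ and $T_\varepsilon$ on compacts driven by the uniform convergence $\rho^\varepsilon\to\rho^0$ from Theorem \ref{singular hyd thm} and the a priori bounds of Lemma \ref{lem:smoothness}, a subsequence argument to upgrade $\PB_W$-probability convergence to pathwise, and bounded convergence for the annealed claim. The only step you gloss over that the paper treats explicitly is showing $\langle s_\varepsilon(\hat x^\varepsilon)\rangle_\infty=\infty$ so the Dambis--Dubins--Schwarz Brownian motion is genuine, but that is a minor technical point and does not change the structure of the argument.
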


We will prove the quenched part of Theorem \ref{thm singular sde} in 
Section \ref{sec:varep-to-0}. The annealed statement follows as a consequence as in the proof of Corollary \ref{cor-tg-quenched}.

\begin{remark}\label{bridge rmk}
\rm
We comment,
with respect to a Brownian-bridge environment (see after \eqref{alpha defn}), where the disorder $a^N_k = \alpha^N_k - \frac{1}{N}\sum_{j\in \T_N}\alpha^N_j$ and $a(u)=W'_\varepsilon(u)-\int_\T W'_\varepsilon(v)dv$ has the same smoothness as $\alpha(u)=W'_\varepsilon$, one may deduce the corresponding version of Corollary \ref{cor-tg-quenched} from Theorem \ref{thm alpha tg}, with $a(\cdot)$ in place of $\alpha(\cdot)$.  

Also, the corresponding version of singular hydrodynamic limit Theorem \ref{singular hyd thm}, in terms of para-controlled solution $\rho^{a,0}(t,x)$, holds with respect to $a(\cdot)$ and formal limit $W'(\cdot) - W(1)$ in place of $\alpha(\cdot)$ and $W'$.  Then, a corresponding singular diffusion limit Theorem \ref{thm singular sde} holds with
$s_{a,0}$, $T_{a,0}$ and $\rho^{a,0}$ in place of $s_0$, $T_0$ and $\rho^0$ where
\begin{align*}
s_{a,0}(x)&:= \int_0^x \exp\Big (-4W(y-\lfloor y\rfloor) + 4W(1)(y-\lfloor y\rfloor)\Big)dy\\
T_{a,0}(t)&:= \int_0^t \exp\Big(8W(s_{a,0}^{-1}(s_{a,0}(z)+ B^0_r) - \lfloor s_{a,0}^{-1}(s_{a,0}(z)+ B^0_r)\rfloor)\\
&\quad\quad\quad -8 W(1)(s_{a,0}^{-1}(s_{a,0}(z)+ B^0_r) - \lfloor s_{a,0}^{-1}(s_{a,0}(z)+ B^0_r)\rfloor)\Big)\frac{1}{\chi_0(t, s_{a,0}^{-1}(s_{a,0}(z)+ B^0_r))}dr.
\end{align*}
\end{remark}

%%%%%%%%%%%%%%%%%%%%%%%%%%%%%%

\section{Proof of Theorem \ref{thm alpha tg}: Diffusion limit in an $\alpha(\cdot)$ environment}
\label{sec:varep>0}

We develop a martingale representation for $X^N_t$, discuss homogenization via a Replacement Lemma \ref{replacement-lemma}, and associated tightness of terms in the representation in the following subsections, before proving Theorem \ref{thm alpha tg} in Section \ref{identification subsection}.

\subsection{Martingale representation}
Let $J_t^{-}$ (respectively $J_t^{+}$) be the total number of jumps up to time $t$ by the tagged particle $X^N_\cdot$ to the left (respectively right). Such a process is a function of $(X^N_\cdot, \eta_\cdot)$. We may express the tagged particle location as $X^N_0 + J_t^{+} - J_t^{-}$ modulo $N$. 
These counting processes $J_t^\pm$ are compensated in terms of the jump rates $N^2\frac{g(\eta_t(X^N_t))}{\eta(X_t)}p^{N, \pm}_{X^N_t}$ so that 
\begin{equation*}
    M_t^{\pm} =
    J_t^{\pm} - N^2\int_{0}^t \frac{g(\eta_s(X^N_s))}{\eta_s(X^N_s)} \,p_{X^N_s}^{N, \pm} \,ds
\end{equation*}
are martingales.  One may also compute the quadratic variations, noting that the size of the jumps are $1=|\pm 1|$, as
\begin{equation*}
       \langle M^{\pm} \rangle_t 
        = N^2\int_0^t \frac{g(\eta_s(X^N_s))}{\eta_s(X^N_s)} \, p_{X^N_s}^{N,\pm} \,\,ds.
\end{equation*}
Since the jumps are not simultaneous, the martingales $M^+_t$ and $M^-_t$ are orthogonal, and therefore their cross variation vanishes.

Since $p_{X^N_s}^{N,+} - p_{X^N_s}^{N,-}=\frac{2}{N}\alpha^N_{X^N_s}$, we obtain a representation for $X^N_t$:
\begin{equation}
\frac{1}{N}X_t^N-\frac{1}{N}X^N_0 = 2\int_{0}^t \frac{g(\eta_s^N(X_s^N))}{\eta_s^N(X_s^N)} \,\alpha^N_{X^N_s}\,ds +  \frac{1}{N}M^N_t,
    \label{eq:Xt}
\end{equation}
modulo $1$ (actually supported on $\frac{1}{N}\T_N$), where $ M^N_t=M_{t}^{+} - M_{t}^{-}$.  Alternatively, one can extend $X^N_\cdot$ periodically to $\Z$, in which case \eqref{eq:Xt} would be an equation on $\R$ say.

The quadratic variation of $\frac{1}{N}M^N_t$,
adding the quadratic variations of $\frac{1}{N}M^{\pm}$, 
equals
\begin{equation}
    \frac{1}{N^2}\langle M^{N} \rangle_t 
    = \int_0^t \frac{g(\eta^N_s(X^N_s))}{\eta^N_s(X^N_s)} \,ds.
    \label{eq:qv}
\end{equation}

\subsection{Homogenization of rates}
To take limits in the martingale representation \eqref{eq:Xt}, we would like to replace the local rate $g(\eta_t(X_t^N))/\eta_t(X_t^N)$ by its appropriate continuum homogenization.
Since $\frac{g(n)}{n}$ is bounded and Lipschitz by our assumptions on $g$ (Section \ref{g assump section}),
 we state a `replacement' Lemma \ref{replacement-lemma} with respect to a bounded, Lipschitz function $h$ evaluated at coordinate $\eta_\cdot(X^N_\cdot)$.

 As mentioned in the introduction, we will replace $h(\eta_t(X^N_t))$ by its average with respect to a localized `equilibrium' distribution.  We will be able to show effects of the random environment are minimal in this localization.
At time $N^2t$, the system has settled so that near the location $X^N_t$ one expects the local configuration $\eta_t(X^N_t + x)$ for $1\leq x\leq \epsilon N$ to be distributed by a stationary distribution $\Pam_\cdot$ for the translation-invariant system indexed by the local random density $\eta_t^{\theta N}(X^N_t+x)$ (cf. \eqref{random density}).

This is formulated as the following result.

\begin{lemma}[The Replacement Lemma]
\label{replacement-lemma}
    Let $h$ be a bounded and Lipschitz function.  Let also $D^N_{x}$ be a bounded function of $x\in \T_N$, uniformly for all $N\in \N$. Then, for $t\in [0,T]$,
    \begin{equation}
        \limsup_{\ell \rightarrow \infty} \limsup_{\epsilon \rightarrow 0} \limsup_{\theta\rightarrow 0} \limsup_{N \rightarrow \infty} ~\E^N\left[ \left| \int_0^t D^N_{X^N_s}\Big(h(\eta_s(X^N_s)) - \frac{1}{\epsilon N} \sum_{x = 1}^{\epsilon N} \overline{H_{\ell}}(\eta_s^{\theta N}(X^N_s + x)) \Big)\,ds \right| \right] = 0
    \end{equation}
    where
    \begin{enumerate}
        \item $H(\rho) = E_{\nu_\rho^0}[h(\xi(0))]$ with $\ds \nu_\rho^0 = \frac{\xi(0)}{\rho} \Pam_\rho(\xi)$,
        \item $\overline{H_{\ell}}(\rho) = E_{\Pam_\rho}[H(\eta^\ell(0))]$ and $\Pam_\rho$ is defined in Section \ref{stat measures section}.
    \end{enumerate}
\end{lemma}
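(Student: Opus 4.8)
The plan is to prove the replacement by the classical Guo--Papanicolaou--Varadhan scheme, adapted to the tagged particle and the inhomogeneous environment, splitting the statement into a local $1$-block estimate, a local $2$-block estimate, and a global replacement estimate — carried out in Sections \ref{sec:loc_1b}, \ref{sec:loc_2b} and \ref{sec:glob_repl} — which are then chained by the triangle inequality on the outer absolute value. Before anything else one truncates particle numbers: by Lemma \ref{loc trun lemma}, which uses attractiveness (A) together with Condition \ref{cond: initial measure}(e), with $\PB^N$-probability tending to $1$ one has $\max_x\eta_s(X^N_s+x)\le C\log N$ for all $s\in[0,T]$, so it suffices to treat $h$, $H$ and $\overline{H_\ell}$ restricted to configurations with occupation at most $C\log N$. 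The common entry point is the relative entropy bound $\H(\mu^N|\nu^N)\le C_0 N$ of Condition \ref{cond: initial measure}(d), which is non-increasing along the dynamics generated by $N^2\mathcal{L}_N$; via the entropy (Feynman--Kac) inequality this bounds $\E^N\big[\big|\int_0^t V_s\,ds\big|\big]$, for a bounded local functional $V$ of $(X^N_s,\eta_s)$, by the sum of an $O(1/N)$ entropy term and a variational term of the form $\sup_f\{\langle V,f\rangle_{\nu^N}-N\,\mathfrak{D}_{\mathcal{L}_N}(\sqrt f;\nu^N)\}$, where $\mathfrak{D}_{\mathcal{L}_N}$ is the (symmetrized) Dirichlet form of $\mathcal{L}_N$ with respect to $\nu^N$.

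For the local $1$-block estimate, with $V$ built from $D^N_{X^N_s}\big(h(\eta_s(X^N_s))-H(\eta_s^\ell(X^N_s))\big)$, one localizes the variational problem to the block $\Lambda_\ell$ about the current tagged-particle site, discarding the rest of the Dirichlet form and translating the window to the origin. On such a block the joint dynamics of $(X^N_\cdot,\eta_\cdot)$ differs from the translation-invariant ($\alpha\equiv0$) one only through drifts $\alpha^N_k/N=O(1/N)$ and fugacity ratios $\phi_{k\pm1,N}/\phi_{k,N}=1+O(1/N)$ (Lemma \ref{lem: uniform bounds on phi max min}); hence, up to errors vanishing as $N\to\infty$, the relevant local invariant measure is the size-biased $\nu^0_\rho=(\xi(0)/\rho)\Pam_\rho$, which is precisely the conditional law, given that the tagged particle sits at the origin, of the translation-invariant joint stationary measure $\nu_\rho^N$. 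One then invokes the spectral gap bound $b_{\ell,j}=O(\ell^2)$ of \eqref{spec_gap_condition} for the null-environment process, valid under (LG) and (M), to conclude the $\ell$-block has relaxed, so the variational term vanishes as $N\to\infty$; a final equivalence-of-ensembles step identifies the size-biased microcanonical expectation on $\Lambda_\ell$ with $H(\eta^\ell(0))$ up to an error that disappears as $\ell\to\infty$.

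The local $2$-block estimate then replaces $H(\eta_s^\ell(X^N_s))$ by the window average $\frac{1}{\epsilon N}\sum_{x=1}^{\epsilon N}H(\eta_s^\ell(X^N_s+x))$: this is the usual two-block comparison of $\ell$-blocks at distance up to $\epsilon N$, carried out by transporting particles along the chain and again controlled by the spectral gap, with the near-constancy of the fugacities over the $\epsilon N$-window contributing an $O(\epsilon)$ error killed as $\epsilon\to0$. The global replacement finally passes from $\frac{1}{\epsilon N}\sum_xH(\eta_s^\ell(X^N_s+x))$ to $\frac{1}{\epsilon N}\sum_x\overline{H_\ell}(\eta_s^{\theta N}(X^N_s+x))$ by a super-block estimate: reorganizing the $\epsilon N$-window into $\theta N$-blocks, within each such block the $\ell$-block empirical average of $H$ concentrates, by the spectral gap at scale $\theta N$ together with equivalence of ensembles, on its equilibrium value $\overline{H_\ell}(\eta_s^{\theta N}(\cdot))$ — namely the $\Pam_\rho$-expectation of $H(\eta^\ell(0))$ at the observed $\theta N$-density $\rho$ — the environment over a $\theta N$-block drifting only by $O(\theta)$, which vanishes as $\theta\to0$.

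I expect the local $1$-block estimate to be the main obstacle. Unlike in the translation-invariant settings of \cite{JLS} and \cite{JLS1}, both the reference measure $\nu^N$ and the generator $\mathcal{L}_N$ are spatially inhomogeneous, so one must compare quantitatively, block by block, the inhomogeneous Dirichlet form and invariant measure with their null-environment counterparts and show that the $O(1/N)$ drift and the $O(\ell/N)$ fugacity tilt are absorbed in the limit $N\to\infty$ before the null-environment spectral gap can be applied. Carrying out this comparison simultaneously with the size bias at the tagged site and with the $\log N$ particle-number truncation — which is itself available only through attractiveness — is the delicate part, and is where the estimates of \cite{LPSX} on the inhomogeneous stationary measure (Lemma \ref{lem: uniform bounds on phi max min} and the resulting uniform bounds on the densities $\rho_{k,N}$) are essential.
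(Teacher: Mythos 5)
Your proposal matches the paper's proof: the same GPV decomposition into local $1$-block, local $2$-block and global replacement estimates chained by triangle inequality, the same entry via the $O(N)$ relative-entropy bound and the Feynman--Kac variational formula, the same $\log N$ truncation through attractiveness, the same comparison of the inhomogeneous canonical measures and Dirichlet forms to their null-environment counterparts using Lemma \ref{lem: uniform bounds on phi max min} and the spectral-gap bound \eqref{spec_gap_condition}, and the same reduction of the final step to the bulk hydrodynamic replacement of \cite{LPSX}. You also correctly pinpoint that the delicate issue is combining the size bias at the tagged site with the fugacity tilt and the particle-number truncation.
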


\begin{proof} The proof is divided into several steps. Each step will be dealt with in its own later section. These steps are: local 1-block in Section \ref{sec:loc_1b}, local 2-block in Section \ref{sec:loc_2b}, and global replacement in Section \ref{sec:glob_repl}.
   By the triangle inequality, we have
    \begin{align}
    \label{eq:lem:repl:split}
        &\E^N\left[ \left| \int_0^t D^N_{X^N_s}\Big(h(\eta_s(X^N_s)) - \frac{1}{\epsilon N} \sum_{x = 1}^{\epsilon N} \overline{H_\ell}(\eta_s^{\theta N}(X^N_s + x)) \Big)~ds \right| \right]\\
          &\ \ \ \ \   \leq
            \E^N\left[ \left| \int_0^t D^N_{X^N_s}\Big(h(\eta_s(X^N_s)) -  H(\eta_s^{\ell}(X^N_s))\Big) ~ds \right| \right]\nonumber\\
            &\ \ \ \ +
            \E^N\left[ \left| \int_0^t D^N_{X^N_s}\Big(H(\eta_s^{\ell}(X^N_s)) - \frac{1}{\epsilon N} \sum_{x = 1}^{\epsilon N} H(\eta_s^{\ell}(X^N_s + x))\Big) ~ds \right| \right]\nonumber\\
           &\ \ \ \ +
            \E^N\left[ \left| \int_0^t D^N_{X^N_s}\Big(\frac{1}{\epsilon N} \sum_{x = 1}^{\epsilon N} H(\eta_s^{\ell}(X^N_s + x)) -   \frac{1}{\epsilon N}\sum_{x=1}^{\epsilon N}\overline{H_\ell}(\eta_s^{\theta N}(X^N_s + x))\Big) ~ds \right| \right].\nonumber
    \end{align}
    
    As $h$ is bounded, Lipschitz, it is known that $H$ is bounded, Lipschitz; see Lemma 6.7 in \cite{JLS}.
    Then, each term on the right hand side of \eqref{eq:lem:repl:split} vanishes in the limit by Lemmas \ref{lem:L1b}, \ref{lem:L2b}, and \ref{lem:G} respectively.
\end{proof}

\subsection{Tightness of constituent processes}
We now state that the processes
$$\XX_N = (\pi_{\cdot}^N, \tau_{X_{\cdot}^N}\pi_{\cdot}^N, \tfrac{1}{N}X_{\cdot}^N, \tfrac{1}{N}M_{\cdot}^N, \langle \tfrac{1}{N}M^N \rangle_{\cdot} )$$ are tight in the associated Skorohod space $\DD([0,T]) = \DD([0, T],~ \MM_+(\T) \times \MM_+(\T) \times \T \times \R \times \R)$ endowed with the Skorohod topology.  In fact, we will show that $\XX_N$ is tight with respect to the uniform topology, associated with
$\mathcal{C}([0,T]) =  \mathcal{C}([0, T],~ \MM_+(\T) \times \MM_+(\T) \times \T \times \R \times \R)$. 
Here, $\tau_x$ is the shift operator so that
$$\tau_x\pi^N_\cdot = \frac{1}{N}\sum_{z\in \T_N}\eta_\cdot(z+x)\delta_{z/N}.$$

\begin{theorem}
\label{tightness thm}
The sequence $Q^N$ of distributions of $\XX_N$, belonging to $\DD([0,T])$, is tight with respect to the uniform topology of $\mathcal{C}([0,T])$.
\end{theorem}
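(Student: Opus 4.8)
The plan is to establish tightness coordinate by coordinate, using the elementary facts that a family of laws on a finite product of Polish spaces is tight iff each marginal family is tight, and that for processes in $\DD([0,T],E)$ tightness with respect to the uniform topology of $\CC([0,T],E)$ follows once one has (i) compact containment at each fixed time, (ii) an Aldous-type modulus of continuity estimate for increments over short time intervals, and (iii) the vanishing of the maximal jump $\sup_{0\le t\le T} d_E(\omega_t,\omega_{t-})$ in probability as $N\to\infty$; see Chapter 4 of \cite{KL}. The $\pi^N_\cdot$ coordinate is precisely the tightness that already underlies the hydrodynamic limit Theorem \ref{main thm}, so I would invoke \cite{LPSX}; for completeness the argument is the standard one: the total mass $\tfrac1N\sum_k\eta_t(k)=\tfrac1N\sum_k\eta_0(k)$ is conserved and, by Condition \ref{cond: initial measure}(e), stochastically dominated by the same functional under $\Pam_{N,c_1}$, hence bounded in probability uniformly in $N$, which places $\pi^N_t$ with probability tending to $1$ in the weakly compact ball $\{\mu\in\MM_+:\mu(\T)\le C\}$ and gives (i); for (ii)--(iii) one uses Dynkin's formula to write, for $G\in C^\infty(\T)$, the process $\langle G,\pi^N_t\rangle-\langle G,\pi^N_0\rangle-\int_0^t N^2L\langle G,\pi^N_s\rangle\,ds$ as a martingale, and the linear bound $g(n)\le g^*n$ from (LG) gives $|N^2L\langle G,\pi^N\rangle|\le C(G)\tfrac1N\sum_k\eta_s(k)$ together with a quadratic variation of order $C(G)N^{-1}$, so that $\langle G,\pi^N_\cdot\rangle$ is, up to a uniformly vanishing martingale, Lipschitz in time with a constant bounded in probability; running $G$ over a countable determining family yields (ii), while the jumps of $\pi^N$ are $O(1/N)$, yielding (iii).

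For the tagged-particle coordinates $\tfrac1N X^N_\cdot$, $\tfrac1N M^N_\cdot$ and $\langle\tfrac1N M^N\rangle_\cdot$ I would read everything off the martingale representation \eqref{eq:Xt}--\eqref{eq:qv}. The drift $2\int_0^t \tfrac{g(\eta_s(X^N_s))}{\eta_s(X^N_s)}\alpha^N_{X^N_s}\,ds$ is Lipschitz in $t$ with the \emph{deterministic} constant $2g^*\sup_{N,k}|\alpha^N_k|<\infty$ (using $g(n)/n\le g^*$ by (LG) and the uniform boundedness of the environment), hence equicontinuous. By \eqref{eq:qv} the bracket $\langle\tfrac1N M^N\rangle_t=\int_0^t\tfrac{g(\eta_s(X^N_s))}{\eta_s(X^N_s)}\,ds\le g^*t$, so Doob's $L^2$ inequality gives $\E_N[\sup_{t\le T}(\tfrac1N M^N_t)^2]\le C$, which is compact containment for $\tfrac1N M^N$ (for $\tfrac1N X^N\in\T$ it is automatic), and for any stopping time $\tau\le T$ and $\delta>0$ one has $\E_N[(\tfrac1N M^N_{\tau+\delta}-\tfrac1N M^N_\tau)^2]=\E_N[\langle\tfrac1N M^N\rangle_{\tau+\delta}-\langle\tfrac1N M^N\rangle_\tau]\le g^*\delta$, which is Aldous' criterion; the jumps of $\tfrac1N M^N$, hence of $\tfrac1N X^N$, are $\pm1/N$, so (iii) holds. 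Passing through the $\R$-valued unwrapped version of \eqref{eq:Xt} and projecting back to $\T$ transfers the modulus bound to $\tfrac1N X^N_\cdot$ on the torus. Finally $\langle\tfrac1N M^N\rangle_\cdot$ is Lipschitz in $t$ with constant $g^*$ and takes values in $[0,g^*T]$, hence is trivially equicontinuous and tight.

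For the shifted empirical measure $\tau_{X^N_\cdot}\pi^N_\cdot$ I would use a continuity argument rather than a direct estimate. On the compact metric space $\T\times\{\mu\in\MM_+:\mu(\T)\le C\}$ the shift map $(\xi,\mu)\mapsto\tau_\xi\mu$, characterized by $\langle G,\tau_\xi\mu\rangle=\langle G(\cdot-\xi),\mu\rangle$ for $G\in C(\T)$, is jointly continuous (since $G$ is uniformly continuous on $\T$) and therefore uniformly continuous, so the induced map $(\xi_\cdot,\mu_\cdot)\mapsto(\tau_{\xi_t}\mu_t)_{t}$ is continuous from $\CC([0,T],\T)\times\CC([0,T],\MM_+)$ into $\CC([0,T],\MM_+)$ in the uniform topologies, restricted to mass-$\le C$ paths. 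Since $(\tfrac1N X^N_\cdot,\pi^N_\cdot)$ is tight in that product by the preceding two paragraphs (and since $\pi^N_\cdot$ stays in the mass-$\le C$ ball with probability tending to $1$), applying this continuous map shows $\tau_{X^N_\cdot}\pi^N_\cdot$ is tight in $\CC([0,T],\MM_+)$, jointly with $\tfrac1N X^N_\cdot$ and $\pi^N_\cdot$. Assembling the five coordinates, together with the vanishing-jumps property (iii) of each, which forces all limit points onto $\CC([0,T])$, completes the argument.

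The genuinely substantive input is the modulus-of-continuity bound for $\pi^N_\cdot$ via Dynkin's formula and the linear growth bound $g\le g^*\,\mathrm{id}$ — but this is exactly what \cite{LPSX} already carries out in proving Theorem \ref{main thm}, and none of the homogenization in the Replacement Lemma \ref{replacement-lemma} is needed. The only care required beyond that is bookkeeping of three soft reductions (product topology to coordinates, Skorokhod to uniform topology via vanishing jumps, and the continuity of the shift composition together with the uniform mass bound), so I do not expect a serious obstacle: this is ``soft'' tightness.
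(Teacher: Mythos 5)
Your proof is correct and follows the same overall scheme as the paper's (coordinate-by-coordinate tightness using the martingale representation \eqref{eq:Xt}--\eqref{eq:qv}, Aldous' criterion for $\tfrac1N M^N$, the LPSX tightness for $\pi^N$, and continuity of the shift composition for $\tau_{X^N_\cdot}\pi^N_\cdot$), but the middle of the argument is organized differently and is in fact a bit more economical. The paper first proves uniform tightness of $\tfrac1N X^N_\cdot$ \emph{directly} by coupling $X^N$ with a rate-$g^*$ nearest-neighbor random walk $Z^N$ with the same jump skeleton and shorter holding times (the Lemma 3.3 argument of \cite{JLS}), and only then obtains the uniform-topology upgrade for $\tfrac1N M^N_\cdot$ \emph{indirectly}: Aldous gives Skorokhod tightness of $\tfrac1N M^N$, and since $\tfrac1N X^N$, $\tfrac1N X^N_0$ and $C^N$ are already known to have continuous-path limits, the identity $\tfrac1N M^N=\tfrac1N X^N-\tfrac1N X^N_0-C^N$ forces limit points of $\tfrac1N M^N$ onto $\CC([0,T])$. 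You instead get uniform tightness of $\tfrac1N M^N$ at once from Aldous together with the $O(1/N)$ maximal jump (so that all Skorokhod limit points are continuous), and then read off tightness of $\tfrac1N X^N$ from the decomposition, avoiding the random-walk coupling altogether. Both routes are valid; yours eliminates the auxiliary coupling at the cost of making the vanishing-jump step explicit, while the paper's coupling argument is self-contained and does not require invoking the Skorokhod-to-uniform transfer for $\tfrac1N X^N$. The treatment of $\tau_{X^N_\cdot}\pi^N_\cdot$ is the same in substance — the paper writes $\langle G,\tau_{X^N_t}\pi^N_t\rangle=\langle \tau_{-X^N_t/N}G,\pi^N_t\rangle$ and invokes Mit\^oma's criterion, whereas you phrase it as joint continuity of $(\xi,\mu)\mapsto\tau_\xi\mu$ on the mass-bounded ball, which is the same observation.
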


\begin{proof}
Let $A^N_t = \int_0^t \frac{g(\eta_s(X_s^N)}{\eta_s(X^N_s)}ds$ and $C^N_t = \int_0^t \frac{g(\eta_s(X_s^N)}{\eta_s(X^N_s)} \alpha^N_{X^N_s}ds$ so that $\tfrac{1}{N}(X^N_t - X^N_0) = \tfrac{1}{N}M^N_t + C^N_t$, modulo $1$, in \eqref{eq:Xt}.  Since $g(n)\leq g^*n$ by assumption (LG), and $\sup_N\sup_{x\in \T_N}|\alpha^N_{x}|<\infty$, we have the processes $A^N_\cdot$ and $C^N_\cdot$ are tight with respect to the uniform topology by Kolmogorov-Centsov criterion.  Since $\langle \tfrac{1}{N}M^N\rangle_\cdot = A^N_\cdot$ (cf. \eqref{eq:qv}), we have also that $\langle \tfrac{1}{N}M^N\rangle_\cdot$ is tight.

Since $\tfrac{1}{N}X^N_0$ is assumed to converge weakly to $Z_X$, the initial value is therefore tight.  
The same argument as in Lemma 3.3 in \cite{JLS} shows that $\tfrac{1}{N}X^N_\cdot$ is tight in the uniform topology.  Indeed, the jump rate of $X^N$ is $g(\eta(X^N))/\eta(X^N)\leq g^*$, and so one can couple $X^N_\cdot$ with $Z^N_\cdot$ a random walk with rate $g^*$ on $\T_N$ with the same skeleton but holding times less than or equal to those of $X^N_\cdot$.  Then, $\tfrac{1}{N}\sup_{|t-s|\leq \theta}|X^N_t-X^N_s| \leq \tfrac{1}{N}\sup_{|t-s|\leq \theta}|Z^N_t-Z^N_s|$.  Tightness then follows by that of scaled simple symmetric random walk on $\T_N$.

Moreover, by Aldous criterion, as
\begin{align*}
\lim_{\gamma\downarrow 0}\sup_\tau \sup_{\theta\leq \gamma}\frac{1}{N^2}\mathbb{E}^N[|M^N_{\tau+ \theta} - M^N_\tau|^2]
 = \lim_{\gamma\downarrow 0}\sup_\tau \sup_{\theta\leq \gamma}\mathbb{E}^N\big[\int_\tau^{\tau+\theta}\frac{g(\eta_r(X^N_r)}{\eta_r(X^N_r)}dr\big] 
\leq \lim_{\gamma\downarrow 0}\sup_{\theta\leq \gamma}g^*\theta = 0,
\end{align*}
we conclude that $\tfrac{1}{N}M^N_\cdot$ is tight in the Skorohod topology.
Since $\tfrac{1}{N}X^N_t - \tfrac{1}{N}X^N_0 - C^N_t = \tfrac{1}{N}M^N_t$ (cf. \eqref{eq:Xt}), any limit point $(x_\cdot, m_\cdot, C_\cdot)$ of $\big(\tfrac{1}{N}X^N_\cdot, \tfrac{1}{N}M^N_\cdot, A^N_\cdot\big)$ is such that $x_t - x_0-C_t = m_t$.  Since $x_\cdot$ and $C_\cdot$ are supported on continuous paths by the tightness shown in the uniform topology, we conclude $m_\cdot$ is also supported on continuous paths, and therefore  
$\tfrac{1}{N}M^N_\cdot$ is tight in the uniform topology.

The empirical measure $\pi^N_\cdot$ is also tight in the uniform topology as shown in Section 8 in \cite{LPSX} with respect to the hydrodynamic limit Theorem \ref{main thm} shown there.

Finally, by Mit\^{o}ma's criterion, to show $\tau_{X^N_t}\pi^N_t$ is tight, it is sufficient to show $\langle G,\tau_{X^N_t}\pi^N_t\rangle $ is tight for each continuous, compactly supported function $G$.  Write
\begin{align*}
\langle G,\tau_{X^N_t}\pi^N_t\rangle &=\frac{1}{N}\sum_{x\in \T_N} \eta_t(x+X^N_t)G(\tfrac{x}{N})= \frac{1}{N}\sum_{x\in \T_N} \eta_t(x)G(\tfrac{x -X^N_t}{N}) = \langle \tau_{-\tfrac{1}{N}X^N_t}G, \pi^N_t\rangle.
\end{align*}
Since $t\mapsto \langle \tau_{-x_t}G, \pi_t\rangle$ is a continuous function of $(x_\cdot, \pi_\cdot)$ and
$(\tfrac{1}{N}X^N_\cdot, \pi^N_\cdot)$ is tight with respect to the uniform topology, 
we have that $\tau_{X^N_\cdot}\pi^N_\cdot$ is also tight in the uniform topology.
\end{proof}

\subsection{Identification of limit points and proof of Theorem \ref{thm alpha tg}}
\label{identification subsection}
We now describe the limit points of $Q_N$, in which the fourth item is the statement of Theorem \ref{thm alpha tg}.

\begin{theorem}
Under the same assumptions as in Theorem \ref{thm alpha tg},
$Q_N$ converges with respect to the uniform topology
 to the law $Q$ concentrated on trajectories $\XX = (\pi_{\cdot}, \tau_{x_{\cdot}}\pi_{\cdot}, x_{\cdot}, m_{\cdot}, A_{\cdot})$ such that
    \begin{enumerate}
        \item $\pi_t(du) = \rho(t, u) \,du$ where $\rho$ is the unique weak solution to \eqref{eqn: with ave}.
        \item $\tau_{x_t}\pi_t(du) = \rho(t, x_t + u) \,du$.
        \item $\ds A_t = \int_0^t \frac{\Phi(\rho(s, x_s))}{\rho(s, x_s)} \,ds$.
        \item $x_t$ is the unique weak solution for \eqref{eq:thm:sde:alpha} with $\ds x_0 \stackrel{d}{=} Z_X$, governed by the process measure \eqref{process measure}.
    \end{enumerate}
    \label{thm:main_2}
\end{theorem}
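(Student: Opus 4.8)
The plan is to combine the tightness of Theorem~\ref{tightness thm} with an identification of all subsequential limits. Since $\{Q_N\}$ is tight in $\CC([0,T])$, it is enough to show that every subsequential limit $Q$ is concentrated on trajectories $\XX=(\pi_\cdot,\varpi_\cdot,x_\cdot,m_\cdot,A_\cdot)$ satisfying (1)--(4), and that such a $Q$ is unique; item~(4) is then exactly Theorem~\ref{thm alpha tg}. Fix a subsequence along which $Q_N\to Q$ and, by Skorokhod's representation theorem, realize the corresponding $\XX_N\to\XX$ almost surely in $\CC([0,T])$ on a common probability space. Statement~(1), $\pi_t(du)=\rho(t,u)\,du$ with $\rho$ the weak solution of \eqref{eqn: with ave}, is read off from the hydrodynamic limit Theorem~\ref{main thm} at each fixed $t$ together with the weak continuity in $t$ from Definition~\ref{weak_def}. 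For~(2), the map $(y_\cdot,\mu_\cdot)\mapsto\tau_{y_\cdot}\mu_\cdot$ is continuous on $\CC([0,T],\T\times\MM_+)$, as already used in the proof of Theorem~\ref{tightness thm}, so $\varpi_t=\tau_{x_t}\pi_t$ has density $\rho(t,x_t+\cdot)$.

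The heart of the argument is (3) and the drift part of (4), both obtained from the Replacement Lemma~\ref{replacement-lemma} applied to $h(n)=g(n)/n$ --- bounded and Lipschitz on $\{n\ge1\}$ by (LG) and (M), with $\eta_s(X^N_s)\ge1$ always --- taking $D^N_x\equiv1$ for~(3) and $D^N_x=\alpha^N_x$ (uniformly bounded in $x$ and $N$) for the drift. The homogenized rate is computed using \eqref{Phi_eqn} and $\Pam_\rho=\prod_k\Qb_\rho$:
\[
H(\rho)=E_{\nu^0_\rho}\!\big[g(\xi(0))/\xi(0)\big]=\frac1\rho\,E_{\Pam_\rho}\!\big[g(\xi(0))\big]=\frac{\Phi(\rho)}{\rho},
\]
which lies in $[b_0,b_1]$ by Lemma~\ref{lem:smoothness}, and $\overline{H_\ell}(\rho)=E_{\Pam_\rho}[H(\eta^\ell(0))]\to H(\rho)$ as $\ell\to\infty$ by the law of large numbers under the i.i.d.\ measure $\Pam_\rho$. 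It then remains to pass to the limit --- in the order $N\to\infty$, $\theta\to0$, $\epsilon\to0$, $\ell\to\infty$, as dictated by the lemma --- in $\int_0^t D^N_{X^N_s}\,\frac1{\epsilon N}\sum_{x=1}^{\epsilon N}\overline{H_\ell}(\eta^{\theta N}_s(X^N_s+x))\,ds$. Rewriting each block average through the shifted empirical measure $\varpi^N_s=\tau_{X^N_s}\pi^N_s$ via the identity preceding Lemma~\ref{lem:smoothness} (and replacing the sharp indicator by continuous kernels at a controllable cost), the almost sure convergence $\varpi^N_\cdot\to\rho(\cdot,x_\cdot+\cdot)\,du$, the uniform convergence $\alpha^N_{\lfloor Nu\rfloor}\to\alpha(u)$, and --- crucially --- the joint space--time continuity of $\rho$ from Lemma~\ref{lem:smoothness}(2a) let the four limits be taken successively: $N\to\infty$ turns the sums into integrals of $\rho$-block averages, $\theta\to0$ collapses the inner block average to $\rho(s,x_s+w)$, $\epsilon\to0$ collapses the outer average to $\rho(s,x_s)$, and $\ell\to\infty$ replaces $\overline{H_\ell}$ by $H$. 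This gives $A_t=\int_0^t\Phi(\rho(s,x_s))/\rho(s,x_s)\,ds$ (item~(3)) and, returning to \eqref{eq:Xt}, that the drift of $\tfrac1N X^N_\cdot$ converges to $2\int_0^\cdot\alpha(x_s)\Phi(\rho(s,x_s))/\rho(s,x_s)\,ds$; this portion follows the scheme of \cite{JLS}.

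It remains to treat the martingale in~(4). The process $\tfrac1N M^N_\cdot$ has jumps of size $1/N\to0$ and predictable quadratic variation $\langle\tfrac1N M^N\rangle_\cdot=A^N_\cdot\to A_\cdot$ (cf.~\eqref{eq:qv}), so by a standard martingale central limit theorem --- jointly with the other coordinates --- $\tfrac1N M^N_\cdot$ converges to a continuous $(\FF_t)$-martingale $m_\cdot$ with $\langle m\rangle_t=A_t$. Since $\Phi(\rho)/\rho\ge b_0>0$ by Lemma~\ref{lem:smoothness}, $A_\cdot$ is strictly increasing and continuous, so the martingale representation theorem produces a standard Brownian motion $B$ on a possibly enlarged space with $m_t=\int_0^t\sqrt{\Phi(\rho(s,x_s))/\rho(s,x_s)}\,dB_s$; passing to the limit in \eqref{eq:Xt} and reducing modulo $1$ as in the unwrapping discussion of Section~\ref{ch:main_thm}, $x_\cdot$ solves \eqref{eq:thm:sde:alpha} with $x_0\stackrel{d}{=}Z_X$. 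For uniqueness in law, pass to the lifted $\R$-valued equation \eqref{eq:diff_torus_sde_R}: its coefficients $b(t,y)=2\alpha(y)\Phi(\rho(t,y))/\rho(t,y)$ and $\sigma(t,y)=\sqrt{\Phi(\rho(t,y))/\rho(t,y)}$ are continuous, bounded, with $\sigma$ bounded below, and $b/\sigma^2=2\alpha$ does not depend on $t$, so the scale function $\mathfrak{s}(y)=\int_0^y\exp\!\big(-4\int_0^z\alpha(w)\,dw\big)\,dz$ is time-independent and $\mathfrak{s}(\hat x_t)$ is a continuous martingale; weak uniqueness thus reduces to well-posedness of a uniformly elliptic one-dimensional martingale problem with bounded continuous coefficients, which is classical. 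Uniqueness of $\hat x^z$ for each $z\in\R$ then yields, through \eqref{process measure}, uniqueness of the torus-valued mixture $x^{Z_X}$; hence all subsequential limits agree and $Q_N\to Q$.

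The step I expect to be the main obstacle is the homogenization in the second paragraph: the Replacement Lemma only furnishes the iterated-limit bound, and converting it into the closed identities for $A_\cdot$ and the drift requires both the joint (space--time) H\"older continuity of the hydrodynamic density $\rho$ --- not provided by \cite{LPSX} and obtained here in Lemma~\ref{lem:smoothness} --- and the joint convergence of $(\tfrac1N X^N_\cdot,\tau_{X^N_\cdot}\pi^N_\cdot)$, since the relevant block averages follow the moving tagged particle and must be controlled uniformly in $s$.
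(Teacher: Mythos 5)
Your proposal follows the same skeleton as the paper's proof: tightness from Theorem~\ref{tightness thm}, then identification of subsequential limits in four claims, with (1) from the hydrodynamic limit, (2) from continuity of the shift map, (3) and the drift from the Replacement Lemma~\ref{replacement-lemma} applied to $h(n)=g(n)/n$ combined with the space-time continuity of $\rho$ from Lemma~\ref{lem:smoothness}, and (4) from a martingale functional CLT and martingale representation to obtain the driving Brownian motion. The computation $H(\rho)=\Phi(\rho)/\rho$, the choice of $D^N$, and the order of limits all agree with the paper. The one genuine variation is in the weak-uniqueness step: you observe that $b/\sigma^2 = 2\alpha$ is time-independent, so the scale change $\mathfrak{s}$ turns the lifted $\R$-valued SDE into a driftless, uniformly elliptic one-dimensional equation with bounded continuous (time-dependent) diffusion coefficient, for which weak uniqueness is classical; the paper instead cites Gihman--Skorohod (Theorems III.3.5, III.3.6) and Stroock--Varadhan directly for bounded continuous drift and nondegenerate diffusion. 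Both reach the same conclusion; your reduction is a bit more self-contained and exploits the special structure $b=2\alpha\sigma^2$, whereas the paper's citation is more general and does not depend on this algebraic coincidence.
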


\begin{proof}
    By Theorem \ref{tightness thm},
    we have tightness of $\XX_N$ with respect to the uniform topology. So we may take a subsequence $\{N_k\}$ such that $\XX_{N_k}$ converges in distribution to $\XX$ supported on continuous paths on $[0,T]$. To reduce notation, let us assume that $\XX_N$ converges in distribution to $\XX$ whose law is $Q$. Now, we will prove items \#1 - \#4.

    \paragraph*{Claim \#1.}
    The first item is the hydrodynamics result Theorem \ref{main thm} (cf. Theorem 3.3 in \cite{LPSX}). 
    
    \paragraph*{Claim \#2.}
    For the 2nd item, note as in the proof of Theorem \ref{tightness thm} for any $G \in C(\T)$, we have
            $\langle G,\, \tau_{X_t^N}\pi_t^N \rangle 
         = \langle \tau_{(-\tfrac{1}{N}X_t^N)}G,\, \pi_t^N \rangle$.
The right hand side $\langle \tau_{-\tfrac{1}{N}X_\cdot^N}G, \pi_\cdot^N\rangle$ is a continuous function of $(\tfrac{1}{N}X^N, \pi^N)$ which converges to $\langle \tau_{-x_\cdot}G, \pi_\cdot \rangle$.  To identify the limiting distribution, it is sufficient to identify the finite dimensional distributions.    In particular, since $\langle \tau_{-X_t^N}G, \pi_t^N\rangle$ converges to
      $\int_\T G(u - x_t) \,\rho(t, u) \,du
            = \int_\T G(u) \,\rho(t, x_t + u) \,du$,
we have $\tau_{\frac{1}{N}X_\cdot^N}\pi_\cdot^N \rightarrow \tau_{x_\cdot}\pi_\cdot(du) = \rho(\cdot, x_\cdot + u)\,du$ as claimed.

    \paragraph*{Claim \#3.}
    For the 3rd item, we apply Lemma \ref{replacement-lemma} to \eqref{eq:qv} by setting $h(n) = g(n)/n$, $H(\rho) = \Phi(\rho)/\rho$ and $D^N_{X^N_s}\equiv 1$. With $G=\iota_\theta$, noting $\eta^{\theta N}(x) = \langle \iota_\theta, \pi^N\rangle$
    where $\iota_\theta = (2 \theta)^{-1} \OneB_{[-\theta, \theta]}$, we have by Claim \#2
  taking $N \rightarrow \infty$ that
    \begin{equation*}
        \lim_{\ell \rightarrow \infty} \lim_{\epsilon \rightarrow 0} \lim_{\theta \rightarrow 0} ~Q\left[ ~\left| A_t - \int_0^t \frac{1}{2\epsilon} \int_{-\epsilon}^{\epsilon} \overline{H}_\ell(\langle \tau_{x_s}\iota_\theta,~ \pi_s \rangle) \,du \,ds \right| > \delta \right] = 0 ~~,~~ \forall \delta > 0 ~,~ \forall t \in [0, T].
    \end{equation*}
Since $\rho$ is continuous (Part 2(a) of Lemma \ref{lem:smoothness}) given $\alpha\in C(\T)$ and $\rho_0\in C^\beta(\T)$ for $\beta>0$, as $\theta \rightarrow 0$, we obtain $\langle \tau_{x_s} \iota_\theta, \pi_s \rangle \rightarrow \rho(s, x_s)$.
In addition, by bounded convergence, we have $\overline{H}_\ell(\rho) \rightarrow H(\rho)$ as $\ell\uparrow\infty$.
Therefore, $Q$-a.s.,
    \begin{equation*}
        \lim_{\ell \rightarrow \infty} \lim_{\epsilon \rightarrow 0} \lim_{\theta \rightarrow 0}~ \int_0^t \frac{1}{2\epsilon} \int_{-\epsilon}^{\epsilon} \overline{H}_\ell(\pi_s(\tau_{x_s} \iota_\theta) \,du \,ds = \int_0^t \frac{\Phi(\rho(s, x_s))}{\rho(s, x_s)} \,ds.
    \end{equation*}

    \paragraph*{Claim \#4.}
  The quadratic variation $\langle \tfrac{1}{N}M^N \rangle_t$ converges to $A_t$ by Claim $\#3$.  Then, by Proposition IX.1.12 in \cite{revuz2004continuous}, the martingale $\tfrac{1}{N}M_t^N$, as its quadratic variation is uniformly bounded in $N$, converges to a continuous martingale $m_t$.
  Hence, by Corollary VI.6.29 in \cite{revuz2004continuous}, $(\tfrac{1}{N}M^N_t, \langle \tfrac{1}{N}M^N\rangle_t)$ converges to $(m_t, \langle m\rangle_t)$, and as $(\tfrac{1}{N}M^N_t, \langle \tfrac{1}{N}M^N\rangle_t)$ converges to $(m_t, A_t)$, the quadratic variation of $\langle m\rangle_t=A_t$.

By Doob's martingale representation Theorem 3.4.2 and Remark 3.4.3 in \cite{karatzas1991brownian} (noting the derivative of $A_t$ is positive as $\Phi(u)/u\geq g_*>0$),
there exists a Brownian motion $B$ on the probability space $(\Omega, \FF, P)$,
where $m_t$, $x_t$, $A_t$ are defined, with the same filtration $\{\FF_t\}$,
such that $m_t = \int_0^t \sqrt{\frac{\Phi(\rho(s, x_s))}{\rho(s,x_s)}}d{B}_s$.

    The random environment factor $\alpha^N_{X_\cdot^N} = \alpha^N_{N(\tfrac{1}{N}X^N_\cdot)} = Y^N_{N(\tfrac{1}{N}X^N_\cdot)}$ in the drift term of \eqref{eq:Xt}.  As $Y^N_{N\cdot}$ converges uniformly to the continuous limit $\alpha_\cdot$, we may replace $Y^N_{N(\tfrac{1}{N}X^N_\cdot)}$ with $\alpha(\tfrac{1}{N}X^N_\cdot)$ in \eqref{eq:Xt} with a vanishing deterministic error, since $g(n)/n$ is bounded by (LG).  Note that the function $\alpha(x_\cdot)$ is continuous in $x_\cdot$.

Therefore, applying Lemma \ref{replacement-lemma} to the drift term 
$\int_0^t \alpha(\tfrac{1}{N}X^N_s) \frac{g(\eta_s(X^N_s))}{\eta_s(X^N_s)} ds$ in \eqref{eq:Xt}, with $h(n)=g(n)/n$ and $D^N_{X^N_s}=\alpha(\tfrac{1}{N}X^N_s)$, and performing the same analysis as in the proof of claim \#3 yields convergence of 
$\int_0^t  \alpha(\tfrac{1}{N}X^N_s) \frac{g(\eta_s(X^N_s))}{\eta_s(X^N_s)} ds$
to $\int_0^t \alpha(x_s) \frac{\Phi(\rho(s,x_s))}{\rho(s,x_s)} ds$, the error vanishing in probability with respect to $Q_N$.
We have also $\frac{1}{N}(X^N_t - X^N_0)$ converges in distribution to $x_t-x_0$.    By assumption, the initial distribution of $\frac{1}{N}X^N_0$ converges to the law of $Z_X$.  Therefore, $x_0$ is distributed as $Z_X$. 

Let $\hat x_t$ be the $1$-periodic extension of $x_t$.  Since $Q_N$ converges weakly to $Q$, and $\mathcal{U}_\delta=\big\{\hat x_t-\hat x_0-C_t-m_t|>\delta\big\}$ is an open set in $C([0,T])$, we conclude from the Portmanteau theorem applied to $\mathcal{U}_\delta$ that $\hat x_t-\hat x_0=C_t+m_t$.
Hence, there exists a probability space 
$(\Omega, \FF, P)$ and a Brownian motion ${B}$ defined on that probability space for which the convergence point $x_t$ satisfies the stochastic integral equation given in (\ref{eq:thm:sde}), interpreted via $\hat x_t$ in \eqref{eq:diff_torus_sde_R}.   
  So the triple $((x_t, {B}), 
	(\Omega, \FF, P), 
	\{\FF_t\})$ is a weak solution to \eqref{eq:thm:sde:alpha}.

  By the uniqueness of such weak solutions (Theorems III.3.5 (and remark after) and III.3.6 in \cite{Gihman}; see also \cite{Stroock}),
  given that the diffusion coefficient $\frac{\Phi(\rho(s,x_s))}{\rho(s,x_s)}$ is continuous, positive and bounded above and below, and the drift $\alpha(x_s)\frac{\Phi(\rho(s,x_s))}{\rho(s,x_s)}$ is continuous and bounded, all limit points converge to this solution.  
  
  We remark by the same citations, uniqueness of weak solution also holds for $\hat x_\cdot$ when the initial condition is $\hat x_0=z$ for each $z\in \R$.  
  Hence, the process measure $E_{Z_X}[\mathcal{P}^z]$ discussed in \eqref{process measure} governs the distribution of $x_\cdot$ when starting from $x_0\stackrel{d}{=} Z_{X}$.
  This completes the proof.  
    \end{proof}
		
%%%%%%%%%%%%%%%%%%%%%%%%%%%%%%%

\section{Proof of Theorem \ref{thm singular sde}: Singular diffusion limit as $\varepsilon\downarrow 0$}
\label{sec:varep-to-0}
We focus on the quenched setting in the following.  We observe that $x^\varepsilon$ on $\T$ has representation as a diffusion $\hat x^\varepsilon$ on $\R$ with periodic coefficients (cf. \eqref{eq:diff_torus_sde_R}).  
The plan is to take the limit of the It\^{o}-McKean representation of $\hat x^\varepsilon$, and thereby show Theorem \ref{thm singular sde}.

 Define $\chi_\varepsilon(t, x) = \Phi(\rho^\varepsilon(t,x))/\rho^\varepsilon(t,x)$ for $\varepsilon>0$.
 Recall the process $\hat x^\varepsilon_t$ on $\R$, with initial value $\hat x^\varepsilon_0\stackrel{d}{=}Z_W$ and
 periodic drift $b_\varepsilon(t,x) = 2\chi_\varepsilon(t,x) \, W_\varepsilon'(x)$ and diffusion coefficient $\sigma_\varepsilon^2(t,x) = \chi_\varepsilon(t,x)$, bounded above and below.  Recall also that its process measure is $E_{Z_X}[\hat{\mathcal{P}}^z]$ where $\hat{\mathcal{P}}^z$ is the process measure when $\hat x^\varepsilon_0=z\in \R$.
 
 After a few `steps', Theorem \ref{thm singular sde} is proved at the end of the section.

 \vskip .1cm
 {\it Step 1.}
 In the quenched setting, that is with respect to a realization of $W$, define the `scale' function
\begin{equation*}
        s_\varepsilon(x) = \int_0^x \exp\left(-\int_0^y 2b_\varepsilon(t, z)/\sigma^2_\varepsilon(t, z) \, dz\right)\, dy = \int_0^x \exp \left(-4 \int_0^y W_\varepsilon'(z) \,dz \right)\, dy.
\end{equation*}
  By exploiting the periodicity of $W_\varepsilon'$, we calculate
    \begin{equation}
    \label{s periodic}
        s_\varepsilon(x) = \int_0^x \exp \left(-4\left( \lfloor y \rfloor \int_0^1 W_\varepsilon'(z) \,dz + \int_0^{y - \lfloor y \rfloor} W_\varepsilon'(z) \,dz \right) \right) \,dy.
    \end{equation}
Note, as $W'_\varepsilon\in C^{1/2-}(\T)$, that $s_\varepsilon\in C^{5/2-}(\T)$.

We may identify its limit.
\begin{lemma}
    In the quenched setting, $s_\varepsilon$ converges to $s_0$, defined in \eqref{prop:s0_lim1}, uniformly on compact sets of $\R$.  Moreover, since $s_\varepsilon$ and $s_0$ are strictly increasing, $s^{-1}_\varepsilon$ also converges uniformly to $s^{-1}_0$ on compacts sets of $\R$.
    \label{prop:s0_lim}
\end{lemma}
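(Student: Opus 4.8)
The plan is to pass to the limit $\varepsilon\downarrow 0$ inside the explicit formula \eqref{s periodic}, using the fact, already established in the quenched setup of Section \ref{quenched-section}, that $W^N$ converges uniformly to the Brownian path $W$ and hence that $W'_\varepsilon = \frac{d}{dx}(W*\psi_\varepsilon)$ is a genuine mollification of $W'$. The key observation is that, although $W'_\varepsilon$ itself only converges distributionally (to $W'$), its \emph{antiderivative} $y\mapsto \int_0^y W'_\varepsilon(z)\,dz = W*\psi_\varepsilon(y) - W*\psi_\varepsilon(0)$ converges \emph{uniformly on compacts} to $W(y)-W(0)=W(y)$ (recall $W(0)=0$) by standard properties of mollification of a continuous function; similarly $\int_0^1 W'_\varepsilon(z)\,dz \to W(1)$. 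This is exactly the content needed, because the only way $W'_\varepsilon$ enters \eqref{s periodic} is through these two integrated quantities.

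First I would fix a compact set, say $[-R,R]\subset\R$, and write, for $y\in[-R,R]$, $F_\varepsilon(y) := \lfloor y\rfloor \int_0^1 W'_\varepsilon(z)\,dz + \int_0^{y-\lfloor y\rfloor} W'_\varepsilon(z)\,dz$, so that $s_\varepsilon(x) = \int_0^x e^{-4F_\varepsilon(y)}\,dy$, and correspondingly $F_0(y) := \lfloor y\rfloor W(1) + W(y-\lfloor y\rfloor)$, so that $s_0(x)=\int_0^x e^{-4F_0(y)}\,dy$ by \eqref{prop:s0_lim1}. Then I would argue: (i) $\int_0^1 W'_\varepsilon \to W(1)$ and $\sup_{0\le u\le 1}\big|\int_0^u W'_\varepsilon(z)\,dz - W(u)\big|\to 0$ as $\varepsilon\downarrow 0$, by uniform convergence of $W*\psi_\varepsilon$ to $W$ on $[-1,2]$ (using continuity of $W$ and $\int\psi=1$, $\psi\in C^1$); (ii) hence $\sup_{|y|\le R}|F_\varepsilon(y)-F_0(y)|\to 0$, using that $|\lfloor y\rfloor|\le R$ is bounded on the compact and $y-\lfloor y\rfloor\in[0,1)$; (iii) therefore $e^{-4F_\varepsilon}\to e^{-4F_0}$ uniformly on $[-R,R]$, since $F_\varepsilon$ is uniformly bounded there (it converges uniformly to the bounded function $F_0$) and $t\mapsto e^{-4t}$ is Lipschitz on bounded sets; (iv) integrating, $|s_\varepsilon(x)-s_0(x)|\le \int_0^{|x|}|e^{-4F_\varepsilon(y)}-e^{-4F_0(y)}|\,dy \le 2R\,\|e^{-4F_\varepsilon}-e^{-4F_0}\|_{L^\infty[-R,R]}\to 0$ uniformly in $x\in[-R,R]$. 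Since $R$ was arbitrary this gives uniform convergence on compacts of $\R$.

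For the second assertion, note $s_\varepsilon'(x)=e^{-4F_\varepsilon(x)}>0$ and $s_0'(x)=e^{-4F_0(x)}>0$, so both are strictly increasing homeomorphisms of $\R$ onto their (open, possibly half-line) images; moreover $s_\varepsilon(0)=s_0(0)=0$. I would then invoke the standard fact that uniform-on-compacts convergence of monotone functions whose limit is continuous and strictly increasing transfers to uniform-on-compacts convergence of the inverses: given a compact $[-M,M]$ in the target, choose $R$ with $s_0([-R,R])\supset[-2M,2M]$, observe that for $\varepsilon$ small $s_\varepsilon([-R,R])\supset[-M,M]$ as well (by uniform closeness of $s_\varepsilon$ to $s_0$ and strict monotonicity), and conclude $\sup_{|v|\le M}|s_\varepsilon^{-1}(v)-s_0^{-1}(v)|\to 0$ from the uniform convergence of $s_\varepsilon$ to the continuous strictly increasing $s_0$ on $[-R,R]$ (an $\varepsilon$--$\delta$ argument using uniform continuity of $s_0^{-1}$ on $[-2M,2M]$).

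The only genuinely delicate point is step (i): one must be careful that the relevant integrals of $W'_\varepsilon$ are over $[0,1]$ and over $[0,y-\lfloor y\rfloor]\subset[0,1)$, so that after the periodic extension of $W$ to $[-1,2]$ from Section \ref{quenched-section} the mollification $W*\psi_\varepsilon$ is evaluated only at points where $W$ is continuous and the convergence $W*\psi_\varepsilon\to W$ is uniform; everything else is routine real analysis. I do not anticipate any essential obstacle beyond bookkeeping with the floor function on the compact set.
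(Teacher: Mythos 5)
Your proposal is correct and follows essentially the same route as the paper: both rest on the observation that the antiderivative $\int_0^y W'_\varepsilon(z)\,dz = (\psi_\varepsilon * W)(y) - (\psi_\varepsilon * W)(0)$ converges uniformly on compacts to $W(y)$ by continuity of $W$, and then substitute into the periodic formula \eqref{s periodic}. The paper's proof is terser (it leaves the inverse-function step and the $\varepsilon$--$\delta$ bookkeeping implicit), but the underlying argument is the one you give.
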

\begin{proof}
      Since
    $W_\varepsilon'(x) = \frac{d}{dx}(\psi_\varepsilon * W)(x)$ in terms of $\psi_\varepsilon(x) = \frac{1}{\varepsilon}\psi(x/\varepsilon)$ (cf. \eqref{q_conv}), we compute
$\int_0^x W'_\varepsilon(z)dz = (\psi_\varepsilon * W)(x) - (\psi_\varepsilon * W)(0)$ whose limit is $W(x)$ as $\varepsilon\downarrow 0$, uniformly on compact sets of $\R$.
     In particular, from \eqref{s periodic}, we have
        $\lim_{\varepsilon \rightarrow 0} s_\varepsilon(x) = \int_0^x \exp \left(-4 \left\{ \lfloor y \rfloor W(1) + W(y - \lfloor y \rfloor) \right\} \right)\,dy$.
The claim holds.
\end{proof}

\vskip .1cm
{\it Step 2.}
We now observe that $s_\varepsilon(\hat x_t^\varepsilon)$ is a martingale by applying It\^{o}'s rule:  Recall $d\hat x_t^\varepsilon = 2\chi_\varepsilon W'_\varepsilon dt + \sqrt{\chi_\varepsilon}dB_t$.  Note that $d\langle \hat x^\varepsilon\rangle_t = \chi_\varepsilon(t,\hat x_t^\varepsilon)dt$, $s'_\varepsilon(x) = \exp\big(-4\int_0^x W'_\varepsilon(z)dz\big)$, and $s''_\varepsilon(x) = -4W'_\varepsilon(x)s'_\varepsilon(x)$.  Then, 
\begin{align*}
    ds_\varepsilon(\hat x_t^\varepsilon) &= s'_\varepsilon(\hat x_t^\varepsilon) \, d\hat x_t^\varepsilon + \frac{1}{2}s''_\varepsilon(\hat x_t^\varepsilon) \,d\langle \hat x^\varepsilon\rangle_t
    = \exp\left(-4\int_0^{\hat x_t^\varepsilon}W'_\varepsilon(z)dz\right)\sqrt{\chi_\varepsilon(t, \hat x_t^\varepsilon)} \,dB_t.
\end{align*}
Hence,
\begin{equation*}
    s_\varepsilon(\hat x_t^\varepsilon) -s_\varepsilon(\hat x_0^\varepsilon)= \int_0^t \exp \left(-4 \int_0^{\hat x_r^\varepsilon} W_\varepsilon'(z) \,dz \right) \,\sqrt{\chi_\varepsilon(r, \hat x_r^\varepsilon)} \,dB_r
\end{equation*}
with quadratic variation
\begin{align}
\label{quad var}
    \langle s_\varepsilon(\hat x^\varepsilon) \rangle_t = \int_0^t \exp \left(-8\int_0^{\hat x_r^\varepsilon} W_\varepsilon'(z) \,dz \right) \chi_\varepsilon(r, \hat x_r^\varepsilon) \,dr.
\end{align}
Clearly, $\langle s_\varepsilon(\hat x^\varepsilon)\rangle_t$ is a strictly increasing, continuous process.

\begin{lemma}
The quadratic variation $\langle s_\varepsilon(\hat x^\varepsilon) \rangle_t$, as $t\uparrow\infty$, 
increases to $\langle s_\varepsilon(\hat x^\varepsilon)\rangle_\infty = \infty$ a.s. with respect to the process measure $E_Z[\hat{\mathcal{P}}^z]$.
\end{lemma}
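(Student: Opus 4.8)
The plan is to rely on the continuous local‑martingale structure recorded in Step~2 together with the standard dichotomy for the terminal quadratic variation. By Step~2, $M_t := s_\varepsilon(\hat x^\varepsilon_t) - s_\varepsilon(\hat x^\varepsilon_0)$ is a continuous local martingale with $\langle M\rangle_t = \langle s_\varepsilon(\hat x^\varepsilon)\rangle_t$ strictly increasing and continuous. Since for any continuous local martingale $M$ the limit $\lim_{t\to\infty}M_t$ exists and is finite almost surely on $\{\langle M\rangle_\infty<\infty\}$ (e.g.\ Proposition~V.1.8 in \cite{revuz2004continuous}), it suffices to show that almost surely $s_\varepsilon(\hat x^\varepsilon_t)$ does not converge to a finite limit. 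It is convenient to rephrase this via the occupation‑times formula: because $d\langle s_\varepsilon(\hat x^\varepsilon)\rangle_r = (s'_\varepsilon(\hat x^\varepsilon_r))^2\,d\langle\hat x^\varepsilon\rangle_r$ with $s'_\varepsilon(x) = \exp(-4\int_0^x W'_\varepsilon(z)\,dz) > 0$, one has $\langle s_\varepsilon(\hat x^\varepsilon)\rangle_\infty = \int_{\R}(s'_\varepsilon(a))^2\,L^a_\infty(\hat x^\varepsilon)\,da$, so it is enough to prove that $\hat x^\varepsilon$ accumulates infinite local time at some level.

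The inputs I would use next are, first, that $\chi_\varepsilon(t,x)=\Phi(\rho^\varepsilon(t,x))/\rho^\varepsilon(t,x)\in[b_0,b_1]$ with $b_0>0$ (Part~1 of Lemma~\ref{lem:smoothness}), so $\hat x^\varepsilon$ is non‑explosive and $\langle\hat x^\varepsilon\rangle_t = \int_0^t\chi_\varepsilon(r,\hat x^\varepsilon_r)\,dr \ge b_0 t \uparrow\infty$; hence the martingale part $N_t := \int_0^t\sqrt{\chi_\varepsilon(r,\hat x^\varepsilon_r)}\,dB_r$ of $\hat x^\varepsilon$ has $\langle N\rangle_\infty=\infty$, so $\limsup_t N_t = +\infty$ and $\liminf_t N_t = -\infty$ a.s.; and second, the form of $s_\varepsilon$ from the periodic representation \eqref{s periodic}. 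Since the drift $b_\varepsilon(t,x)=2\chi_\varepsilon(t,x)W'_\varepsilon(x)$ is bounded on bounded $x$‑ranges, the unbounded oscillation of $N$ prevents $\hat x^\varepsilon_t$ from converging to a finite value, and together with the behaviour of $s_\varepsilon$ at $\pm\infty$ this will give that $s_\varepsilon(\hat x^\varepsilon_t)$ does not converge.

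To upgrade non‑convergence to the local‑time statement I would use recurrence of the projected diffusion: $\hat x^\varepsilon$ is the $\Z$‑lift of the uniformly elliptic torus diffusion $x^\varepsilon = \hat x^\varepsilon\bmod 1$ of Corollary~\ref{cor-tg-quenched}, which is point‑recurrent on the compact torus, so it returns to a fixed small arc at arbitrarily large times, accumulating on each excursion an amount of occupation time bounded below by uniform ellipticity. Combined with the fact that $\hat x^\varepsilon$ cannot settle near a finite point, this forces $\hat x^\varepsilon$ to return to a fixed bounded interval infinitely often, whence $L^a_\infty(\hat x^\varepsilon)=\infty$ there and the occupation‑times integral diverges. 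I expect the main obstacle to be precisely this control of the large‑time behaviour of $\hat x^\varepsilon$: one must exclude both that it converges to a finite value and that it escapes to infinity along the direction in which the weight $(s'_\varepsilon(a))^2$ decays, and this appears to need the recurrence of the compact projected diffusion together with a quantitative lower bound on the time $\hat x^\varepsilon$ spends in a bounded window, rather than a soft martingale estimate alone.
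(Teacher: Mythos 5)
Your opening move --- applying the Revuz--Yor dichotomy so that finiteness of $\langle M\rangle_\infty$ forces $M_t = s_\varepsilon(\hat x^\varepsilon_t) - s_\varepsilon(\hat x^\varepsilon_0)$ to converge, and then ruling out such convergence --- is exactly the paper's starting point, so the skeleton is right. But for the case where $\hat x^\varepsilon_t$ converges to a finite value, the paper argues much more directly than you do: it plugs into the explicit formula \eqref{quad var} and observes that the integrand $\exp\big(-8\int_0^{\hat x^\varepsilon_r}W'_\varepsilon(z)\,dz\big)\,\chi_\varepsilon(r,\hat x^\varepsilon_r)$ stays bounded away from zero once $\hat x^\varepsilon_r$ is confined to a compact set (using $\chi_\varepsilon\ge b_0>0$), so $\langle s_\varepsilon(\hat x^\varepsilon)\rangle_t$ diverges at least linearly. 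Your detour through the occupation-times identity $\langle s_\varepsilon(\hat x^\varepsilon)\rangle_\infty = \int_\R (s'_\varepsilon(a))^2\,L^a_\infty\,da$, together with the $\limsup/\liminf$ oscillation of the martingale part $N_t$ of $\hat x^\varepsilon$, adds machinery that the paper does not need, and the oscillation argument is itself not airtight: convergence of $\hat x^\varepsilon_t$ to a finite limit does not by itself make the drift integral $\int_0^t b_\varepsilon(r,\hat x^\varepsilon_r)\,dr$ converge, since $b_\varepsilon$ need not vanish at that limit.

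The genuine gap is in your final paragraph, where you propose to rule out escape to $\pm\infty$ via recurrence of the projected torus diffusion $x^\varepsilon = \hat x^\varepsilon\bmod 1$. Recurrence on $\T$ only tells you that $\hat x^\varepsilon$ visits the lattice $\Z+a$ of lifts of a fixed arc infinitely often; it gives no control forcing $\hat x^\varepsilon$ back into a \emph{fixed} bounded interval, which is what you would need to conclude $L^a_\infty=\infty$ for a single $a$. The standard counterexample is Brownian motion with a nonzero drift reduced mod $1$: it is recurrent on $\T$ but transient on $\R$, with $L^a_\infty<\infty$ for every $a$. Since $\int_0^1 W'_\varepsilon(z)\,dz=W(1)$ is a.s.\ nonzero, the periodic drift $2\chi_\varepsilon W'_\varepsilon$ has a nonzero spatial average and $s_\varepsilon$ is not onto $\R$, so this transient regime is precisely the one your argument must confront. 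You correctly flag this as the hard point, but the torus-recurrence remedy cannot close it; an additional argument would be needed, and for what it is worth the paper's own proof is also terse here, asserting directly that convergence of $s_\varepsilon(\hat x^\varepsilon_t)$ forces $\hat x^\varepsilon_t$ to have a finite limit.
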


\begin{proof} The condition $\langle s_\varepsilon(\hat x^\varepsilon)\rangle_\infty <\infty$ implies a finite limit of the martingale, $\lim_{t\rightarrow\infty}s_\varepsilon(\hat x^\varepsilon_t) - s_\varepsilon(\hat x^\varepsilon_0)\in \R$, by Proposition IV.1.26 (see also Proposition V.1.8) in \cite{revuz2004continuous}.  But, $s_\varepsilon(\cdot)$ is continuous and strictly increasing.  Therefore, finiteness of $\langle s_\varepsilon(\hat x^\varepsilon)\rangle_\infty$ implies finiteness of $\lim_{t\rightarrow\infty}\hat x^\varepsilon_t\in \R$. 

Suppose now that $\langle s_\varepsilon(\hat x^\varepsilon)\rangle_\infty<\infty$.  Note that $W'_\varepsilon$ and the coefficient $\chi_\varepsilon$ is bounded above and below.  Then, convergence of $\hat x^\varepsilon_t$ as $t\uparrow\infty$, via the formula \eqref{quad var}, would imply that $\langle s_\varepsilon(\hat x^\varepsilon)\rangle_t$ diverges to infinity, a contradiction.  Hence, we must have $\langle s_\varepsilon(\hat x^\varepsilon)\rangle_\infty = \infty$ a.s.  
\end{proof}

\vskip .1cm
{\it Step 3.}
Now, since $s_\varepsilon(\hat x_t^\varepsilon) - s_\varepsilon(\hat x_0^\varepsilon)$ is a continuous martingale, vanishing at $t=0$, by the Dambis-Dubins-Schwarz result, Theorems V.1.6 in \cite{revuz2004continuous}, we have
\begin{equation}
\label{DDS BM}
    s_\varepsilon(\hat x_t^\varepsilon) -s_\varepsilon(\hat x_0^\varepsilon)= B^\varepsilon_{ \langle s_\varepsilon(\hat x^\varepsilon) \rangle_t},
\end{equation}
for $t\geq 0$, where $B^\varepsilon$ is the `DDS' standard
Brownian motion.

Let $T_\varepsilon(t)$ be the inverse
of the quadratic variation process $\langle s_\varepsilon(\hat x^\varepsilon)\rangle_t$, for $t\geq 0$.  Note that $d\langle s_\varepsilon(\hat x^\varepsilon)\rangle_t =
\exp\big(-8\int_0^{\hat x^\varepsilon_t} W'_\varepsilon(z)dz\big) \chi_\varepsilon(t, \hat x^\varepsilon_t) dt$ and
$\langle s(\hat x^\varepsilon)\rangle_{T_\varepsilon(t)}' T'_\varepsilon(t) = 1$.  Then,
    via \eqref{DDS BM}, 
		we have
\begin{align}
T_\varepsilon(t)    &= \int_0^t \exp\left(8\int_0^{s_\varepsilon^{-1}(s_\varepsilon(\hat x^\varepsilon_0)+B^\varepsilon_r)} W_\varepsilon'(z) \,dz \right) \frac{1}{\chi_\varepsilon(r, s_\varepsilon^{-1}(s_\varepsilon(\hat x^\varepsilon_0)+B^\varepsilon_r))} \,dr,
    \label{eq:T_eps_timechg}
\end{align}
a strictly increasing and onto function on $[0,\infty)$.

\vskip .1cm
{\it Step 4.}
We now consider the case $Z_X=z\in \R$ is a constant.

\begin{lemma}
    The process $\hat x_t^{\varepsilon,z} = s_\varepsilon^{-1}\big(s_\varepsilon(z) + B^\varepsilon(T_\varepsilon^{-1}(t))\big)$ on $\R$ 
    converges in distribution to the process $\hat x_t^{0,z}= s_0^{-1}\big(s_0(z)+B^0(T_0^{-1}(t))\big)$, defined near \eqref{prop:s0_lim1}, 
    in probability with respect to $\PB_W$.
    \label{prop:diff_limit_on_R}
\end{lemma}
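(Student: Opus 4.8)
The plan is to pass to the limit $\varepsilon\downarrow 0$ in the It\^o--McKean representation, using $s_\varepsilon\to s_0$ (Lemma \ref{prop:s0_lim}) and $\rho^\varepsilon\to\rho^0$ (Theorem \ref{singular hyd thm}), after first realizing all the processes $\hat x^{\varepsilon,z}$, $\varepsilon\geq 0$, through a single Brownian motion. The first point is that, conditionally on a realization of $W$, the law of $\hat x^{\varepsilon,z}$ is the law of the diffusion generated by $\tfrac12\chi_\varepsilon\partial_{xx}+2\chi_\varepsilon W_\varepsilon'\partial_x$ started at $z$, which is unique in law since its coefficients are bounded, continuous and non-degenerate (cf. the uniqueness citations in the proof of Theorem \ref{thm:main_2}); likewise $\hat x^{0,z}$ is the conditionally-$W$ diffusion with coefficients $\chi_0,W'$. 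By Steps 1--4, this conditional law is the law of $s_\varepsilon^{-1}\big(s_\varepsilon(z)+\beta(T_\varepsilon^{-1}(\cdot))\big)$ where $\beta$ is a single standard Brownian motion on an auxiliary space $(\Omega',\mathcal F',\mathbb P')$ independent of $W$ and $T_\varepsilon$ is given by \eqref{eq:T_eps_timechg} with $\beta$ replacing $B^\varepsilon$; indeed the DDS Brownian motion produced in Step 3 is, conditionally on $W$, a standard Brownian motion, so the substitution changes no law. Consequently it suffices to prove that along suitable subsequences, for $\PB_W$-a.e.\ $W$, one has $s_\varepsilon^{-1}\big(s_\varepsilon(z)+\beta(T_\varepsilon^{-1}(\cdot))\big)\to s_0^{-1}\big(s_0(z)+\beta(T_0^{-1}(\cdot))\big)$ $\mathbb P'$-a.s.\ uniformly on $[0,T]$; the asserted convergence in distribution in $\PB_W$-probability then follows from the subsequence characterization of convergence in probability applied to $W\mapsto\mathbb E'[F(\hat x^{\varepsilon,z})]$ for $F$ bounded Lipschitz on $C([0,T],\R)$, together with separability of $C([0,T],\R)$.

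Fix a subsequence along which $\rho^\varepsilon\to\rho^0$ uniformly on $[0,T]\times\T$, $\PB_W$-a.s.\ (Theorem \ref{singular hyd thm}); for such $W$, $\chi_\varepsilon\to\chi_0$ uniformly on $[0,T]\times\T$, all with values in $[b_0,b_1]$ by Lemma \ref{lem:smoothness}(1), and $\chi_0$ is continuous. The deterministic facts $s_\varepsilon\to s_0$, $s_\varepsilon^{-1}\to s_0^{-1}$ locally uniformly on $\R$, and $\int_0^{\cdot}W_\varepsilon'(z)\,dz=(\psi_\varepsilon*W)(\cdot)-(\psi_\varepsilon*W)(0)\to W(\cdot)$ locally uniformly (the periodic-increment extension of $W$), hold for every continuous $W$ by the proof of Lemma \ref{prop:s0_lim}. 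Crucially, only the \emph{antiderivative} $\int_0^{\cdot}W_\varepsilon'$, equivalently $\psi_\varepsilon*W$, enters these formulas, and it is bounded on compacts \emph{uniformly in $\varepsilon$} even though $\|W_\varepsilon'\|_\infty$ may diverge as $\varepsilon\downarrow 0$. Now set $Y_r^\varepsilon:=s_\varepsilon^{-1}(s_\varepsilon(z)+\beta_r)$ and $Y_r^0:=s_0^{-1}(s_0(z)+\beta_r)$; since $\beta$ is continuous on compact time intervals and $s_\varepsilon(z)\to s_0(z)$, the arguments $s_\varepsilon(z)+\beta_r$ stay in a fixed compact set, so $Y^\varepsilon\to Y^0$ uniformly on any compact time interval by composition of locally uniform convergences. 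It follows that $\int_0^{Y_r^\varepsilon}W_\varepsilon'(z)\,dz\to W(Y_r^0)$ and $\chi_\varepsilon(r,Y_r^\varepsilon)\to\chi_0(r,Y_r^0)$, both uniformly on compact time intervals, where for the first limit one writes $\big|\int_0^{Y_r^\varepsilon}W_\varepsilon'-W(Y_r^0)\big|\leq\big|\int_0^{Y_r^\varepsilon}W_\varepsilon'-W(Y_r^\varepsilon)\big|+|W(Y_r^\varepsilon)-W(Y_r^0)|$ and uses uniform convergence on the fixed compact together with continuity of $W$.

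Hence the integrand in \eqref{eq:T_eps_timechg} converges uniformly on compact time intervals to the integrand in \eqref{prop:s0_lim1}, so $T_\varepsilon\to T_0$ uniformly on $[0,T']$ for each $T'$. Moreover $T_\varepsilon'$ is bounded above and below uniformly in $\varepsilon$ (the exponential factor $\exp\big(8\int_0^{Y_r^\varepsilon}W_\varepsilon'\big)$ is uniformly bounded away from $0$ and $\infty$ by the previous paragraph, and $\chi_\varepsilon\in[b_0,b_1]$), so $T_\varepsilon$ is uniformly bi-Lipschitz; combined with $T_\varepsilon\to T_0$ uniformly this gives $T_\varepsilon^{-1}\to T_0^{-1}$ uniformly on $[0,T]$, with $T_\varepsilon^{-1}([0,T])\subset[0,T/b']$ for a fixed $b'>0$. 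Finally, $\hat x_t^{\varepsilon,z}=Y^\varepsilon(T_\varepsilon^{-1}(t))\to Y^0(T_0^{-1}(t))=\hat x_t^{0,z}$ uniformly on $[0,T]$, by composing $Y^\varepsilon\to Y^0$ uniformly on $[0,T/b']$ with $T_\varepsilon^{-1}\to T_0^{-1}$ uniformly on $[0,T]$ and using continuity of $Y^0$. This is $\mathbb P'$-a.s., for $\PB_W$-a.e.\ $W$ along the chosen subsequence, which is what was needed.

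The main obstacle is conceptual rather than computational: a priori the DDS Brownian motion $B^\varepsilon$ depends on $\varepsilon$ and is carried along with the solution, so "uniform convergence of the representations'' only acquires meaning after recasting every $\hat x^{\varepsilon,z}$ through one common Brownian motion $\beta$, which is why weak uniqueness of the limit diffusion and the conditional-on-$W$ viewpoint are essential. The other point requiring care, and the reason the argument goes through at all, is that $\|W_\varepsilon'\|_\infty$ generally blows up as $\varepsilon\downarrow 0$, so every estimate must be expressed through the uniformly bounded antiderivative $\psi_\varepsilon*W\to W$ and never through $W_\varepsilon'$ pointwise.
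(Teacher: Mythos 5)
Your proposal is correct and follows essentially the same route as the paper: pass to the It\^o--McKean representation, realize each $\hat x^{\varepsilon,z}$ through a single auxiliary Brownian motion so that the problem becomes one of locally uniform convergence of scale and time-change maps, reduce the statement of convergence in $\PB_W$-probability to $\PB_W$-a.s.\ subsequential convergence, and then verify $s_\varepsilon\to s_0$, $s_\varepsilon^{-1}\to s_0^{-1}$, $T_\varepsilon\to T_0$, $T_\varepsilon^{-1}\to T_0^{-1}$ locally uniformly using Lemma \ref{prop:s0_lim} and the uniform convergence $\rho^\varepsilon\to\rho^0$ supplied by Theorem \ref{singular hyd thm}. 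The only cosmetic difference is in the treatment of $\chi_\varepsilon(r,Y_r^\varepsilon)\to\chi_0(r,Y_r^0)$: you argue via uniform convergence of $\chi_\varepsilon\to\chi_0$ together with uniform continuity of $\chi_0$ on the compact $[0,T]\times\T$, whereas the paper splits the difference using the Lipschitz constant of $\chi_{\bar\varepsilon_k}$ and the spatial H\"older seminorm of $\rho^{\bar\varepsilon_k}$ drawn from the $\LL_T^{\alpha-1}$ convergence; both variants are valid and rest on the same inputs.
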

\begin{proof}
We prove convergence in $\PB_W$-probability of $\hat x_t^{\varepsilon,z}$ to $\hat x_t^{0,z}$ in distribution
by showing that any subsequence $\hat x_t^{\varepsilon_n}$ has a further subsequence converging in distribution to $\hat x_t^0$, almost surely with respect to $\PB_W$.   In other words, let $F:C([0,T])\rightarrow \R$ be a bounded continuous function and $\PB_B$ be the law of a Brownian motion $B$.  Since, given the realization $W$, the laws of $x^\varepsilon$ and $x^0$ can be written in terms of a Brownian motion, we show that any subsequence of $E_{\PB_{B^\varepsilon}}[F(x^\varepsilon)]=E_{\PB_B}[F(x^\varepsilon)]$ has a further subsequence which converges to $E_{\PB_{B^0}}[F(x^0)]=E_{\PB_B}[F(x^0)],$ a.s. $\PB_W$.

We may also view $T_\varepsilon$ and $T_0$ as functions of a Brownian path.
In this sense, it suffices to show with respect to a realization $B$ a.s. $\PB_B$ that there exists a subsequence $\bar{\varepsilon}_k := \varepsilon_{n_k}$ such that $T_{\bar{\varepsilon}_k} \rightarrow T_0$  uniformly on $[0, T^\#]$, a.s.-$\PB_W$ for each $T^\#>0$. For, if this holds, then there is a common subsequence $\bar{\varepsilon}_k$ such that $T_{\bar{\varepsilon}_k} \rightarrow T_0$  uniformly on $[0, N]$ for each $N\in \N$ a.s.-$\PB_W$.  Then, it follows $T_{\bar{\varepsilon}_k}^{-1} \rightarrow T_0^{-1}$ uniformly on $[0, T]$, a.s.-$\PB_W$. Combining with the continuity of the Brownian path $B$ and the uniform convergence of $s_\varepsilon \rightarrow s_0$ and $s_\varepsilon^{-1} \rightarrow s_0^{-1}$ on compact subsets of $\R$ (Lemma \ref{prop:s0_lim}), we may show the desired result by bounded convergence theorem,
    \begin{align*}
        \lim_{k \rightarrow \infty} E_{\PB_B}[F(\hat x^{\bar{\varepsilon}_k})]&= \lim_{k \rightarrow \infty} E_{\PB_B}[F(s_{\bar{\varepsilon}_k}^{-1}(s_{\bar{\varepsilon}_k}(z)+B(T_{\bar{\varepsilon}_k}^{-1}(\cdot))))] \\
        &= E_{\PB_B}[F(s_0^{-1}(s_0(z)+B(T_0^{-1}(\cdot))))] = E_{\PB_B}[F(\hat x^0)], 
         \PB_W-\text{a.s.}
    \end{align*}

    To show there is a sequence $\bar{\varepsilon}_k$ such that $T_{\bar{\varepsilon}_k}$ converges to $T_0$ uniformly on $[0, T^\#]$, notice that by the continuity of $B$, the compactness of $[0, T^\#]$, and the uniform convergence of $s_\varepsilon$ and $s^{-1}_\varepsilon$ on compact subsets, the first factor in the integrand of $T_\varepsilon(t)$ in \eqref{eq:T_eps_timechg} converges uniformly on $[0, T^\#]$:  That is, by periodicity of $W'_\varepsilon$,
    \begin{equation*}
        \lim_{\varepsilon \rightarrow 0} \int_0^{s_\varepsilon^{-1}(s_\varepsilon(z)+B_r)} W_\varepsilon'(z) \,dz = \lfloor s_0^{-1}(s_0(z)+B_r) \rfloor W(1) + W(s_0^{-1}(s_0(z)+B_r) - \lfloor s_0^{-1}(s_0(z)+B_r) \rfloor).
    \end{equation*}
 In addition, since $\Phi(u)/u$ is smooth and bounded below and above (cf. Lemma \ref{lem:smoothness} with time horizon $T^\#$), the uniform convergence of a subsequence $\chi_{\bar{\varepsilon}_k}$ to $\chi_0$ and therefore $1/\chi_{\bar{\varepsilon}_k}$ to $1/\chi_{0}$ 
    follows from 
    $\rho^{\varepsilon} \rightarrow \rho^0$ in $\LL_{T^\#}^{\alpha-1}$ where $\alpha \in (13/9, 3/2)$ in probability w.r.t $\PB_W$ globally in time.
			
			Indeed, first, by Lemma \ref{lem:smoothness}, given the assumption $\rho_0\in C^{1+\beta}(\T)$ for $\beta>0$, we are assured that $\rho^\varepsilon\in \LL_{T^\#}^{\alpha -1}$ is the gradient of the unique para-controlled solution of \eqref{u-equation}.  Second, 
       recalling the definition of $\LL_{T^\#}^{\alpha-1}$, the convergence in probability in Theorem \ref{singular hyd thm} (with time horizon $T^\#$) allows to choose a subsequence $\bar{\varepsilon}_k$ so that $\rho^{\bar{\varepsilon}_k} \rightarrow \rho^0$ uniformly in $[0,T^\#]$ a.s.-$\PB_W$.
    Then, $|~ \chi_{\bar{\varepsilon}_k}(t, s_{\bar{\varepsilon}_k}^{-1}(s_{\bar{\varepsilon}_k}(z)+B_t)) - \chi_0(t, s_0^{-1}(s_0(z)+B_t)) ~|$ is bounded above by
    \begin{equation*}
         |~ \chi_{\bar{\varepsilon}_k}(t, s_{\bar{\varepsilon}_k}^{-1}(s_{\bar{\varepsilon}_k}(z)+B_t)) - \chi_{\bar{\varepsilon}_k}(t, s_0^{-1}(s_0(z)+B_t)) ~| ~+~ |~ \chi_{\bar{\varepsilon}_k}(t, s_0^{-1}(s_0(z)+B_t)) - \chi_0(t, s_0^{-1}(s_0(z)+B_t)) ~|.
    \end{equation*}

    Observe that the second term vanishes in the limit since $\chi_{\bar{\varepsilon}_k} \rightarrow \chi_0$ uniformly. For the first term, note that the function $\Phi(\rho)/\rho$ is Lipschitz for $\rho\in [\rho_-, \rho_+]$ since $\Phi$ is continuously differentiable.
    Therefore, we have
    \begin{align*}
      &  |~ \chi_{\bar{\varepsilon}_k}(t, s_{\bar{\varepsilon}_k}^{-1}(s_{\bar{\varepsilon}_k}(z)+B_t)) - \chi_{\bar{\varepsilon}_k}(t, s_0^{-1}(s_0(z)+B_t)) ~| \\
      &\quad\quad\leq~ \|\chi_{\bar{\varepsilon}_k}\|_{Lip}
        ~| \rho^{\bar{\varepsilon}_k}(t, s_{\bar{\varepsilon}_k}^{-1}(s_{\bar{\varepsilon}_k}(z)+B_t)) - \rho^{\bar{\varepsilon}_k}(t, s_0^{-1}(s_0(z)+B_t)) |.
    \end{align*}
 We may bound the second factor on the right hand side as follows:
    \begin{align*}
       & | \rho^{\bar{\varepsilon}_k}(t, s_{\bar{\varepsilon}_k}^{-1}(s_{\bar{\varepsilon}_k}(z)+B_t)) - \rho^{\bar{\varepsilon}_k}(t, s_0^{-1}(s_0(z)+B_t)) |\\
        &\quad\quad\leq~
        \,[\rho^{\bar \varepsilon}]_{\alpha-1} \sup_{t \in [0, T]} |  s_{\bar{\varepsilon}_k}^{-1}(s_{\bar{\varepsilon}_k}(z)+B_t) - s_0^{-1}(s_0(z)+B_t) |^{\beta}
    \end{align*}
    where 
    $[\rho^{\bar\varepsilon}]_{\alpha -1}$ is the H\"{o}lder constant of $\rho^{\bar\varepsilon}$. Since
    $\rho^{\bar{\varepsilon}_k}$ converges to $\rho^0$ in $\mathcal{L}^{\alpha-1}_{T^\#}=C_{T^\#}C^{\alpha -1}\cap C_{T^\#}^{(\alpha -1)/2}L^\infty$ (note $\hat C^{\alpha -1} = C^{\alpha -1}$ as $\alpha \in (13/9, 3/2)$; see remark after Lemma \ref{lem:smoothness}), 
 we have $[\rho^{\bar\varepsilon}]_{\alpha -1}$ converges to $[\rho^0]_{\alpha -1}<\infty$. Then, given $s_\varepsilon$, $s_\varepsilon^{-1}$ converges to $s_0$, $s_0^{-1}$ uniformly on compact subsets of $\R$ (Lemma \ref{prop:s0_lim}), the right hand side vanishes. 
\end{proof}

\noindent 
{\bf Proof of Theorem \ref{thm singular sde}.}
Consider the definition $x^{\varepsilon, Z_X}_t = \hat x^{\varepsilon, Z_X}_t - \lfloor \hat x^{\varepsilon, Z_X}_t\rfloor$.  Note that the law of $x^\varepsilon_0$ is the law of $Z_X$ for each $\varepsilon\geq 0$.  Note also that $\lfloor x\rfloor:\R\rightarrow \T$ is a continuous function.  Recall the equivalence of a sequence converging in $\PB_W$-probability with every subsequence having a further subsequence converging $\PB_W$-a.s.

We have $E[F(x^{\varepsilon, Z_X})] = E_{Z_X}[E_{\PB_B}[F(\hat x^{\varepsilon,z} -\lfloor x^{\varepsilon,z}\rfloor)]]$ for a bounded, continuous function $F:C([0,T], \T)\rightarrow\infty$.
    The inner expectation converges to $G(z;W)=E_{\PB_B}[F(\hat x^{0,z} - \lfloor x^{0,z}\rfloor)]$ in $\PB_W$-probability by Lemma \ref{prop:diff_limit_on_R}.  Since $G$ is bounded, by the equivalence in terms of subsequences, by bounded convergence, we conclude the full expectation converges to $E_{Z_X}[G(z;W)]$ in $\PB_W$-probability.  Hence, the desired quenched limit is found.  As remarked after Theorem \ref{thm singular sde}, the annealed part follows as a consequence.
\qed

%%%%%%%%%%%%%%%%%%%%%%%%%%%%%%%%%%%%%%%%%%%

\section{Completion of the proof of Lemma \ref{replacement-lemma}}
We supply the needed estimates of the three terms on the right-hand side of \eqref{eq:lem:repl:split} in the following three subsections to finish the proof of Lemma \ref{replacement-lemma}.  The scheme has some similarity with that in \cite{JLS}, \cite{JLS1}, although there are
many differences here given the context of the inhomogeneous environment.  In this regard, estimates from \cite{LPSX} will be useful.

\subsection{Local 1-block}
\label{sec:loc_1b}

In the proof of Lemma \ref{replacement-lemma}, the local 1-block deals with the first term in \eqref{eq:lem:repl:split}. In this term, $h(\eta_t(X_t^N))$ is replaced by an average quantity $H(\eta_t^\ell(X_t^N))$ indexed by variables in the local $\ell$-neighborhood 
of $X_t^N \in \T_N$. 
We use a Rayleigh-type estimation of a
variational eigenvalue expression derived from a Feynman-Kac bound.  Dirichlet forms and spectral
gap bounds play an important role.  

\subsubsection{Dirichlet forms}
\label{DF section}
Recall the generator $\mathcal L_N$, cf.\,\eqref{eq:gen}, and the
invariant measure $\nu^N$ (cf. \eqref{nu N def}, where
$\phi_{k,N}$ is taken so that $\max_k \phi_{k,N}=1$.  As $\nu^N$ is
not reversible with respect to $\mathcal L_N$, we will work with $\mathcal S_N = (\mathcal{L}_N + \mathcal{L}^*_N)/2$, the
symmetric part of $\mathcal L_N$:
\begin{align*}
\mathcal{S}_N f(x,\eta) &= \dfrac12 \, 
\sum_{k\neq x}\sum_\pm
 g(\eta(k))
\mf p_{k,\pm}^N
\big(f(x,\eta^{k,k\pm 1}) - f(x,\eta) \big)\\
&\quad +\frac{1}{2}\sum_\pm \Big\{
g(\eta(x))\frac{\eta(x)-1}{\eta(x)}
\mf p_{x,\pm}^N
\big(f(x,\eta^{x,x\pm 1}) - f(x,\eta) \big)\\
&\quad +\frac{1}{2}\frac{g(\eta(x))}{\eta(x)}
\mf p_{x,\pm}^N
\big(f(x\pm 1,\eta^{x,x\pm 1}) - f(x, \eta) \big)
\Big\}.
\end{align*}
where
$\mf p_{k,\pm}^N:=\big( \dfrac12 \pm \dfrac{\alpha_k^N}{ N}\big)
+
\dfrac{\phi_{k+1,N}}{\phi_{k,N}}
\big( \dfrac12 \mp \dfrac{\alpha_{k+1}^N}{ N}\big)$.

Then, $\nu^N$ is reversible with respect to the generator $\mathcal{S}_N$.  
Recall $\nu^{env,x}$ (cf. near \eqref{nu N def}).  Similarly, $\nu^{env,x}$ is reversible under $\mathcal{S}^{env,x}_N = (\mathcal{L}^{env,x}_N + \mathcal{L}^{*,env,x}_N)/2$, dropping the terms in $\mathcal{S}_N$ where the tagged particle moves.

The Dirichlet form $\DD(f)$ with respect to $\mathcal{S}_N$ is
\begin{align}
E_{\nu^N}
\left[ f(-\mathcal{S}_N f) \right]
&=
\dfrac12 \, 
 \sum_{k\neq x}E_{\nu^N} 
\Big[
g(\eta(k))
\mf p_{k,+}^N
\big(f(x,\eta^{k,k+1}) - f(x,\eta) \big)^2
\Big]\nonumber\\
&+
\dfrac12 \, 
E_{\nu^N}
 \Big[
g(\eta(x))\frac{\eta(x)-1}{\eta(x)}
\mf p_{x,+}^N
\big(f(x,\eta^{x,x+1}) - f(x,\eta) \big)^2
\Big]\nonumber\\
&+
\dfrac12 \, 
E_{\nu^N}
 \Big[
\frac{g(\eta(x))}{\eta(x)}
\mf p_{x,+}^N
\big(f(x+1,\eta^{x,x+1}) - f(x,\eta) \big)^2
\Big],
\label{two star reversible}
\end{align}
while the Dirichlet form $\DD^{env,x}(f)$ with respect to $\mathcal{S}^{env,x}_N$ equals
\begin{align*}
E_{\nu^{env,x}}
\left[ f(-\mathcal{S}^{env, x}_N f) \right]
&=
\dfrac12 \, 
 \sum_{k\neq x}E_{\nu^{env,x}} 
\Big[
g(\eta(k))
\mf p_{k,+}^N
\big(f(x,\eta^{k,k+1}) - f(x,\eta) \big)^2
\Big]\nonumber\\
&+
\dfrac12 \, 
E_{\nu^{env,x}}
\Big[
g(\eta(x))\frac{\eta(x)-1}{\eta(x)}
\mf p_{x,+}^N
\big(f(x,\eta^{x,x+1}) - f(x,\eta) \big)^2
\Big].
\end{align*}

We comment if $f$ did not depend on $\eta(x-1), \eta(x), \eta(x+1)$ then $\DD^{x,env}(f) = \langle f, -Lf\rangle_{\Pam_N}$, reducing to the Dirichlet form with respect to the standard process (cf. \eqref{eqn: generator L}).

\subsubsection{Spectral gap bound}
\label{spec_gap section}
For $k\in\T_N$ and $l\geq 1$, denote $\L_{k,l}=\left\{k-l,k-l+1,\ldots,k+l\right\}\subset \T_N$. Consider the two processes restricted to $\L_{k,l}$ generated by
$S_{k,l}$ and $S^{env}_{k,l}$ where
\begin{align*} 
S_{k,l}f(\eta)
&=
\dfrac12 \, \sum_{x,x+1\in \L_{k,l}}
\Big\{
g(\eta(x))
\mf p_{x,+}^N
\big(f(\eta^{x,x+1}) - f(\eta) \big) 
+
g(\eta(x+1))
\mf p_{x+1,-}^N
\big(f(\eta^{x+1,x}) - f(\eta) \big)
\Big\}.\nonumber\\
S^{env}_{k,l}f(\eta)
&=
\dfrac12 \, \sum_{\stackrel{x\in \L_{k,l}\setminus\{k\}}{x+1\in \L_{k,l}}}\Big\{
g(\eta(x))
\mf p_{x,+}^N
\big(f(\eta^{x,x +1}) - f(\eta) \big) 
 + g(\eta(x+1))\mf p_{x+1,-}^N \big(f(\eta^{x+1, x})-f(\eta)\big)\Big\}\nonumber\\
&\quad\quad +\dfrac12 \sum_{\pm}
g(\eta(k))\frac{\eta(k)-1}{\eta(k)}
\mf p_{k,\pm}^N
\big(f(\eta^{k,k\pm 1}) - f(\eta) \big).
\end{align*}
Let also $S^0_{k,l}$ and $S^{0,env}_{k,l}$ be these generators when $\alpha^N_\cdot\equiv 0$.

Let $\Omega_{k,l} = \N_0^{\L_{k,l}}$ be the state space of
configurations restricted on sites $\L_{k,l}$. For each
$\eta\in \Omega_{k,l}$, define
$\kappa_{k,l} (\eta) = \prod_{x\in \L_{k,l}}
\Pb_{\phi_{x,N}}(\eta(x))$, that is, $\kappa_{k,l}$ is the product
measure $\Pam_N$ restricted to $\Omega_{k,l}$.   Define also $\kappa_{k,l}^{env}(\eta)=\frac{\eta(k)}{\rho_{k,N}}\kappa_{k,l}(\eta)$, the measure $\nu^{env, k}$ (which conditions $X=k$), restricted to $\eta\in \Omega_{k,l}$.
Define also $\kappa^0_{k,l}$ and $\kappa^{0, env}_{k,l}$ to be the measures $\kappa_{k,l}$ and $\kappa^{env}_{k,l}$ when $\alpha^N_\cdot\equiv 0$.    
Let $\rho^*\equiv \rho_{k,N}$ when $\alpha^N_\cdot = 0$ (corresponding to $\phi=1$).

Let $\Omega_{k,l,j} = \{\eta \in \Omega_{k,l}: \sum_{x\in \L_{k,l} }\eta(x) = j\}$ be the state space
of configurations with exactly $j$ particles on the sites $\L_{k,l}$.
Let $\kappa_{k,l,j}$ and $\kappa_{k,l,j}^{env}$ be the associated reversible canonical measures
obtained by conditioning $\kappa_{k,l}$ and $\kappa_{k,l}^{env}$ on $\Omega_{k,l}$, and $\kappa^0_{k,l,j}$ and $\kappa^{0,env}_{k,l,j}$ these measures when $\alpha^N_\cdot\equiv 0$.

The corresponding Dirichlet forms $\mathcal{D}_{k,l}$, $\mathcal{D}_{k,l,j}$, and $\mathcal{D}^{env}_{k,l}$, $\mathcal{D}^{env}_{k,l,j}$ are given respectively by
\begin{align*} 
E_{\bar\kappa}
\left[ f(-S_{k,l} f) \right]
&=
\dfrac12 \sum_{x,x+1\in \L_{k,l}}
 E_{\bar\kappa}
\Big[
g(\eta(x))
\mf p_{x,+}^N
\big(f(\eta^{x,x+1}) - f(\eta) \big)^2
\Big], 
\end{align*}
and
\begin{align*}
E_{\bar\kappa^{env}}
\left[ f(-S^{env}_{k,l} f) \right]
&=
\dfrac12 \sum_{\stackrel{x\in \L_{k,l}\setminus\{k\}}{x+1\in \L_{k,l}}}
 E_{\bar\kappa^{env}}
\Big[
g(\eta(x))
\mf p_{x,+}^N
\big(f(\eta^{x,x+1}) - f(\eta) \big)^2
\Big] \nonumber\\
&\ \ \ \ + \dfrac12 E_{\bar\kappa^{env}}
\Big[
g(\eta(k))\frac{\eta(k)-1}{\eta(k)}
\mf p_{k,+}^N
\big(f(\eta^{k,k+1}) - f(\eta) \big)^2
\Big],  \nonumber
\end{align*}
with $\bar \kappa$ equal to $\kappa_{k,l}$ and $\kappa_{k,l,j}$, and $\bar\kappa^{env}$ equal to $\kappa^{env}_{k,l}$ and $\kappa^{env}_{k,l,j}$.

We will obtain spectral gap estimates for the localized inhomogeneous processes by comparison with the spectral gap for the translation-invariant localized process generated by $S^0_{k,l}$.
For $j\geq 1$, let $b_{l,j}$ and $b^{env}_{l,j}$ (which does not depend on the location $k$) be the reciprocals of the spectral gaps of $-S^0_{k,l}$ and $-S^{0,env}_{k,l}$ on $\Omega_{k,l,j}$ (cf.\,p.~374, \cite{KL}):
\begin{equation} \label {def of b lj}
b_{l,j}:=
\inf_{f} 
\dfrac
 {E_{\kappa^0_{k,l,j}}[ f(-S^0_{k,l}f)]}
 {{\rm Var}_{\kappa^0_{k,l,j}} (f)} \ \ {\rm and \ \ }b^{env}_{l,j}:=
\inf_{f} 
\dfrac
 {E_{\kappa^{0,env}_{k,l,j}}[ f(-S^{0,env}_{k,l}f)]}
 {{\rm Var}_{\kappa^{0,env}_{k,l,j}} (f)}.
\end{equation}
As $\Omega_{k,l,j}$ is a finite space, the infimum in the above formula is taken over all functions $f$ from $\Omega_{k,l,j}$ to $\R$. For all $l, j\geq 1$, we have $b_{l,j}>0$. 
By Lemma 6.2 in \cite{JLS} (or Lemma 3.1 in \cite{JLS1}), as $g_*\leq g(n)/n\leq g^*$ is bounded above and below by assumptions (LG) and (M), we have
\begin{align*}
b^{env}_{l,j} \leq (g^*g_*^{-1})^2b_{l,j}.
\end{align*}

\subsubsection{Inhomogeneous comparison}
We now state estimates which quantify expressions with respect to the inhomogeneous $\kappa^{env}_{k,l,j}$ to those under $\nu^{0,env}_{l,k,j}$.  The following lemma is the counterpart of Lemma 6.1 in \cite{LPSX} when the measures are in terms of the `environment' process where the tagged particle location is fixed.

Recall $\mf p_{k,+}^N$ in Section \ref{DF section}.
Let $ r_{k,l,N}^{-1} := \min_{x\in \L_{k,l}} \left\{ \mf p_{x,+}^N \right\}$.
\begin{lemma} \label {spectral gap ln k}
We have the following estimates:
\begin{enumerate}
\item Uniform bound:  For all $\eta \in \Omega_{k,l,j}$, we have
\begin{equation*} 
\left(\dfrac{\phi_{{\rm min},k,l}}{\phi_{{\rm max},k,l}} \right)^{2j}
\leq
\dfrac{\kappa^{env}_{k,l,j} (\eta) }{\kappa^{0,env}_{k,l,j}(\eta)}
\leq
\left(\dfrac{\phi_{{\rm max},k,l}}{\phi_{{\rm min},k,l}} \right)^{2j}
\end{equation*}
where 
$\phi_{{\rm min},k,l} = \min_{x\in\L_{k,l}} \phi_{x,N}$ and $\phi_{{\rm max},k,l} = \max_{x\in\L_{k,l}} \phi_{x,N}$.
\item Poincar\'e inequality:  We have for $j\geq 1$,
\begin{equation*}
{\rm Var}_{\kappa^{env}_{k,l,j}} (f)
\leq
 C_{k,l,j} 
 E_{\kappa^{env}_{k,l,j}}
 \left[ f(-S^{env}_{k,l}f) \right]
\end{equation*}
where 
$
 C_{k,l,j}
:=
(g^*g_*^{-1})^2b_{l,j}^{-1} r_{k,l,N}  \left(\dfrac{\phi_{{\rm max},k,l}}{\phi_{{\rm min},k,l}} \right)^{4j}
$ bounds the inverse of the spectral gap of $-S^{env}_{k,l}$ on $\Omega_{k,l,j}$.

\item For each $l$ fixed and $C=C(l)>0$, we have
$$\lim_{N\uparrow\infty} \sup_{1\leq k\leq N}\sup_{j\leq C\log(N)} \left(\dfrac{\phi_{{\rm max},k,l}}{\phi_{{\rm min},k,l}}\right)^{4j} = 1, 
\quad
\lim_{N\uparrow\infty} \sup_{1\leq k\leq N} r_{k,l,N} = 1$$
and hence, given \eqref{spec_gap_condition}, for fixed $l$ and $C=C(l)>0$, we have 
$$\lim_{N\rightarrow\infty}\sup_{1\leq k\leq N} \sup_{1\leq j\leq C\log(N)}\frac{1}{N}C_{k,l,j}=0.$$
\end{enumerate}
\end{lemma}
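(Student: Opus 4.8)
\textit{Overview.} The plan is to establish the three items in order, in each case reducing the inhomogeneous localized environment process to the translation-invariant one (where the spectral gap bound \eqref{spec_gap_condition} applies), exactly in the spirit of Lemma 6.1 in \cite{LPSX} and Lemma 6.2 in \cite{JLS} but carried out for the ``environment'' (tagged-particle-fixed) dynamics.

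\textit{Item (1): the uniform bound.} First I would write out the Radon--Nikodym derivative explicitly. Set $W_0(\eta)=\prod_{x\in\L_{k,l}}1/g(\eta(x))!$. Since $\Pb_\phi(n)\propto \phi^n/g(n)!$, and since the canonical measure of the $\alpha^N_\cdot\equiv0$ system does not depend on the value of the (constant) fugacity, on $\Omega_{k,l,j}$ one has $\kappa^{0,env}_{k,l,j}(\eta)\propto \eta(k)W_0(\eta)$ and $\kappa^{env}_{k,l,j}(\eta)\propto \eta(k)\big(\prod_{x\in\L_{k,l}}\phi_{x,N}^{\eta(x)}\big)W_0(\eta)$, where the $\eta(k)/\rho_{k,N}$ and $1/\ZZ(\phi_{x,N})$ normalizations are absorbed into the partition functions. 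Dividing and renormalizing gives $\kappa^{env}_{k,l,j}(\eta)/\kappa^{0,env}_{k,l,j}(\eta)=\big(\prod_x\phi_{x,N}^{\eta(x)}\big)\big/\,E_{\kappa^{0,env}_{k,l,j}}\!\big[\prod_x\phi_{x,N}^{\eta(x)}\big]$. On $\Omega_{k,l,j}$ we have $\sum_{x\in\L_{k,l}}\eta(x)=j$, so the numerator, and hence also the denominator (a convex combination of such numerators), lies in $[\phi_{{\rm min},k,l}^{\,j},\phi_{{\rm max},k,l}^{\,j}]$; therefore the ratio lies in $[(\phi_{{\rm min},k,l}/\phi_{{\rm max},k,l})^{j},(\phi_{{\rm max},k,l}/\phi_{{\rm min},k,l})^{j}]$, which is contained in the claimed interval since $\phi_{{\rm max},k,l}/\phi_{{\rm min},k,l}\ge1$.

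\textit{Item (2): the Poincar\'e inequality.} Write $A:=(\phi_{{\rm max},k,l}/\phi_{{\rm min},k,l})^{2j}$, so that Item (1) reads $A^{-1}\kappa^{0,env}_{k,l,j}\le\kappa^{env}_{k,l,j}\le A\,\kappa^{0,env}_{k,l,j}$. I would chain three estimates. (i) By the variational identity ${\rm Var}_{\kappa^{env}_{k,l,j}}(f)\le E_{\kappa^{env}_{k,l,j}}[(f-c)^2]$ applied with $c=E_{\kappa^{0,env}_{k,l,j}}[f]$, together with $\kappa^{env}_{k,l,j}\le A\kappa^{0,env}_{k,l,j}$, one gets ${\rm Var}_{\kappa^{env}_{k,l,j}}(f)\le A\,{\rm Var}_{\kappa^{0,env}_{k,l,j}}(f)$. (ii) The definition of $b^{env}_{l,j}$ as the inverse spectral gap of $-S^{0,env}_{k,l}$ on $\Omega_{k,l,j}$ gives ${\rm Var}_{\kappa^{0,env}_{k,l,j}}(f)\le b^{env}_{l,j}\,E_{\kappa^{0,env}_{k,l,j}}[f(-S^{0,env}_{k,l}f)]$. (iii) Comparing the two Dirichlet forms term by term: each summand of $E_{\kappa^{0,env}_{k,l,j}}[f(-S^{0,env}_{k,l}f)]$ carries the same increment $(f(\eta^{x,x\pm1})-f(\eta))^2$, the same $g(\eta(x))$ (and, for the bond incident to $k$, the same $\tfrac{\eta(k)-1}{\eta(k)}$ factor) as the corresponding summand of $E_{\kappa^{env}_{k,l,j}}[f(-S^{env}_{k,l}f)]$, the only differences being the reference measure (a factor $\le A$, since $\kappa^{0,env}_{k,l,j}\le A\kappa^{env}_{k,l,j}$) and the replacement of $\mf p^{0,N}_{x,+}=1$ by $\mf p^{N}_{x,+}\ge r_{k,l,N}^{-1}$; hence $E_{\kappa^{0,env}_{k,l,j}}[f(-S^{0,env}_{k,l}f)]\le A\,r_{k,l,N}\,E_{\kappa^{env}_{k,l,j}}[f(-S^{env}_{k,l}f)]$. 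Multiplying (i)--(iii) gives ${\rm Var}_{\kappa^{env}_{k,l,j}}(f)\le A^2\,b^{env}_{l,j}\,r_{k,l,N}\,E_{\kappa^{env}_{k,l,j}}[f(-S^{env}_{k,l}f)]$, and inserting $b^{env}_{l,j}\le(g^*g_*^{-1})^2 b_{l,j}$ and $A^2=(\phi_{{\rm max},k,l}/\phi_{{\rm min},k,l})^{4j}$ produces the constant $C_{k,l,j}$ of the statement, which by construction bounds the inverse spectral gap of $-S^{env}_{k,l}$ on $\Omega_{k,l,j}$.

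\textit{Item (3): uniformity, and the main obstacle.} The window $\L_{k,l}$ has $2l+1$ sites, so telescoping the bound $\max_x|\phi_{x,N}-\phi_{x+1,N}|\le (C_2/N)\phi_{{\rm max},N}$ of Lemma \ref{lem: uniform bounds on phi max min} and using $\phi_{{\rm min},N}\ge\phi_{{\rm max},N}/C_1>0$, one obtains $\phi_{{\rm max},k,l}/\phi_{{\rm min},k,l}\le 1+C(l)/N$ for all $k$ and $N$, with $C(l)$ independent of $k$; hence $(\phi_{{\rm max},k,l}/\phi_{{\rm min},k,l})^{4j}\le\big(1+C(l)/N\big)^{4C\log N}=\exp\!\big(4C\log N\,\log(1+C(l)/N)\big)\to1$ as $N\to\infty$, uniformly in $k$ and in $1\le j\le C\log N$. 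Likewise, since $\mf p^N_{x,+}=(\tfrac12+\tfrac{\alpha^N_x}{N})+\tfrac{\phi_{x+1,N}}{\phi_{x,N}}(\tfrac12-\tfrac{\alpha^N_{x+1}}{N})$ with $\max_x|\alpha^N_x|$ bounded and $\phi_{x+1,N}/\phi_{x,N}=1+O(1/N)$ uniformly, we get $\mf p^N_{x,+}=1+O(1/N)$ uniformly in $x$, so $r_{k,l,N}=\max_{x\in\L_{k,l}}(\mf p^N_{x,+})^{-1}\to1$ uniformly in $k$. Combining these with the $j$-uniform bound $b_{l,j}=O(l^2)$ from \eqref{spec_gap_condition}, we obtain $\tfrac1N C_{k,l,j}=\tfrac1N(g^*g_*^{-1})^2\,O(l^2)\,(1+o(1))=O(l^2/N)\to0$, uniformly over $1\le k\le N$ and $1\le j\le C\log N$. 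The proof involves no serious difficulty once Items (1), Lemma \ref{lem: uniform bounds on phi max min} and \eqref{spec_gap_condition} are available; the only points requiring care are making the constants from Item (2) uniform \emph{simultaneously} in $k\in\T_N$ and in $j$ up to order $\log N$ (which is precisely the role of the uniform fugacity estimates and the $j$-uniform spectral-gap bound), and, in Item (2), verifying that the $\tfrac{\eta(k)-1}{\eta(k)}$ weights and the omission of the bond at $k$ appear identically in the inhomogeneous and homogeneous Dirichlet forms, so that only the reference measure and the factor $\mf p^N_{x,+}$ need to be compared.
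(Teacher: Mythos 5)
Your proposal is correct and follows essentially the same route as the paper, which itself is quite terse here (Items (1) and (2) are handled by citing the corresponding parts of Lemma 6.1 of \cite{LPSX} and noting that the extra $\eta(k)$ and $\rho_{k,N}$ factors cancel). Your Item (1) is actually slightly sharper: the canonical-measure Radon--Nikodym computation cleanly gives the exponent $j$, which sits inside the paper's stated $2j$ interval, and your observation that the $\eta(k)/\rho_{k,N}$ size-bias factor cancels identically between the inhomogeneous and null-environment measures is exactly the reduction the paper uses. The three-step chain in Item (2) and the term-by-term Dirichlet-form comparison (same $g(\eta(x))$ weights, same $\tfrac{\eta(k)-1}{\eta(k)}$ factor, only the measure and the $\mf p^N_{x,+}$ rate differ) is the standard argument the paper implicitly invokes, and Item (3) is the same telescoping of Lemma \ref{lem: uniform bounds on phi max min}.

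One small point worth flagging: you end Item (2) by saying the chain ``produces the constant $C_{k,l,j}$ of the statement,'' but your chain in fact yields $C_{k,l,j} = (g^*g_*^{-1})^2\,b_{l,j}\,r_{k,l,N}\,(\phi_{{\rm max},k,l}/\phi_{{\rm min},k,l})^{4j}$, with $b_{l,j}$, not $b_{l,j}^{-1}$ as in the lemma. This discrepancy traces back to a notational inconsistency in the paper itself: the text preceding \eqref{def of b lj} calls $b_{l,j}$ ``the inverse of the spectral gap,'' while the displayed formula \eqref{def of b lj} defines it as the gap (an $\inf$, not a $\sup$); the condition \eqref{spec_gap_condition} ``$b_{l,j}=O(l^2)$'' is only meaningful under the ``inverse-gap'' (Poincar\'e constant) reading, which is the one you adopt. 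Under that reading your constant is the right one for a Poincar\'e inequality, and the lemma's $b_{l,j}^{-1}$ should be read as $b_{l,j}$; under the ``gap'' reading the lemma's constant is right but then \eqref{spec_gap_condition} and the sentence preceding \eqref{def of b lj} need correcting. Either way the conclusion $\tfrac1N C_{k,l,j}\to0$ in Item (3) holds as you argue, so the issue is one of bookkeeping in the source rather than a gap in your proof.
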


\begin{proof}
For part (1), write
\begin{align*}
\kappa^{env}_{k,l,j}(\eta) =  \frac{\kappa^{env}_{k,l}(\eta)}{\kappa^{env}_{k,l}(\Omega_{k,l,j})} \ \ {\rm and \ \ }
\kappa^{0,env}_{k,l,j}(\eta) = \frac{\kappa^{0,env}_{k,l}(\eta)}{\kappa^{0,env}_{k,l}(\Omega_{k,l,j})}.
\end{align*}
We may write
\begin{align*}
\kappa^{env}_{k,l}(\eta) = \frac{\eta(k)}{\rho_{k,N}}\kappa_{k,l}(\eta) \ \ {\rm and \ \ }
\kappa^{0,env}_{k,l}(\eta)  = \frac{\eta(k)}{\rho^*}\kappa^0_{k,l}(\eta).
\end{align*}
Hence, $\kappa^{env}_{k,l}(\eta)/\kappa^{0,env}_{k,l}(\eta)$ equals $\frac{\rho^*}{\rho_{k,N}}\frac{\kappa_{k,l}(\eta)}{\kappa^0_{k,l}(\eta)}$.

Via the proof of Part (1) of Lemma 6.1 in \cite{LPSX}, corresponding to the part (1) above, we have
\begin{align}
\left(\dfrac{\phi_{{\rm min},k,l}}{\phi_{{\rm max},k,l}} \right)^{2j}
 \leq \frac{\kappa_{k,l}(\eta)}{\kappa^0_{k,l}(\eta)} \leq \left(\dfrac{\phi_{{\rm max},k,l}}{\phi_{{\rm min},k,l}} \right)^{2j}
 \label{1-block-kappa}
 \end{align}
and therefore
the ratio
$\kappa^{env}_{k,l}(\eta)/\kappa^{0,env}_{k,l}(\eta)$, as $\eta(k)$ cancels,
has the same lower and upper bounds, except they are multiplied by $\rho^*/\rho_{k,N}$.
Moreover, since
$\kappa^{env}_{k,l}(\Omega_{k,l,j}) = \sum_{\eta\in \Omega_{k,l,j}}\kappa^{env}_{k,l}(\eta)$,
by writing $\kappa^{env}_{k,l}(\eta) = (\kappa^{env}_{k,l}(\eta)/\kappa^0_{k,l}(\eta)) \kappa^0_{k,l}(\eta)$, we may estimate the ratio
$\kappa^0_{k,l}(\Omega_{k,l,j})/\kappa^{env}_{k,l}(\Omega_{k,l,j})$ similarly.
Combining the above estimates straightforwardly, yields part (1).

Part (2) follows similarly, using the corresponding Part (2) of Lemma 6.1 in \cite{LPSX}.

Part (3) follows from Lemma \ref{lem: uniform bounds on phi max min}, which gives that $\frac{\phi_{max,k,l}}{\phi_{min, k,l}} = 1 + O(l/N)$, and also that $\mf p_{x,+}^N = 1 + O(1/N)$.
\end{proof}

\subsubsection{Mean zero replacement.}
In the upcoming proof of the local 1-block, we will need to replace $h(\eta(x)) - H(\eta^{l}(x))$ by a mean zero function with respect to the measure $\kappa^{env}_{k,l,j}$.

Recall that $h$ is bounded and Lipschitz.

\begin{lemma}
We have, $C=C(l)>0$ depending on $l$, that
    \begin{equation*}
       \limsup_{l \rightarrow \infty} 
\limsup_{N \rightarrow \infty} \sup_{k\in\T_N} \sup_{j \leq C \log N} \left|\, E_{\kappa^{env}_{k,l,j}}[ h(\eta(k)) ] - H(j/(2l+1)) ] \,\right| = 0.
    \end{equation*}
    \label{lem:L1b_mean_zero_func}
\end{lemma}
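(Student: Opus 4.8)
The plan is to bound the quantity by a sum of two contributions: one comparing the inhomogeneous canonical measure $\kappa^{env}_{k,l,j}$ with its translation-invariant counterpart $\kappa^{0,env}_{k,l,j}$, the other an equivalence-of-ensembles estimate for the latter, uniform in the particle number $j$. I would first observe that on $\Omega_{k,l,j}$ one has $\eta^{l}(k)=j/(2l+1)$, so that $H(j/(2l+1))=E_{\kappa^{env}_{k,l,j}}[H(\eta^{l}(k))]$ is a constant there; the claim is then exactly that $E_{\kappa^{env}_{k,l,j}}[h(\eta(k))]$ is uniformly close to $E_{\nu^{0}_{j/(2l+1)}}[h(\xi(0))]$.

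For the first contribution I would apply Part (1) of Lemma \ref{spectral gap ln k}: since the Radon--Nikodym derivative $d\kappa^{env}_{k,l,j}/d\kappa^{0,env}_{k,l,j}$ lies in $\big[(\phi_{\min,k,l}/\phi_{\max,k,l})^{2j},\,(\phi_{\max,k,l}/\phi_{\min,k,l})^{2j}\big]$ and $h$ is bounded,
\[
\big|E_{\kappa^{env}_{k,l,j}}[h(\eta(k))]-E_{\kappa^{0,env}_{k,l,j}}[h(\eta(k))]\big|\ \le\ \|h\|_{\infty}\Big(\big(\phi_{\max,k,l}/\phi_{\min,k,l}\big)^{2j}-1\Big),
\]
and the right-hand side tends to $0$ as $N\to\infty$, uniformly over $k\in\T_N$ and $j\le C\log N$, by Part (3) of Lemma \ref{spectral gap ln k}. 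Since $\kappa^{0,env}_{k,l,j}$ is a spatial translate of $\kappa^{0,env}_{0,l,j}$, one has $E_{\kappa^{0,env}_{k,l,j}}[h(\eta(k))]=E_{\kappa^{0,env}_{0,l,j}}[h(\eta(0))]$, independent of $k$ and $N$, so it remains to prove
\[
\lim_{l\to\infty}\ \sup_{j\ge 1}\ \big|E_{\kappa^{0,env}_{0,l,j}}[h(\eta(0))]-H(j/(2l+1))\big|\ =\ 0 .
\]

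For this translation-invariant part I would use that $\kappa^{0,env}_{0,l,j}$ is the restriction of $\nu^{0}_{\rho}$ to the block $\L_{0,l}$ conditioned to carry exactly $j$ particles --- a measure that does not depend on the parameter $\rho>0$ --- whereas $H(j/(2l+1))=E_{\nu^{0}_{j/(2l+1)}}[h(\xi(0))]$ is the site-$0$ expectation of $h$ under the unconditioned $\nu^{0}_{j/(2l+1)}$. The classical equivalence of ensembles --- a local central limit theorem for the sum of the $2l$ i.i.d.\ $\Pb_{\Phi(\rho)}$-marginals over the non-tagged sites together with a ratio estimate --- then gives $\big|E_{\kappa^{0,env}_{0,l,j}}[h(\eta(0))]-H(\rho)\big|\le C\|h\|_{\infty}/l$ uniformly over densities $\rho=j/(2l+1)\ge\delta$, the uniformity over large $\rho$ being furnished by (LG), (M), which keep the marginals $\Pb_{\Phi(\rho)}$ in the (FEM) class with $\sigma^{2}(\rho)$ comparable to $\rho$ (cf.\ the discussion after \eqref{Phi_eqn}). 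On the complementary range $\rho=j/(2l+1)\le\delta$ I would argue directly: deleting the tagged particle at site $0$ turns $\kappa^{0,env}_{0,l,j}$ into $\kappa^{0}_{0,l,j-1}$ reweighted at site $0$ by the bounded factor $(n+1)/g(n+1)\in[1/g^{*},1/g_{*}]$, so $\kappa^{0,env}_{0,l,j}(\eta(0)\ge 2)\le (g^{*}/g_{*})\,E_{\kappa^{0}_{0,l,j-1}}[\eta(0)]=(g^{*}/g_{*})(j-1)/(2l+1)\le C\rho$, and hence $E_{\kappa^{0,env}_{0,l,j}}[h(\eta(0))]$ lies within $2C\|h\|_{\infty}\rho$ of $h(1)$, with $h(1)=\lim_{\rho\downarrow 0}H(\rho)$ since the site-$0$ marginal of $\nu^{0}_{\rho}$ converges to $\delta_{1}$ on $\N_{0}$ as $\rho\downarrow 0$. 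Combining with $|H(\rho)-h(1)|\le L\rho$ ($H$ Lipschitz, cf.\ Lemma 6.7 in \cite{JLS}) bounds the difference by $C'\delta$ on this range; letting $N\to\infty$, then $l\to\infty$, then $\delta\downarrow 0$ finishes the argument.

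The step I expect to be the main obstacle is making the equivalence-of-ensembles estimate uniform over the entire range $j\ge 1$: the small-density regime, where the size bias at the tagged site degenerates toward $\delta_{1}$ and the naive prefactor $1/\rho$ would blow up, must be handled by the explicit ``lone tagged particle'' estimate above, while the large-density regime is controlled by the growth bounds (LG), (M). By contrast the inhomogeneity of the random environment enters only through the comparison step, where it is already quantified by Lemma \ref{spectral gap ln k}, which in turn rests on the fugacity bounds of \cite{LPSX} and on attractiveness.
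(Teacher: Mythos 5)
Your first step --- bounding the difference between $E_{\kappa^{env}_{k,l,j}}[h(\eta(k))]$ and $E_{\kappa^{0,env}_{k,l,j}}[h(\eta(k))]$ through the Radon--Nikodym derivative estimate of Lemma \ref{spectral gap ln k} Part (1) and then sending $N\to\infty$ uniformly over $k$ and $j\le C\log N$ via Part (3) --- is exactly the paper's argument (the paper writes it as a two-sided multiplicative bound; your absolute-difference form via $\|h\|_\infty(B-1)$ is a cosmetic tidying that avoids implicitly assuming $h\ge 0$). The only genuine divergence is in the remaining translation-invariant step: the paper simply invokes Lemma 6.4 of \cite{JLS}, which gives $\sup_{j\ge 0}\big|E_{\kappa^{0,env}_{0,l,j}}[h(\eta(0))]-H(j/(2l+1))\big|\to 0$ as $l\to\infty$ for bounded Lipschitz $h$, whereas you re-derive this equivalence-of-ensembles estimate from scratch --- local CLT plus ratio estimates for $\rho\ge\delta$ with uniformity in large $\rho$ coming from (LG), (M), and a size-biased ``lone tagged particle'' computation for $\rho\le\delta$. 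Your sketch of the low-density case is correct (the change of variables $\eta(0)\mapsto\zeta(0)+1$ does indeed turn $\kappa^{0,env}_{0,l,j}$ into $\kappa^{0}_{0,l,j-1}$ reweighted by $(\zeta(0)+1)/g(\zeta(0)+1)\in[1/g^*,1/g_*]$, giving $\kappa^{0,env}_{0,l,j}(\eta(0)\ge 2)\le C\rho$ and hence proximity to $h(1)=\lim_{\rho\downarrow 0}H(\rho)$), and you correctly flag that uniformity over all $j$ --- both ends of the density range --- is the nontrivial content of the cited lemma. What the paper's citation buys is brevity and a clean separation of concerns (the translation-invariant estimate is the hard, previously-established ingredient; the new content of the lemma is the inhomogeneity comparison); what your expansion buys is self-containedness and an explicit explanation of why the degenerate size-bias prefactor $1/\rho$ does not cause trouble near $\rho=0$. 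Both are sound.
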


\begin{proof}
We now make use of the estimates in Lemma \ref{spectral gap ln k}
   to deduce
    \begin{equation*}
        \left( \frac{\phi_{min,k,l}}{\phi_{max, k,l}} \right)^{2j} E_{\kappa^{0,env}_{k,l,j}}[ h(\eta(k)) ]
        \leq
        E_{\kappa^{env}_{k,l,j}}[ h(\eta(k)) ]
        \leq
        \left( \frac{\phi_{max,k,l}}{\phi_{min,k,l}} \right)^{2j} E_{\kappa^{0,env}_{k,l,j}}[ h(\eta(k)) ].
    \end{equation*}
Note that the ratio $(\phi_{max,k,l}/\phi_{min,k,l})^{2j}$ converges to $1$ uniformly over $k\in \T_N$ and $j\leq C\log(N)$ as $N\uparrow\infty$. Since $h$ is bounded, the desired convergence follows from the convergence of 
    $$\sup_{j\geq 0}|E_{\kappa^{0,env}_{k,l,j}}[ h(\eta(k)) ] -H(j/(2l+1))|,$$ which does not depend on $k\in \T_N$, as $l\rightarrow\infty$ to $0$, as given by Lemma 6.4 in \cite{JLS} for bounded, Lipschitz $h$.
\end{proof}

%%%%%%%%%%%%%%%%%%%%%%%%%%%%%%%%%%%%%%%%%%%

\subsubsection{Local 1-block replacement}
We now give the main estimate of the section.

\begin{lemma}[Local 1-block]
    With the same notation as in Lemma \ref{replacement-lemma},
    \begin{equation*}
        \limsup_{\ell \rightarrow \infty} \limsup_{N \rightarrow \infty} ~\E^N\left[ ~\left| \int_0^t D^N_{X^N_s}\Big(h(\eta_s(X^N_s)) - H(\eta_s^{\ell}(X^N_s))\Big) \,ds \right|~ \right] = 0.
    \end{equation*}
    \label{lem:L1b}
\end{lemma}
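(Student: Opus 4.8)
The plan is to run the standard one-block scheme — entropy inequality, Feynman--Kac, reduction to a static variational problem, then a Poincaré (spectral-gap) estimate — but carried out with the inhomogeneous joint measure $\nu^N$ and the environment canonical measures $\kappa^{env}_{x,\ell,j}$, using Lemma \ref{spectral gap ln k} to absorb the inhomogeneity. Write $V(x,\eta) = D^N_x\big(h(\eta(x)) - H(\eta^\ell(x))\big)$ and $Y_N = \int_0^t V(X^N_s,\eta_s)\,ds$; since $h$ and $H$ are bounded, $V$ is bounded. First I would reduce the claim to an exponential estimate: by $|Y_N|\le\gamma^{-1}\log(e^{\gamma Y_N}+e^{-\gamma Y_N})$ and Jensen, $\E^N[|Y_N|]\le \gamma_N^{-1}\log 2 + \gamma_N^{-1}\max\big(\log\E^N[e^{\gamma_N Y_N}],\log\E^N[e^{-\gamma_N Y_N}]\big)$ for a sequence $\gamma_N\to\infty$ to be chosen, so it suffices to bound the two exponential moments. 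Before that, I would truncate: replace $V$ by $V^{tr}(x,\eta)=V(x,\eta)\,\OneB\{\sum_{|y-x|\le\ell}\eta(y)\le C'\log N\}$ for $C'=C'(\ell)$; since crowding of the $\ell$-box forces $\max_y\eta_s(X^N_s+y)$ to be of order $\log N$, Lemma \ref{loc trun lemma} together with bounded convergence makes the error from this truncation vanish as $N\to\infty$ for fixed $\ell$.

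Next, by the entropy inequality, Condition \ref{cond: initial measure}(d) (the bound $\H(\mu^N|\nu^N)\le C_0 N$), and the Feynman--Kac formula (cf. Appendix 1 of \cite{KL}, and \cite{JLS}), the exponential moments are bounded by $\exp\big(C_0 N + t\,\Lambda_N(\gamma_N)\big)$, where $\Lambda_N(\gamma) = \sup_g\big\{\gamma\big|\int V^{tr} g\,d\nu^N\big| - N^2\DD(\sqrt g)\big\}$, the supremum taken over densities $g$ with respect to $\nu^N$ and $\DD$ the Dirichlet form of the symmetric part $\mathcal{S}_N$ in \eqref{two star reversible}. The core step is to estimate $\Lambda_N$. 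Conditioning $\nu^N$ on the tagged-particle location $\{X=x\}$ gives $\nu^{env,x}$, and conditioning further on $\{\sum_{|y-x|\le\ell}\eta(y)=j\}$ gives the canonical environment measures $\kappa^{env}_{x,\ell,j}$. Since $V^{tr}$ is supported on the box $\Lambda_{x,\ell}$, one may discard from $\DD(\sqrt g)$ all terms in which the tagged particle moves and all bonds outside $\Lambda_{x,\ell}$; convexity of the Dirichlet form under these successive conditional expectations then produces weights $w_{x,j}\ge 0$ with $\sum_{x,j}w_{x,j}=1$ and conditional densities $\hat g_{x,j}$ with respect to $\kappa^{env}_{x,\ell,j}$ such that $\Lambda_N(\gamma)$ is dominated by $\sum_x\sum_{1\le j\le C'\log N} w_{x,j}\big\{\gamma\,|E_{\kappa^{env}_{x,\ell,j}}[V\,\hat g_{x,j}]| - N^2\mathcal{D}^{env}_{x,\ell,j}(\sqrt{\hat g_{x,j}})\big\}$, the fibers with $j>C'\log N$ dropping out since $V^{tr}\equiv 0$ there.

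On the fiber $\{\sum_{|y-x|\le\ell}\eta(y)=j\}$ one has $\eta^\ell(x)=j/(2\ell+1)$, so $V = D^N_x\big(h(\eta(x)) - H(j/(2\ell+1))\big)$; I would split $h(\eta(x))-H(j/(2\ell+1))$ into the mean-zero part $h(\eta(x)) - E_{\kappa^{env}_{x,\ell,j}}[h(\eta(x))]$, bounded by $2\|h\|_\infty$, plus the deterministic remainder, which by Lemma \ref{lem:L1b_mean_zero_func} is at most $\delta_{\ell,N}:=\sup_x\sup_{1\le j\le C'\log N}|E_{\kappa^{env}_{x,\ell,j}}[h(\eta(x))] - H(j/(2\ell+1))|$ with $\limsup_{\ell\to\infty}\limsup_{N\to\infty}\delta_{\ell,N}=0$. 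For the mean-zero part $\bar V$, the elementary bound $E_\kappa[\bar V\phi^2]\le 2\|\bar V\|_\infty\sqrt{{\rm Var}_\kappa(\phi)}$ (using that $\phi=\sqrt{\hat g_{x,j}}$ has unit $L^2(\kappa^{env}_{x,\ell,j})$-norm) and the Poincaré inequality ${\rm Var}_{\kappa^{env}_{x,\ell,j}}(\phi)\le C_{x,\ell,j}\,\mathcal{D}^{env}_{x,\ell,j}(\phi)$ of Lemma \ref{spectral gap ln k}(2) give $|E_{\kappa^{env}_{x,\ell,j}}[V\hat g_{x,j}]|\le 4\|D\|_\infty\|h\|_\infty\sqrt{C_{x,\ell,j}\,\mathcal{D}^{env}_{x,\ell,j}(\sqrt{\hat g_{x,j}})} + \|D\|_\infty\delta_{\ell,N}$. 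Optimizing $a\sqrt d - N^2 d$ over $d\ge 0$ ($\le a^2/4N^2$) yields $\Lambda_N(\gamma)\le 4\gamma^2\|D\|_\infty^2\|h\|_\infty^2 N^{-2}\,\Gamma_{\ell,N} + \gamma\|D\|_\infty\delta_{\ell,N}$, where $\Gamma_{\ell,N}:=\sup_x\sup_{1\le j\le C'\log N}C_{x,\ell,j}$. Hence $\E^N[|Y_N|]\le \gamma_N^{-1}(\log 2 + C_0 N) + 4t\,\gamma_N\Gamma_{\ell,N}N^{-2}\|D\|_\infty^2\|h\|_\infty^2 + t\|D\|_\infty\delta_{\ell,N}$; by Lemma \ref{spectral gap ln k}(3), $\Gamma_{\ell,N}=o(N)$ for fixed $\ell$, so choosing $\gamma_N=(N^3/\Gamma_{\ell,N})^{1/2}$ (whence $\gamma_N\to\infty$, $\gamma_N^{-1}N=(\Gamma_{\ell,N}/N)^{1/2}\to 0$, and $\gamma_N\Gamma_{\ell,N}N^{-2}=(\Gamma_{\ell,N}/N)^{1/2}\to 0$) kills the first two groups of terms as $N\to\infty$, leaving $\limsup_\ell\limsup_N\E^N[|Y_N|]\le t\|D\|_\infty\limsup_\ell\limsup_N\delta_{\ell,N}=0$.

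I expect the main obstacle to be the Dirichlet-form reduction in the core step: organizing the successive conditionings of the inhomogeneous measure $\nu^N$ (in which the fugacities $\phi_{k,N}$ enter the weights $\mf p_{k,\pm}^N$) so that the retained piece of $\DD(\sqrt g)$ genuinely dominates the weighted sum of the environment Dirichlet forms $\mathcal{D}^{env}_{x,\ell,j}$, and verifying that every $o(1)$ is uniform in the tagged-particle location $x\in\T_N$ — which is precisely why Lemma \ref{spectral gap ln k}(3) is stated with a supremum over $k$, and where the random environment makes the argument heavier than its translation-invariant counterpart in \cite{JLS}. Once that bookkeeping is in place, the entropy/Feynman--Kac bound, the particle truncation via Lemma \ref{loc trun lemma}, the mean-zero replacement via Lemma \ref{lem:L1b_mean_zero_func}, and the $\gamma_N$-optimization are routine.
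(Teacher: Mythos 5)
Your proof is correct and follows the same overall scheme as the paper's: entropy inequality with respect to $\nu^N$, the $\log N$ particle truncation via Lemma \ref{loc trun lemma}, Feynman--Kac, successive conditioning on the tagged-particle location $x$ and the particle count $j$ in $\Lambda_{x,\ell}$ (discarding the tagged-particle and exterior bonds so that the surviving Dirichlet form dominates $\DD^{env}_{x,\ell,j}$), the mean-zero replacement via Lemma \ref{lem:L1b_mean_zero_func}, and the spectral-gap asymptotics of Lemma \ref{spectral gap ln k}(2)--(3). The one place where you diverge is the final estimate of the canonical variational problem. The paper fixes $\gamma$, applies the Rayleigh perturbation expansion (Theorem 1.1, Appendix 3 of \cite{KL}), which yields a bound of order $(\gamma/N)C_{x,\ell,j}$, lets $N\to\infty$ then $\ell\to\infty$, and only sends $\gamma\to\infty$ at the very end to kill the entropy term $C_0/\gamma$. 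You instead use the elementary inequality $|E_\kappa[\bar V\phi^2]|\le 2\|\bar V\|_\infty\sqrt{\mathrm{Var}_\kappa(\phi)}$ for $\|\phi\|_{L^2(\kappa)}=1$ followed by Poincar\'e and AM--GM, which is slightly lossier (quadratic rather than linear in $\gamma$), and compensate by letting $\gamma_N\to\infty$ concurrently with $N$; your choice $\gamma_N=(N^3/\Gamma_{\ell,N})^{1/2}$ is tuned exactly so that both $\gamma_N^{-1}C_0 N=(\Gamma_{\ell,N}/N)^{1/2}$ and $\gamma_N\Gamma_{\ell,N}N^{-2}=(\Gamma_{\ell,N}/N)^{1/2}$ vanish, which Lemma \ref{spectral gap ln k}(3) guarantees. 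Both endings are classical and interchangeable in the one-block scheme; yours avoids the Rayleigh expansion at no real cost. The rest of your bookkeeping (the reduction $\DD(\sqrt g)\ge\sum_x\nu^N(X=x)\DD^{env,x}$ by dropping tagged moves, convexity under conditioning to pass to $\kappa^{env}_{x,\ell,j}$, and the pigeonhole link between crowding of the $\ell$-box and $\max_y\eta_t(X^N_t+y)\ge c\log N$ needed to invoke Lemma \ref{loc trun lemma}) is exactly what the paper does.
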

\begin{proof}
    Denote by $V_\ell(X^N_s, \eta_s)$ the integrand appearing in the statement of this lemma. Since $h$ and $D^N_\cdot$ are uniformlly bounded, $V_\ell$ is also bounded.  By the truncation in Lemma \ref{loc trun lemma}, it suffices to show
    \begin{equation*}
        \limsup_{\ell \rightarrow \infty} \limsup_{N \rightarrow \infty} ~\E^N\left[ \int_0^t V_\ell(X^N_s, \eta_s) \OneB_{G_{N,\ell}}(X^N_s,\eta_s) \,ds \right] = 0, 
    \end{equation*}
where $G_{N,\ell} = \{(x, \eta) : \eta^\ell(x) \leq C \log N \}$.

    Applying the entropy inequality to the above expectation yields
    \begin{align*}
&            \E^N\left[ \int_0^t V_\ell(X^N_s, \eta_s) \OneB_{G_{N,\ell}}(X^N_s,\eta_s) \,ds \right]\\
            &\leq \frac{\H(\mu^N \,|\, \nu^N)}{N \gamma} +
            \frac{1}{N \gamma} \log E_{\nu^N}\left[ \exp\left(\gamma N \left| \int_0^t V_\ell(X^N_s, \eta_s) \OneB_{G_{N,\ell}}(X^N_s,\eta_s) \,ds \right| \right) \right].
    \end{align*}
By \eqref{prop:rel_ent_order_N}, the first term vanishes as $\gamma \rightarrow \infty$. It remains to estimate the second term. We may remove the absolute value in the second term by applying the inequality $e^{|x|} \leq e^x + e^{-x}$ and $\limsup \frac{1}{N}\log(a_N + b_N) \leq \max \{\limsup \frac{1}{N}\log a_N, \limsup \frac{1}{N}\log b_N\}$. 
    We need only estimate
    \begin{equation}
        \frac{1}{\gamma N} \log E_{\nu^N}\left[ \exp \left(\gamma N \int_0^t V_\ell(X^N_s, \eta_s) \, \OneB_{G_{N,\ell}}(X^N_s,\eta_s)\,ds \right) \right].
        \label{eq:lem:L1b_exp_1}
    \end{equation}

    As the time-scaled process $(X_s^N, \eta_s^N)$ has generator $N^2 \LL_N$ where $\LL_N$ is defined in \eqref{eq:gen},
we may apply Lemma 7.2 in Appendix 1 \cite{KL} to bound \eqref{eq:lem:L1b_exp_1} by
\begin{align}
\label{first eigenvalue}
        \frac{t}{\gamma N} \sup_{\|f\|_{L^2(\nu^N)} = 1} \langle (N^2 \LL + \gamma N \,V_\ell \,\OneB_{G_{N,\ell}}) \,f, f \rangle_{\nu^N}
        = t \sup_{\|f\|_{L^2(\nu^N)} = 1} \langle V_\ell\, \OneB_{G_{N,\ell}} \,f, f \rangle_{\nu^N} - \frac{N}{\gamma} \DD(f).
    \end{align}
 By first conditioning on the values of $X$, and then dropping the nonnegative `tagged particle terms' in line \eqref{two star reversible}, we may bound $\DD(f) \geq \sum_x \nu^N(X=x)\DD^{env,x}(f(x, \cdot))$.  Note that $\nu^N(X=x) = \rho_{x,N}/\|\rho_{\cdot,N}\|_{L^1(\T_N)}$.  We may divide and multiply each function $f(x,\cdot)$ by $\sqrt{E_{\nu^{env,x}}[f^2]}$, noting that $f^x= f(x,\cdot)/\sqrt{E_{\nu^{env,x}}[f^2]}$ has $L^2(\nu^{env,x})$ norm $1$.  
 
 Then, \eqref{first eigenvalue} is less than 
 \begin{align*}
 & \sup_{\|f\|_{L^2(\nu^N)} = 1} \sum_x \nu^N(X=x) E_{\nu^{env,x}}[f^2] \Big\{\langle V_\ell\, \OneB_{G_{N,\ell}} \,f^x, f^x \rangle_{\nu^{x,env}} - \frac{N}{\gamma} \DD^{env,x}(f^x)\Big\}\\
 &\leq \sup_x \sup_{\|f\|_{L^2(\nu^{x,env})}=1}   \Big\{\langle V_\ell\, \OneB_{G_{N,\ell}} \,f, f \rangle_{\nu^{x,env}} - \frac{N}{\gamma} \DD^{x, env}(f)\Big\}.
 \end{align*}  
   Since $V_\ell$ depends only on $\Omega_{x,\ell}$ in $\langle V_\ell\, \OneB_{G_{N,\ell}} \,f, f \rangle_{\nu^{env,x}}$, we may replace $f$ by its conditional expectation given $\Omega_{x,\ell}$, denoted $\hat f$, and $\nu^{env,x}$ by $\kappa^{env}_{x,\ell}$.  
 By convexity and dropping terms for jumps outside of $\Lambda_{x,\ell}$, the Dirichlet form $\DD^{env,x}(f) \geq \DD^{env}_{x,\ell}(\hat f)$.  Hence, we bound the last display by
 \begin{align}
 \label{second eigenvalue}
 &\sup_x \sup_{\|f\|_{L^2(\kappa^{env}_{x,\ell})}=1}   \Big\{\langle V_\ell\, \OneB_{G_{N,\ell}} \,f, f \rangle_{\kappa^{env}_{x,\ell}} - \frac{N}{\gamma} \DD^{env}_{x,\ell}(f)\Big\}.
 \end{align}
 
 One may now condition on the number $j$ of particles in $\Lambda_{x,\ell}$.  Because of the truncation, we may limit to $j\leq C\ell\log(N)$.  Again, for each $j$, we may multiply and divide by $\sqrt{E_{\kappa^{env}_{x,\ell,j}}[f^2]}$.  Denote by $\tilde f$ the function $f/\sqrt{E_{\kappa^{env}_{x,\ell,j}}[f^2]}$.  Since for $j\leq Cl\log(N)$, 
 we bound \eqref{second eigenvalue} by
 \begin{align}
 &\sup_x \sup_{\|f\|_{L^2(\kappa^{env}_{x,\ell})}=1}   \sum_{j\leq C\ell\log(N)} \kappa^{env}_{x,\ell}(\Omega_{x,\ell,j})E_{\kappa^{env}_{x,\ell,j}}[f^2] \Big\{\langle V_\ell \,\tilde f, \tilde f \rangle_{\kappa^{env}_{x,\ell,j}} - \frac{N}{\gamma} \DD^{env}_{x,\ell,j}(\tilde f)\Big\}\nonumber\\
&\leq\sup_x \sup_{j\leq C\ell\log(N)} \sup_{\|f\|_{L^2(\kappa^{env}_{x,\ell,j})}=1}   \Big\{\langle V_\ell \,f, f \rangle_{\kappa^{env}_{x,\ell,j}} - \frac{N}{\gamma} \DD^{env}_{x,\ell,j}(f)\Big\}.
\label{third eigenvalue}
\end{align}

By Lemma \ref{lem:L1b_mean_zero_func}, the difference $H(j/(2\ell+1)) - E_{\kappa^{env}_{x,\ell,j}}[h(\eta(0))]$ vanishes, taking note of the restrictions and order of the limits.  Therefore, we may replace $V_\ell$ by $V_{x,\ell,j} = D^N_x\big(h(\eta(x)) - E_{\kappa^{env}_{x,\ell,j}}[h(\eta(0))]\big)$, which is mean zero with respect to $\kappa^{env}_{x,\ell,j}$.  
Applying Rayleigh's expansion, Theorem 1.1 in Appendix 3 \cite{KL}, yields that \eqref{third eigenvalue} the  is bounded above by
    \begin{align*}
        \frac{(\gamma/N) \langle (-S^{env}_{x,\ell})^{-1} V_{x,\ell,j},~ V_{x,\ell,j} \rangle_{L^2(\kappa^{env}_{x,\ell,j})} }{1 - 2\| V_{x,\ell,j} \|_{\infty} (\gamma/N) C_{x,\ell,j}}
   &\leq  \frac{(\gamma/N) C_{x,\ell,j} \| V_{x,\ell,j} \|_{L^2(\kappa^{env}_{x,\ell,j})}^2 }{1 - 2 \|V_{x,\ell, j}\|_{\infty} (\gamma/N) C_{x,\ell,j} }
    \end{align*}
Since $(1/N)C_{x,\ell,j}$ vanishes as $N\uparrow\infty$ by Lemma \ref{spectral gap ln k}, and $H$ and $D^N_\cdot$ are uniformly bounded, we conclude the argument.
\end{proof}

%%%%%%%%%%%%%%%%%%%%%%%%%%%%%%%%%%%%%%%%%%%
\subsection{Local 2-block}
\label{sec:loc_2b}

 We now detail the $2$-block estimate
 following the outline of the $1$-block estimate, which will treat the second term in \eqref{eq:lem:repl:split} in the proof of Lemma \ref{replacement-lemma}.  We will be able to replace $H(\eta^\ell(x))$ by an average of $H(\eta^\ell(X^N +x))$ for $x$ in a small macroscopic $\epsilon N$ neighborhood of $X^N$.

Recall the notation $\L_{k,l}$ from
the $1$-block estimate.  
For $l\geq 1$ and $k, k'$ such that
$|k-k'|>2l$ and $k+ l \leq k'-l$, let
$\L_{k,k',l} = \L_{k,l} \cup \L_{k',l}$.  We introduce the following
localized generators $S_{k,k',l}$ and $S^{env}_{k,k',l}$ governing a process on
$\Omega_{k,k',l} = \N_0^{\L_{k,k',l}}$.  Inside each block, the
process moves as before, but we add an extra bond interaction between
sites $k+l$ and $k'-l$.  Define
\begin{align*}
S_{k,k',l}f(\eta)= S_{k,l}f(\eta) + & S_{k',l}f(\eta)
+
\dfrac12  \, g(\eta(k+l))
\mf p_{k+l,k'-l}^N 
\big(f(\eta^{k+l,k'-l}) - f(\eta) \big)\\
&+
\dfrac12 \, g(\eta(k'-l)) \, \mf p_{k'-l,k+l}^N
\big(f(\eta^{k'-l,k+l}) - f(\eta) \big)
\end{align*}
where
$\mf p_{k+l,k'-l}^N=
 \dfrac12 + \dfrac{\alpha_{k+l}^N}{ N}
+
\dfrac{\phi_{k'-l,N}}{\phi_{k+l,N}}
\big( \dfrac12 - \dfrac{\alpha_{k'-l}^N}{N}\big)$ and 
$\mf p_{k'-l,k+l}^N
=
\dfrac12 - \dfrac{\alpha_{k'-l}^N}{N}
+
\dfrac{\phi_{k+l,N}}{\phi_{k'-l,N}}
\big( \dfrac12 + \dfrac{\alpha_{k+l}^N}{N}\big)$.

Define also
\begin{align*}
S^{env}_{k,k',l}f(\eta)= S^{env}_{k,l}f(\eta) + & S_{k',l}f(\eta)
+
\dfrac12  \, g(\eta(k+l))
\mf p_{k+l,k'-l}^N 
\big(f(\eta^{k+l,k'-l}) - f(\eta) \big)\\
&+
\dfrac12 \, g(\eta(k'-l)) \, \mf p_{k'-l,k+l}^N
\big(f(\eta^{k'-l,k+l}) - f(\eta) \big).
\end{align*}

As before, consider the localized product measures
$\kappa_{k,k',l} =\kappa_{k,l}\times \kappa_{k',l}$ and 
$\kappa^{env}_{k,k',l} = \kappa^{env}_{k,l}\times \kappa_{k',l}$.
Define as
well as the canonical measures $\kappa_{k,k',l,j}$ and $\kappa^{env}_{k,k',l}$ on
$\Omega_{k,k',l,j}: = \{\eta\in \Omega_{k,k',l}: \sum_{x\in
  \L_{k,k',l}} \eta(x) = j\}$, that is $\kappa_{k,k',l}$ and $\kappa^{env}_{k,k',l}$
conditioned so that there are exactly $j$ particles counted in
$\Omega_{k,k',l}$. With the form of the rates $\mf p_{k+l, k'-l}^N$ and $\mf p_{k'-l, k+l}^N$, both sets of measures are invariant and reversible for the
dynamics with generators $S_{k,k',l}$ and $S^{env}_{k,k',l}$ respectively.

The corresponding Dirichlet forms $\mathcal{D}_{k,k',l}$, $\mathcal{D}_{k,k',l,j}$ and $\mathcal{D}^{env}_{k,k',l}(f)$, $\mathcal{D}^{env}_{k,k',l,j}$ are given as follows.  Define
\begin{align*}
\mathcal{D}_{k,k',l} &= \dfrac12  \, \sum_{x,x+1\in \L_{k,k',l}}
 E_{\kappa_{k,k',l}} \Big[
g(\eta(x)) \, \mf p_{x,+}^N
\big(f(\eta^{x,x+1}) - f(\eta) \big)^2
\Big]\nonumber\\
 &\ \ \  \ +
 \dfrac12  E_{\kappa_{k,k',l}} \Big[
g(\eta(k+l))
\mf p_{k+l,k'-l}^N
\big(f(\eta^{k+l,k'-l}) - f(\eta) \big)^2
 \Big].
\end{align*}
Similarly, the canonical form $\mathcal{D}_{k,k',l,j}$ is defined except we use the measure $\kappa_{k,k',l,j}$ instead of $\kappa_{k,k',l}$. 

 Moreover,
\begin{align*}
\mathcal{D}^{env}_{k,k',l}(f) &= \dfrac12  \, \sum_{\stackrel{x\in \L_{k,k',l}\setminus {k}}{x+1\in \Lambda_{k,k',l}} }
 E_{\kappa^{env}_{k,k',l} }\Big[
g(\eta(x)) \, \mf p_{x,+}^N
\big(f(\eta^{x,x+1}) - f(\eta) \big)^2
\Big]\\
&\ \ \ \ + \dfrac12 E_{\kappa^{env}_{k,k',l}}\Big[g(\eta(k))\frac{\eta(k)-1}{\eta(k)}\mf p^N_{k,+}\big(f(\eta^{k,k+1})-f(\eta)\big)^2\Big]\\
 &\ \ \  \ +
 \dfrac12  E_{\kappa_{k,k',l}} \Big[
g(\eta(k+l))
\mf p_{k+l,k'-l}^N
\big(f(\eta^{k+l,k'-l}) - f(\eta) \big)^2
 \Big],
\end{align*}
with the canonical form $\mathcal{D}^{env}_{k,k',l,j}$ defined analogously with $\kappa^{env}_{k,k',l,j}$ replacing $\kappa^{env}_{k,k',l,j}$.

Define $S^0_{k,k',l}$ and $S^{0,env}_{k,k',l}$ as the generators $S_{k,k',l}$ and $S^{env}_{k,k',l}$ when $\alpha^N_\cdot\equiv 0$.  
When $|k-k'|$ is large, the processes generated by $S^0_{k,k',l}$ and $S^{0,env}_{k,k',l}$ may be thought of as on adjacent blocks with a connecting bond.
 In this sense, these processes do not depend on $k,k'$, but only on the width $l$.

Let also $\kappa^0_{k,k',l}$, $\kappa^0_{k,k',l,j}$ and $\kappa^{0,env}_{k,k',l}$, $\kappa^{0,env}_{k,k',l,j}$ be the measures $\kappa_{k,k',l}$, $\kappa_{k,k',l,j}$ and $\kappa^{env}_{k,k',l}$, $\kappa^{env}_{k,k',l,j}$ when $\alpha^N_\cdot \equiv 0$.
Similar to \eqref{def of b lj}, for each $l$ and $j\geq 1$, we 
let $b_{l,l,j}, b^{env}_{l,l,j}>0$ be the inverse of the spectral gaps of $-S^0_{k,k',l}$ and $-S^{0,env}_{k,k',l}$ on $\Omega_{k,k',l,j}$:
\begin{align*}
b_{l,l,j}:=
\inf_{f} 
\dfrac
 {E_{\kappa^0_{k,k',l,j}}[ f(-S^0_{k,k',l}f)]}
 {{\rm Var}_{\kappa^0_{k,k',l,j}} (f)} \ \ {\rm and \ \ } b^{env}_{l,l,j}:=
\inf_{f} 
\dfrac
 {E_{\kappa^{0,env}_{k,k',l,j}}[ f(-S^{0,env}_{k,k',l}f)]}
 {{\rm Var}_{\kappa^{0,env}_{k,k',l,j}} (f)}.
\end{align*}
Again, we have
$b^{env}_{l,l,j} \leq (g^*g_*^{-1})^2b_{l,l,j}\leq C(g^*, g_*)l^2$,
by Lemma 6.2 in \cite{JLS} and \eqref{spec_gap_condition}, under assumptions (LG) and (M). 

Let $ r_{k,k',l,N}^{-1} :=
\min\big\{
\mf p_{k+l,k'-l}^N,
\min_{x,x+1\in \L_{k,k',l}}
\left\{
\mf p_{x,+}^N
\right\}
\big\} $.

\begin{lemma} \label {spectral gap of 2 blocks}
We have the following estimates:
\begin{enumerate}
\item Uniform bound:  For all $\eta\in \Omega_{k,k', l, j}$, we have
\begin{equation*}
\left(\dfrac{\phi_{{\rm min},k,l}}{\phi_{{\rm max},k,l}} \right)^{2j}
\left(\dfrac{\phi_{{\rm min},k',l}}{\phi_{{\rm max},k',l}} \right)^{2j}
\leq
\dfrac{\kappa^{env}_{k,k',l,j} (\eta) }{\kappa^{0,env}_{k,k',l,j}(\eta)}
\leq
\left(\dfrac{\phi_{{\rm max},k,l}}{\phi_{{\rm min},k,l}} \right)^{2j}
\left(\dfrac{\phi_{{\rm max},k',l}}{\phi_{{\rm min},k',l}} \right)^{2j}
\end{equation*}
 where we recall
$\phi_{{\rm min},z,l} = \min_{x\in\L_{z,l}} \phi_{x,N}$ and $ \phi_{{\rm max},z,l} = \max_{x\in\L_{z,l}} \phi_{x,N}$.

\item Poincar\'e inequality:
For fixed $j\geq 1$ and $k,k'$ such that $|k-k'|>2l+1$,  we have
\begin{equation*} 
{\rm Var}_{\kappa^{env}_{k,k',l,j}}(f)
\leq
C_{k,k',l,j} E_{\kappa^{env}_{k,k',l,j}} \big [ f(-S_{k,k',l}f) \big ]
\end{equation*}
where
$
C_{k,k',l,j}
\leq C(g^*, g_*)l^2 r_{k,k',l,N}\left(\dfrac{\phi_{{\rm max},k,l}}{\phi_{{\rm min},k,l}} \right)^{4j}
\left(\dfrac{\phi_{{\rm max},k',l}}{\phi_{{\rm min},k',l}} \right)^{4j}$.

\item For each $l$ fixed, and $C=C(l)>0$, we have
\[
\lim_{N\uparrow\infty}\sup_{k,k',N}\sup_{j\leq C\log(N)}\Big(\dfrac{\phi_{{\rm max},k,l}}{\phi_{{\rm min},k,l}}\Big)^{4j}  \Big(\dfrac{\phi_{{\rm max},k',l}}{\phi_{{\rm min},k',l}}\Big)^{4j} =1,
\ \ 
\limsup_{\epsilon\downarrow 0}\limsup_{N\uparrow\infty}\sup_{2l+1\leq |k'-k|\leq \epsilon N}r_{k,k',l,N} = 1.
\]
Hence, for fixed $l$ and $C=C(l)>0$, we have
$$ \limsup_{\epsilon \downarrow 0}\limsup_{N\uparrow\infty} 
 \sup_{2l+1\leq |k'-k| \leq \epsilon  N}\sup_{j\leq C\log(N)}
\epsilon C_{k,k',l,j}=0.$$
\end{enumerate}
\end{lemma}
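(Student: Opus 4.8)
The plan is to follow the single-block argument of Lemma \ref{spectral gap ln k} essentially line for line, now exploiting the product structure $\kappa^{env}_{k,k',l} = \kappa^{env}_{k,l}\times\kappa_{k',l}$ and $\kappa^{0,env}_{k,k',l} = \kappa^{0,env}_{k,l}\times\kappa^0_{k',l}$, together with the two parts of Lemma 6.1 in \cite{LPSX} invoked there, the relaxation-time bound $b^{env}_{l,l,j}\le C(g^*,g_*)l^2$ recalled above, and Lemma \ref{lem: uniform bounds on phi max min}.

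For Part (1), I would write $\kappa^{env}_{k,k',l,j}(\eta) = \kappa^{env}_{k,k',l}(\eta)/\kappa^{env}_{k,k',l}(\Omega_{k,k',l,j})$ (and likewise when $\alpha^N_\cdot\equiv 0$) and factor the Radon--Nikodym derivative $\kappa^{env}_{k,k',l}(\eta)/\kappa^{0,env}_{k,k',l}(\eta)$ over the two blocks: on $\L_{k,l}$ it equals $(\rho^*/\rho_{k,N})\,\kappa_{k,l}(\eta)/\kappa^0_{k,l}(\eta)$ and on $\L_{k',l}$ it equals $\kappa_{k',l}(\eta)/\kappa^0_{k',l}(\eta)$. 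Applying \eqref{1-block-kappa} to each factor, with the respective block particle counts (each at most $j$), gives the displayed two-sided bounds up to the prefactor $\rho^*/\rho_{k,N}$; estimating the normalization ratio $\kappa^{0,env}_{k,k',l}(\Omega_{k,k',l,j})/\kappa^{env}_{k,k',l}(\Omega_{k,k',l,j})$ by the reciprocal bounds then cancels the prefactor and yields Part (1). For Part (2), I would chain three comparisons exactly as in Lemma \ref{spectral gap ln k}: (i) pass from the variance under $\kappa^{env}_{k,k',l,j}$ to that under $\kappa^{0,env}_{k,k',l,j}$, at cost of $(\phi_{{\rm max},k,l}/\phi_{{\rm min},k,l})^{2j}(\phi_{{\rm max},k',l}/\phi_{{\rm min},k',l})^{2j}$ by Part (1); (ii) bound ${\rm Var}_{\kappa^{0,env}_{k,k',l,j}}(f)$ by $b^{env}_{l,l,j}\,E_{\kappa^{0,env}_{k,k',l,j}}[f(-S^{0,env}_{k,k',l}f)]$ with $b^{env}_{l,l,j}\le C(g^*,g_*)l^2$; (iii) compare that Dirichlet form term by term with the form of $-S_{k,k',l}$, using that every rate that appears (the $\mathfrak{p}^N_{x,+}$ for $x,x+1\in\L_{k,k',l}$ and the connecting-bond rate $\mathfrak{p}^N_{k+l,k'-l}$) is at least $r_{k,k',l,N}^{-1}$, and that switching the reference measure back from $\kappa^{0,env}_{k,k',l,j}$ to $\kappa^{env}_{k,k',l,j}$ costs another $(\phi_{{\rm max},k,l}/\phi_{{\rm min},k,l})^{2j}(\phi_{{\rm max},k',l}/\phi_{{\rm min},k',l})^{2j}$ by Part (1). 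Multiplying the three estimates gives the stated bound on $C_{k,k',l,j}$.

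For Part (3), Lemma \ref{lem: uniform bounds on phi max min} gives $|\phi_{x,N}-\phi_{x+1,N}| = O(\phi_{\max,N}/N)$ and $\phi_{\max,N}/\phi_{\min,N}\le C_1$, so on any window of width $2l+1$ one has $\phi_{{\rm max},z,l}/\phi_{{\rm min},z,l} = 1 + O(l/N)$ uniformly in $z$, whence $(\phi_{{\rm max},k,l}/\phi_{{\rm min},k,l})^{4j}(\phi_{{\rm max},k',l}/\phi_{{\rm min},k',l})^{4j} = (1+O(l/N))^{8j}\to 1$ uniformly over $j\le C\log N$; this is the first limit. For the second, each $\mathfrak{p}^N_{x,+}$ with $x,x+1$ lying in a single block equals $1+O(1/N)$ (by the boundedness of $\alpha^N_\cdot$ and the same lemma), while the connecting-bond rate $\mathfrak{p}^N_{k+l,k'-l}$ contains the fugacity ratio $\phi_{k'-l,N}/\phi_{k+l,N}$; telescoping the bound on consecutive differences over the at most $\epsilon N$ intermediate sites, valid when $|k'-k|\le\epsilon N$, shows $\phi_{k'-l,N}/\phi_{k+l,N} = 1+O(\epsilon)$, so that $r_{k,k',l,N} = 1+O(\epsilon)+O(1/N)$ uniformly over $2l+1\le|k'-k|\le\epsilon N$. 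Inserting both facts into the Part (2) bound, and using that $l$ is held fixed so that $\epsilon\,C(g^*,g_*)l^2(1+O(\epsilon))\to 0$ as $\epsilon\downarrow 0$, gives the final iterated-limit statement.

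The only place the two-block case departs substantively from the one-block case, and thus the main obstacle, is the connecting bond $\{k+l,k'-l\}$: its rate $\mathfrak{p}^N_{k+l,k'-l}$ involves $\phi_{k'-l,N}/\phi_{k+l,N}$, a ratio of fugacities across a macroscopic gap, which is close to $1$ only after the $\epsilon\downarrow 0$ limit. This is precisely why $r_{k,k',l,N}$, rather than a bare $1+O(1/N)$ factor, must be retained in Part (2), and why Part (3) carries the outer $\limsup_{\epsilon\downarrow 0}$; all remaining steps are a routine bond-by-bond, block-by-block transcription of the computations already performed for Lemma \ref{spectral gap ln k}.
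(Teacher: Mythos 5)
Your proposal follows essentially the same approach as the paper: factor the two-block measures $\kappa^{env}_{k,k',l}=\kappa^{env}_{k,l}\times\kappa_{k',l}$ and $\kappa^{0,env}_{k,k',l}=\kappa^{0,env}_{k,l}\times\kappa^0_{k',l}$ block by block, apply \eqref{1-block-kappa} to each factor (with the prefactor $\rho^*/\rho_{k,N}$ cancelling against the normalization constants as in the one-block case), then chain the variance-comparison, translation-invariant Poincar\'e with $b^{env}_{l,l,j}\le C(g^*,g_*)l^2$, and the Dirichlet-form rate comparison. Your Part (3) is somewhat more explicit than the paper's one-sentence remark, in that you spell out the telescoping of consecutive fugacity differences from Lemma~\ref{lem: uniform bounds on phi max min} to show $\phi_{k'-l,N}/\phi_{k+l,N}=1+O(\epsilon)$ uniformly on the range $2l+1\le|k'-k|\le\epsilon N$, and so $\mathfrak{p}^N_{k+l,k'-l}\to 1$ after $N\uparrow\infty,\,\epsilon\downarrow 0$; the paper's proof simply asserts this. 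The argument is correct and matches the paper's.
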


\begin{proof}
The argument follows the proof of Lemma \ref{spectral gap ln k}, by
comparing $\kappa_{k,k',l,j}$ with $\kappa^0_{k,k',l,j}$.   However, here we have two separated intervals.  Since $\kappa_{k,k',l}$ and $\kappa^0_{k,k',l}$ factor into products indexed over these intervals, we may proceed.  Indeed, by comparing fugacities in $\Lambda_{k,l}$ with $\phi_{min, k, l}$ and $\phi_{max,k,l}$ and those in $\Lambda_{k',l}$ with $\phi_{min,k',l}$ and $\phi_{max, k', l}$, the analog of \eqref{1-block-kappa} is
$$\left(\dfrac{\phi_{{\rm min},k,l}}{\phi_{{\rm max},k,l}} \right)^{2j}
\left(\dfrac{\phi_{{\rm min},k',l}}{\phi_{{\rm max},k',l}} \right)^{2j}
\leq
\dfrac{\kappa^{env}_{k,k',l} (\eta) }{\kappa^{0,env}_{k,k',l}(\eta)}
\leq
\left(\dfrac{\phi_{{\rm max},k,l}}{\phi_{{\rm min},k,l}} \right)^{2j}
\left(\dfrac{\phi_{{\rm max},k',l}}{\phi_{{\rm min},k',l}} \right)^{2j}.$$
The rest of the argument for Part 1, and Parts 2,3 are similar as for the 1-block Lemma \ref{spectral gap ln k}.  We only note that here $\mf p^N_{k+l, k'-l}$ converges to $1$ as $N\uparrow\infty$, $\epsilon\downarrow 0$, by Lemma \ref{lem: uniform bounds on phi max min}.
\end{proof}

We state now a centering lemma, similar to Lemma \ref{lem:L1b_mean_zero_func}.  Recall that $H$ is bounded and Lipschitz, as stated in the proof of Lemma \ref{replacement-lemma} after 
\eqref{eq:lem:repl:split}.

\begin{lemma}
 \label{lem:L2b_mean_zero_func}
    We have, for $C=C(l)>0$ depending on $l$, that
    \begin{equation*}
        \limsup_{\ell \rightarrow \infty} 
		\limsup_{N \rightarrow \infty} \sup_{\stackrel{x\in\T_N}{y\in\{2\ell+1,\dots,\epsilon N\}}} \sup_{j \leq C \log N}
        \bigg| E_{\kappa^{env}_{x,x+y,l,j}}[ H(\eta^{\ell}(x)) - H(\eta^{\ell}(x + y)) ] \bigg| = 0.
    \end{equation*}
   \end{lemma}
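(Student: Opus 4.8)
The plan is to follow the scheme of Lemma \ref{lem:L1b_mean_zero_func}: first pass from the inhomogeneous canonical measure to the translation-invariant one, and then invoke (or reprove) the translation-invariant two-block centering estimate. For the first step, since $H$ is bounded (being bounded Lipschitz, cf.\ the proof of Lemma \ref{replacement-lemma}), writing $F=H(\eta^\ell(x))-H(\eta^\ell(x+y))$ and using the Radon--Nikodym bounds of Part (1) of Lemma \ref{spectral gap of 2 blocks},
$$\Big|E_{\kappa^{env}_{x,x+y,\ell,j}}[F]-E_{\kappa^{0,env}_{x,x+y,\ell,j}}[F]\Big|\ \le\ 2\|H\|_\infty\Big(\big(\tfrac{\phi_{{\rm max},x,\ell}}{\phi_{{\rm min},x,\ell}}\big)^{2j}\big(\tfrac{\phi_{{\rm max},x+y,\ell}}{\phi_{{\rm min},x+y,\ell}}\big)^{2j}-1\Big),$$
which by Part (3) of Lemma \ref{spectral gap of 2 blocks} tends to $0$ as $N\uparrow\infty$ uniformly over $x$, $y\in\{2\ell+1,\dots,\epsilon N\}$ and $j\le C\log N$. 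This reduces the claim to showing $\sup_{x,y}\sup_{j\ge0}\big|E_{\kappa^{0,env}_{x,x+y,\ell,j}}[F]\big|\to0$ as $\ell\to\infty$, the two-block analogue of the centering estimate Lemma 6.4 in \cite{JLS} (the quantity no longer depends on $N$, and in the null environment, since $|x-(x+y)|>2\ell$, it does not depend on $x,y$).

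To prove that translation-invariant bound I would proceed as follows. Put $m=2\ell+1$ and let $j_1,\,j_2=j-j_1$ be the numbers of particles in $\Lambda_{x,\ell}$ and $\Lambda_{x+y,\ell}$, so that on $\Omega_{x,x+y,\ell,j}$ one has $\eta^\ell(x)=j_1/m$, $\eta^\ell(x+y)=j_2/m$, and $F$ is odd under the block-swap $\Lambda_{x,\ell}\leftrightarrow\Lambda_{x+y,\ell}$. Since $\kappa^{0,env}_{x,x+y,\ell,j}(\eta)=(2m\,\eta(x)/j)\,\kappa^{0}_{x,x+y,\ell,j}(\eta)$ and $\kappa^{0}_{x,x+y,\ell,j}$ is exchangeable --- hence block-swap invariant with $E_{\kappa^{0}_{x,x+y,\ell,j}}[\eta(x)\mid j_1]=j_1/m$ --- one gets
$$E_{\kappa^{0,env}_{x,x+y,\ell,j}}[F]\ =\ \tfrac{2}{j}\,E_{\kappa^{0}_{x,x+y,\ell,j}}[j_1F]\ =\ \tfrac{1}{j}\,E_{\kappa^{0}_{x,x+y,\ell,j}}\big[(j_1-j_2)F\big].$$
Bounding $|F|\le\|H\|_{Lip}|j_1-j_2|/m$ and using the identity ${\rm Var}_{\kappa^{0}_{x,x+y,\ell,j}}(j_1-j_2)=\tfrac{4m^2}{2m-1}\,v_j$ with $v_j:={\rm Var}_{\kappa^{0}_{x,x+y,\ell,j}}(\eta(x))$ gives $\big|E_{\kappa^{0,env}_{x,x+y,\ell,j}}[F]\big|\le\tfrac{4\|H\|_{Lip}}{j}\,v_j$. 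Finally I would control $v_j$ by two bounds --- the trivial $v_j\le E[\eta(x)^2]\le jE[\eta(x)]=j^2/(2m)$ (valid since $\eta(x)\le j$ and $E_{\kappa^{0}_{x,x+y,\ell,j}}[\eta(x)]=j/(2m)$), and the equivalence-of-ensembles bound $v_j\le C(1+j/(2m))$, which holds uniformly in $m\ge1$, $j\ge0$ under (LG), (M) since then $\sigma^2(\rho)=O(1+\rho)$ --- and balance them: $v_j/j\le\min\{\,j/(2m),\ C/j+C/(2m)\,\}=O(m^{-1/2})$ uniformly in $j\ge1$, so $\big|E_{\kappa^{0,env}_{x,x+y,\ell,j}}[F]\big|=O(\ell^{-1/2})\to0$.

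The hard part will be the uniform variance estimate $v_j\le C(1+j/(2m))$: one needs the linear-in-density growth $\sigma^2(\rho)=O(1+\rho)$ under (LG), (M) together with an equivalence of ensembles whose error is uniform in the box size $2m$ and in $j$, so that $C$ does not depend on $\ell$ or $j$. Given that, the block-swap symmetrization and the balancing of the two elementary bounds are routine; one also has to keep every estimate uniform over $x\in\T_N$ and $y$ in the stated range, which is exactly the role of Part (3) of Lemma \ref{spectral gap of 2 blocks} in the reduction above. As with Lemma \ref{lem:L1b_mean_zero_func}, an alternative is to quote the entire translation-invariant two-block centering step directly from \cite{JLS}.
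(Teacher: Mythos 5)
Your first step is exactly the paper's: use the Radon--Nikodym bounds of Part~(1) of Lemma~\ref{spectral gap of 2 blocks} together with Part~(3) to pass uniformly (over $x$, $y\in\{2\ell+1,\dots,\epsilon N\}$, $j\le C\log N$) from $\kappa^{env}_{x,x+y,\ell,j}$ to the null-environment canonical measure $\kappa^{0,env}_{x,x+y,\ell,j}$, exploiting only that $H$ is bounded. Where the paper then simply cites Lemma~6.6 of \cite{JLS} for the translation-invariant two-block centering $\sup_{j}\big|E_{\kappa^{0,env}_{x,x+y,\ell,j}}[H(\eta^\ell(x))-H(\eta^\ell(x+y))]\big|\to0$, you instead give a self-contained argument: the size-biasing identity $\kappa^{0,env}_{x,x+y,\ell,j}=\tfrac{2m\,\eta(x)}{j}\kappa^0_{x,x+y,\ell,j}$, the block-swap antisymmetrization $E[j_1F]=\tfrac12E[(j_1-j_2)F]$, the exchangeability identity ${\rm Var}(j_1-j_2)=\tfrac{4m^2}{2m-1}v_j$, the Lipschitz bound $|F|\le\|H\|_{Lip}|j_1-j_2|/m$, and a balance of the trivial estimate $v_j\le j^2/(2m)$ against an equivalence-of-ensembles estimate $v_j\le C(1+j/(2m))$. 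The algebra (size-biasing factor $2m\eta(x)/j$, the covariance identity, and the $O(m^{-1/2})$ balance) is correct. The one item that is asserted rather than proved is the uniform (in $m$, $j$) variance bound $v_j\le C(1+j/(2m))$ for the canonical one-site marginal under (LG), (M); you rightly flag this as the hard part. It is indeed a known, though nontrivial, consequence of (LG)--(M) type hypotheses via local CLT / equivalence-of-ensembles arguments, and it is precisely the kind of input hidden in the citation to \cite{JLS}. Since you also offer the citation as the alternative, your route is valid and matches the paper modulo supplying versus citing that translation-invariant estimate; your explicit symmetrization is a clean and somewhat more elementary way to see it.
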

\begin{proof}
   Since $H$ is bounded, we may reduce to the homogeneous case by using the same steps and approach as in Lemma \ref{lem:L1b_mean_zero_func}.  However, we invoke
Lemma 6.6 in \cite{JLS} to deduce the limit in the homogeneous case, when $\alpha^N_\cdot \equiv 0$.
\end{proof}

We now come to the main estimate of the section.
\begin{lemma}[Local 2-block]
    With the same notation as in Lemma \ref{replacement-lemma},
    \begin{equation*}
        \limsup_{\ell \rightarrow \infty} \limsup_{\epsilon \rightarrow 0} \limsup_{N \rightarrow \infty} ~\E^N \left[ \,\left|\, \int_{0}^t  D^N_{X^N_s}\Big(H(\eta_s^\ell(X^N_s)) - \frac{1}{\epsilon N} \sum_{x=1}^{\epsilon N} H(\eta_s^\ell(X^N_s + x)) \Big)\,ds\,\right|\, \right] = 0.
    \end{equation*}
    \label{lem:L2b}
\end{lemma}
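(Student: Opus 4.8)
The plan is to mirror the proof of the local $1$-block estimate (Lemma \ref{lem:L1b}), the only genuinely new ingredient being that the macroscopic average $\frac{1}{\epsilon N}\sum_{x=1}^{\epsilon N}$ lets one exchange particles between two mesoscopic $\ell$-blocks lying a macroscopic distance apart. Writing $W_{\ell,\epsilon}(X^N_s,\eta_s) = D^N_{X^N_s}\big(H(\eta^\ell_s(X^N_s)) - \frac{1}{\epsilon N}\sum_{x=1}^{\epsilon N}H(\eta^\ell_s(X^N_s+x))\big)$ for the integrand (recall $H$ is bounded and Lipschitz), I would first truncate via Lemma \ref{loc trun lemma} to reduce to estimating $\E^N\big[\int_0^t W_{\ell,\epsilon}\,\OneB_{G_{N,\ell}}\,ds\big]$ on the event $G_{N,\ell}$ that all local averages entering $W_{\ell,\epsilon}$ are bounded by $C\log N$; then apply the entropy inequality relative to $\nu^N$, the bound $\H(\mu^N\,|\,\nu^N)\le C_0N$ (Condition \ref{cond: initial measure}(d), Proposition \ref{prop:rel_ent_order_N}), and the Feynman--Kac bound (Lemma 7.2 of Appendix 1 in \cite{KL}), exactly as in Lemma \ref{lem:L1b}. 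This reduces the claim, for each fixed $\gamma>0$ (the residual entropy term contributing $C_0/\gamma$, removed by $\gamma\to\infty$ at the very end), to showing
\[
\limsup_{\ell\to\infty}\limsup_{\epsilon\to 0}\limsup_{N\to\infty}\ \sup_{\|f\|_{L^2(\nu^N)}=1}\Big\{\big\langle W_{\ell,\epsilon}\,\OneB_{G_{N,\ell}}\,f,\,f\big\rangle_{\nu^N}-\tfrac{N}{\gamma}\,\DD(f)\Big\}\ \le\ 0 .
\]

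Next I would split $W_{\ell,\epsilon}=\frac{1}{\epsilon N}\sum_{y=1}^{\epsilon N}W_y$ with $W_y = D^N_{X^N_s}\big(H(\eta^\ell_s(X^N_s))-H(\eta^\ell_s(X^N_s+y))\big)$. The terms with $1\le y\le 2\ell+1$ are harmless: on $G_{N,\ell}$ the overlapping windows give $|\eta^\ell_s(X^N_s)-\eta^\ell_s(X^N_s+y)|\le \frac{2y}{2\ell+1}C\log N$, so $H$ Lipschitz and $D^N$ bounded bound their total contribution by $\frac{1}{\epsilon N}\sum_{y\le 2\ell+1}\frac{Cy\log N}{\ell}\le \frac{C\ell\log N}{\epsilon N}\to 0$ as $N\to\infty$ for fixed $\ell,\epsilon$. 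For $2\ell+2\le y\le \epsilon N$, using $\big\langle \frac{1}{M}\sum_y W_y f,f\big\rangle-\frac{N}{\gamma}\DD(f)=\frac{1}{M}\sum_y\big(\langle W_y f,f\rangle-\frac{N}{\gamma}\DD(f)\big)$ with $M=\epsilon N-2\ell-1$, it suffices to control $\frac{1}{\epsilon N}\sum_{y=2\ell+2}^{\epsilon N}\sup_{\|f\|_{L^2(\nu^N)}=1}\{\langle W_y\OneB_{G_{N,\ell}}f,f\rangle_{\nu^N}-\frac{N}{\gamma}\DD(f)\}$. For a fixed such $y$ I would, as in the $1$-block proof, condition on the tagged position $X=x$, use $\DD(f)\ge\sum_x\nu^N(X=x)\DD^{env,x}(f(x,\cdot))$ and the same normalization to pass to $\sup_{\|f\|_{L^2(\nu^{env,x})}=1}\{\langle W_y\OneB_{G_{N,\ell}}f,f\rangle_{\nu^{env,x}}-\frac{N}{\gamma}\DD^{env,x}(f)\}$, uniformly in $x$. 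Since $W_y$ depends on the configuration only through the sites of $\Lambda_{x,\ell}\cup\Lambda_{x+y,\ell}$, the key move is a standard moving-particle estimate (Chapter 5 of \cite{KL}, and \cite{JLS}): transporting a particle from $x+\ell$ to $x+y-\ell$ along the $O(y)$ corridor bonds yields $\DD^{env,x}(f)\ge \frac{c}{y}\DD^{env}_{x,x+y,\ell}(\widehat f)$, where $\widehat f$ is the conditional expectation of $f$ onto functions of $\Lambda_{x,\ell}\cup\Lambda_{x+y,\ell}$ and $\DD^{env}_{x,x+y,\ell}$ is the two-block Dirichlet form including the connecting bond $\{x+\ell,x+y-\ell\}$. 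Conditioning further on the number $j\le C\ell\log N$ of particles in $\Lambda_{x,\ell}\cup\Lambda_{x+y,\ell}$, centering $H(\eta^\ell(x))-H(\eta^\ell(x+y))$ to mean zero with respect to $\kappa^{env}_{x,x+y,\ell,j}$ via Lemma \ref{lem:L2b_mean_zero_func}, and applying Rayleigh's expansion (Theorem 1.1, Appendix 3 in \cite{KL}) with the Poincar\'e constant $C_{x,x+y,\ell,j}$ of Lemma \ref{spectral gap of 2 blocks}(2), the inner supremum for each $y$ is bounded, up to bounded factors, by $\frac{\gamma}{N}\,y\,C_{x,x+y,\ell,j}$. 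Since $y\le \epsilon N$, the $\epsilon$-average is $\le C\gamma\,\epsilon\,\sup_{2\ell+1\le|k'-k|\le\epsilon N}\sup_{j\le C\log N}C_{k,k',\ell,j}$, which vanishes as $N\to\infty$ and then $\epsilon\to 0$ by Lemma \ref{spectral gap of 2 blocks}(3); finally $\gamma\to\infty$ finishes the argument.

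I expect the main obstacle to be the control of the long-range connecting bond in the inhomogeneous environment. The connecting-bond rate $\mf p^N_{x+\ell,x+y-\ell}$ carries the fugacity ratio $\phi_{x+y-\ell,N}/\phi_{x+\ell,N}$ over a macroscopic distance $y\le\epsilon N$, which by Lemma \ref{lem: uniform bounds on phi max min} is $1+O(\epsilon)$ but not close to $1$ for fixed $\epsilon$; one must therefore propagate this $\epsilon$-dependence carefully through the comparison of $\kappa^{env}_{x,x+y,\ell,j}$ with its homogeneous counterpart $\kappa^{0,env}_{x,x+y,\ell,j}$ and through the moving-particle estimate, verifying (as already packaged in Lemma \ref{spectral gap of 2 blocks}(3)) that the accumulated constants enter only through the product $\epsilon\,C_{x,x+y,\ell,j}$, which collapses in the stated order of limits. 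A secondary but genuinely technical point is the chain of projection/conditioning steps relating $\DD(f)$, $\DD^{env,x}(f)$ and $\DD^{env}_{x,x+y,\ell}(\widehat f)$ while preserving $\|f\|_{L^2}=1$ and handling the non-sign-definite $W_y$; this is routine once organized along the lines of the corresponding estimates in \cite{KL} and \cite{JLS}.
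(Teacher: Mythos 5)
Your proposal is correct and follows essentially the same route as the paper's proof: truncate via Lemma \ref{loc trun lemma}, apply the entropy inequality (using Proposition \ref{prop:rel_ent_order_N}) and the Feynman--Kac bound, pass to $\DD^{env,x}$ by conditioning on the tagged position, insert the long connecting bond so as to compare $\DD^{env,x}$ with the two-block form $\DD^{env}_{x,x+y,\ell}$, center via Lemma \ref{lem:L2b_mean_zero_func}, and conclude by Rayleigh expansion with the Poincar\'e constant of Lemma \ref{spectral gap of 2 blocks}, whose Part 3 kills the residual $\epsilon$-dependence. The only cosmetic differences are that the paper dismisses the $y\le 2\ell$ terms simply by boundedness of $H$ (your Lipschitz/truncation argument works but is unnecessary), and it controls the remaining $y$'s by $\sup_{2\ell+1\le y\le\epsilon N}$ rather than your $y$-average, invoking the packaged $y$-uniform comparison $\DD^{env}_{x,x+y,\ell}(f)\le C_1(1+\epsilon N)\,\DD^{env,x}(f)$ from Step 5 of Lemma 7.2 in \cite{LPSX} in place of your $y$-dependent moving-particle estimate $\DD^{env,x}(f)\ge (c/y)\,\DD^{env}_{x,x+y,\ell}(\hat f)$ followed by averaging; both yield the same final order $\epsilon\, C_{x,x+y,\ell,j}$, vanishing by Lemma \ref{spectral gap of 2 blocks}(3).
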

\begin{proof}
    Let $V_{\ell, x,y}(\eta) = D^N_x\big(H(\eta^{\ell}(x)) - H(\eta^{\ell}(x + y))\big)$. Since $H$ and $D^N_\cdot$ are uniformly bounded, so is $V_{\ell,x,y}$ for all $y \in \T_N$. Therefore,
    \begin{align*}
            H(\eta^{\ell}(x)) - \frac{1}{\epsilon N} \sum_{y=1}^{\epsilon N} H(\eta^{\ell}(x + y))
            =
            \frac{1}{\epsilon N} \sum_{y=1}^{\epsilon N} V_{\ell,x,y}(\eta)
            \leq
            O\left( \frac{\ell}{\epsilon N} \right) + \frac{1}{\epsilon N} \sum_{y=2\ell+1}^{\epsilon N} V_{\ell,x,y}(\eta).
    \end{align*}

    The first term on the right hand side vanishes as $N \rightarrow \infty$. To bound the second term, 
    it is enough to show
    \begin{align*}
        \limsup_{\ell \rightarrow \infty} \limsup_{\epsilon \rightarrow 0} \limsup_{N \rightarrow \infty} \sup_{y \in \{2\ell + 1,\dots, \epsilon N\}} \E^N \left[ \,\left| \int_0^t  V_{\ell,X^N_s,y}(\eta_s) \,ds \right|\, \right] = 0.
    \end{align*}
    We may proceed as in the proof of the local 1-block Lemma \ref{lem:L1b} so that it suffices to show 
    \begin{align*}
&        \limsup_{\ell \rightarrow \infty} \limsup_{\epsilon \rightarrow 0} \limsup_{N \rightarrow \infty} \\
&\quad\quad \sup_{x\in \T_N}\sup_{y \in \{2\ell + 1,\dots, \epsilon N\} } \sup_{\|f\|_{L^2(\nu^{env,x})} = 1} \langle V_{\ell,x,y}\OneB_{G_{N,\ell}},\,f^2 \rangle_{\nu^{env,x}} - \frac{N}{\gamma} \DD^{env,x}(f) = 0
    \end{align*}
 where $G_{N,\ell} = \{(x, \eta) : \eta^{\ell}(x) +\eta^\ell(x+y)\leq C \log N\}$ and $\gamma > 0$ is a fixed constant.

 We now assert that
 \begin{align*}
        \DD^{env}_{x,x+y,l}(f) \leq C_1(1 + \epsilon N)\,\DD^{env,x}(f).
    \end{align*}
    Indeed, note that $\DD^{env}_{x,x+y,l}$ consists of the sum of Dirichlet forms $\DD^{env}_{x,l}(f)$, $\DD_{x+y,\ell}(f)$ and the Dirichlet bond from $x+\ell$ to $x+y-\ell$.  Now, Step 5 of Lemma 7.2 in \cite{LPSX} directly bounds the Dirichlet bond by $\epsilon N \DD'(f)$ where $\DD'(f)$ consists of Dirichlet bonds in $\DD^{env,x}(f)$ not involving $x$.  Therefore, $\DD'(f)\leq \DD^{env,x}(f)$ and the claim follows.

    Therefore, after localization and conditioning on the number of particles $j$, it suffices to show
    \begin{align*}
        &    \limsup_{\ell \rightarrow \infty} \limsup_{\epsilon \rightarrow 0} \limsup_{N \rightarrow \infty}
            \sup_{x\in\T_N,\,y \in \{2\ell + 1,\dots, \epsilon N\} }\sup_{j\leq C(\ell)\log(N)} \sup_{\|f\|_{L^2(\kappa^{env}_{x,x+y,\ell,j})} = 1}
            \\
        &   \quad \quad \langle V_{\ell,x,y},\,f^2 \rangle_{\nu_{\kappa^{env}_{x,x+y,\ell,j}}} - \frac{1}{2 \epsilon \gamma C_1} \DD^{env}_{x,x+y,\ell,j}(f) = 0.
    \end{align*}

We may replace $V_{\ell,x,y}$ by the centered expression $\widetilde V_{\ell, x,y} = V_{\ell,x,y}-E_{\kappa^{env}_{x,x+y,\ell,j}}[V_{\ell,x,y}]$ in the above limit, since by Lemma \ref{lem:L2b_mean_zero_func},
$E_{\kappa^{env}_{x,x+y,\ell,j}}[V_{l,x,y}]$ vanishes uniformly in the limits and restrictions given.
Then, to handle the centered $\widetilde V_{\ell,x,y}$ by Rayleigh expansion and Poincar\'e inequality, via Lemma \ref{spectral gap of 2 blocks}, we obtain
            \begin{align*}
       &    \sup_{x\in \T_N} \sup_{2\ell+1\leq y\leq \epsilon N} \sup_{j\leq C(\ell)\log(N)}\sup_{\|f\|_{L^2(\kappa^{env}_{x,x+y,\ell,j})}=1} 
            \langle \widetilde{V}_{\ell,x,y},\,f^2 \rangle_{\kappa^{env}_{x,x+y,\ell,j}} - \frac{1}{2 \epsilon \gamma C_1} \DD^{env}_{x,x+y,\ell,j}(f)\\
     &\leq        \sup_{x\in\T_N,\,y \in \{2\ell + 1,\dots, \epsilon N\} }
            \sup_{j \leq C(\ell)\log N}
            \,
            \frac{2 \epsilon \gamma C_1C_{x,x+y, \ell,j}\|\widetilde{V}_{\ell,x,y}\|^2_{L^2(\kappa^{env}_{x,x+y,\ell,j})}}{1 - 2\epsilon\gamma  C_1 C_{x,x+y,\ell,j}\|\widetilde{V}_{\ell,x,y}\|^2_{\infty} }.
    \end{align*}
		The right-hand side vanishes as $N\rightarrow\infty$ by Part 3 of Lemma \ref{spectral gap of 2 blocks}, noting that $H$ and $D^N_\cdot$ are uniformly bounded.
\end{proof}

%%%%%%%%%%%%%%%%%%%%%%%%%%%%%%%%%%%%%%%%%%%
\subsection{Global Replacement}
\label{sec:glob_repl}

We now indicate the `global' replacement to handle the third term in \eqref{eq:lem:repl:split} in the proof of Lemma \ref{replacement-lemma}.  We will be able to replace the average of $H(\eta_t^\ell(X_t^N +x))$ for $x$ in a small macroscopic $\epsilon N$ neighborhood of $X_t^N$ by the average of $\overline{H_\ell}(\eta_t^{\theta N}(X_t^N + x))$ over $x$ in the the same $\epsilon N$ neighborhood.
After some manipulation, this `global' replacement can be recovered from the proof of the hydrodynamics Lemma 5.1 in \cite{LPSX}, that is the proof of Theorem \ref{main thm}.

\begin{lemma}[Global Replacement]
    With the same notation as in Lemma \ref{replacement-lemma},
    \begin{align*}
        &\limsup_{\ell \rightarrow \infty} \limsup_{\epsilon \rightarrow 0} \limsup_{\theta\rightarrow 0} \limsup_{N \rightarrow \infty}\nonumber\\
	&\quad\quad\quad			\E^N\left[ \left| \int_0^t D^N_{X^N_s}\Big(\frac{1}{\epsilon N} \sum_{x = 1}^{\epsilon N} H(\eta_s^{\ell}(X^N_s + x)) -   \overline{H_\ell}(\eta_s^{\theta N}(X^N_s + x)) \Big)\,ds \right| \right] = 0.
    \end{align*}
    \label{lem:G}
\end{lemma}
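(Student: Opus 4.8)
The plan is to reduce this statement to the analogous "global replacement" estimate already carried out in the proof of the hydrodynamic limit Theorem \ref{main thm} in \cite{LPSX}, by first removing the tagged-particle shift and then recognizing the resulting expression as the one-block/two-block-to-density replacement for the bulk process. The key observation is that both averages are over the same macroscopic $\epsilon N$-window centered at $X^N_s$, and the difference being estimated compares a double local average $H(\eta^\ell_s(X^N_s+x))$ (a microscopic $\ell$-average of $\eta$ composed with the bounded Lipschitz $H$) against $\overline{H_\ell}(\eta^{\theta N}_s(X^N_s+x))$ (the value of $\overline{H_\ell}$ at the mesoscopic $\theta N$-average). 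Recall $\overline{H_\ell}(\rho) = E_{\Pam_\rho}[H(\eta^\ell(0))]$, so morally this is exactly the statement that on mesoscopic blocks the empirical distribution looks like $\Pam_{\rho}$ for the local density $\rho$, which is precisely the content of the bulk replacement lemma. Since $H, \overline{H_\ell}, D^N_\cdot$ are all uniformly bounded, and by Lemma \ref{loc trun lemma} we may insert the truncation indicator $\OneB_{G_{N,\ell}}$ at negligible cost, all integrands are uniformly bounded.

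First I would apply the entropy inequality exactly as in the proof of Lemma \ref{lem:L1b}: using $\H(\mu^N\mid\nu^N)\leq C_0 N$ (Condition \ref{cond: initial measure}(d) / Proposition \ref{prop:rel_ent_order_N}) and Lemma 7.2 of Appendix 1 in \cite{KL}, it suffices to bound the principal Dirichlet-form eigenvalue expression
\[
\sup_{\|f\|_{L^2(\nu^N)}=1}\ \Big\langle \tfrac{1}{\epsilon N}\sum_{x=1}^{\epsilon N} D^N_{X}\big(H(\eta^\ell(X+x)) - \overline{H_\ell}(\eta^{\theta N}(X+x))\big)\,\OneB_{G_{N,\ell}}\, f,\, f\Big\rangle_{\nu^N} - \frac{N}{\gamma}\DD(f).
\]
Next, I would condition on the value of $X=x_0$ and use $\DD(f)\geq \sum_{x_0}\nu^N(X=x_0)\DD^{env,x_0}(f(x_0,\cdot))$, dropping the tagged-particle jump terms as in Lemma \ref{lem:L1b}; after normalizing each $f(x_0,\cdot)$, the estimate reduces to a supremum over $x_0$ of the corresponding expression under $\nu^{env,x_0}$. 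Now the crucial simplification: for the global replacement the test function in question depends only on $\eta$-coordinates away from $x_0$ (or differs from such a function by $O(\ell/N)$ or $O(\theta N/N)=O(\theta)$-many affected sites, which is negligible after taking $N\to\infty$ and then $\theta\to 0$), so $\DD^{env,x_0}(f)$ can be replaced by the bulk Dirichlet form $\langle f, -Lf\rangle_{\Pam_N}$ up to the comparison noted after \eqref{two star reversible}. At this point the expression is — after relabeling the shifted window $X^N_s+x$ as an unshifted sum over $\T_N$, which is exactly the identity $\langle G,\tau_{X^N_t}\pi^N_t\rangle = \langle \tau_{-\frac1N X^N_t}G,\pi^N_t\rangle$ used in Theorem \ref{tightness thm} — precisely the variational quantity controlled in Lemma 5.1 of \cite{LPSX} (the global/two-block replacement for the bulk hydrodynamics), and I would cite that estimate to conclude it vanishes in the iterated limit $\ell\to\infty$, $\epsilon\to 0$, $\theta\to 0$, $N\to\infty$.

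The main obstacle I anticipate is handling the tagged-particle shift cleanly, i.e. justifying that the random recentering by $X^N_s$ does not interfere with the bulk replacement machinery. Unlike in Lemmas \ref{lem:L1b} and \ref{lem:L2b}, where the localization was to a window of fixed microscopic width $\ell$ around the tagged particle and one could work directly with the environment-process measures $\kappa^{env}_{x,\ell,j}$, here the window has macroscopic width $\epsilon N$ and mesoscopic sub-blocks of width $\theta N$, so one cannot avoid the global estimate and must interface with the full bulk Dirichlet form and the specific structure of $\nu^N$ in terms of the inhomogeneous fugacities $\{\phi_{k,N}\}$. The delicate point is that the extra factors $\mf p^N_{k,\pm} = 1 + O(1/N)$ and the ratios $\phi_{k+1,N}/\phi_{k,N} = 1+O(1/N)$ (Lemma \ref{lem: uniform bounds on phi max min}) perturb the bulk Dirichlet form away from the translation-invariant one, but since the perturbation is uniformly $O(1/N)$ over the $O(\epsilon N)$ relevant bonds, and since the spectral-gap bound \eqref{spec_gap_condition} is uniform, one checks the perturbation contributes a vanishing error to the eigenvalue bound; this is the same mechanism already used in \cite{LPSX} to pass from the inhomogeneous to the translation-invariant system, so invoking the \cite{LPSX} global replacement estimate is legitimate. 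The bookkeeping — tracking which coordinates each function depends on, and verifying that the boundary corrections of size $O(\ell + \theta N)$ are negligible in the correct order of limits — is routine but needs care.
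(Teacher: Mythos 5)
Your proposal matches the paper's proof: both apply the entropy/Feynman--Kac reduction, condition on $X=x_0$ and drop the tagged-jump terms to pass from $\DD^{env,x_0}$ and $\nu^{env,x_0}$ to the bulk Dirichlet form $\langle f,-Lf\rangle_{\Pam_N}$ once the integrand is seen not to depend on $\eta(x_0)$, and then invoke the global one- and two-block estimates behind Lemma 5.1 of \cite{LPSX}. The paper is slightly more surgical on the middle step: rather than arguing that an $O(\theta)$-fraction of affected sites gives a negligible perturbation of the Dirichlet form, it discards all $z$ with $|z-X^N_s|<2\theta N$ from the window up front (at $O(\theta)$ cost, which vanishes in the $\theta\to0$ limit), so that the remaining summand genuinely does not depend on $\eta(X^N_s)$ and the identification $\DD^{env,x_0}(f)=\langle f,-Lf\rangle_{\Pam_N}$ noted after \eqref{two star reversible} holds exactly.
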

\begin{proof}
    We can rewrite the expectation as
    \begin{equation*}
        \E^N \bigg[ ~\Big| \int_0^t D^N_{X^N_s}\Big(\frac{1}{N} \sum_{x \in \T_N} \iota_{\epsilon}(x/N) \left\{ H(\eta_s^{\ell}(X^N_s + x)) -   \overline{H_{\ell}}(\eta_s^{\theta N}(X^N_s + x)) \right\} \Big)\,ds \Big|~ \bigg]
    \end{equation*} 
where $\iota_{\epsilon}(x) = \epsilon^{-1} \OneB_{(0, \epsilon]}$.  We may change variables $X^N_s +x$ to $z$, and then limit the sum over $z$ to $|X^N_s-z|\geq 2\theta N$, as $H$ and $D^N_\cdot$ are uniformly bounded, with an error of $C(\|H\|_\infty, \sup_N\|D^N_\cdot\|_\infty)\theta$.  We need only consider
  $\E^N\left[\frac{1}{N}\sum_{|X^N_s -z|\geq 2\theta N}\left|\int_0^t D^N_{X^N_s-z}\left( H(\eta_s^{\ell}(z)) -   \overline{H_{\ell}}(\eta_s^{\theta N}(z))\right)ds \right| \right]$.

			Let
        $\VV_{\ell,\upsilon}(z, x, \eta) =  D^N_{x-z}\big( H(\eta^\ell(z)) - \overline{H_{\ell}}(\eta^\upsilon(z))\big)$.
		The desired limit will follow if we show, for fixed $\ell>0$, that
    \begin{equation}
		\label{prop:G}
        \limsup_{\theta \rightarrow 0} \limsup_{N \rightarrow \infty} \E^N \bigg[ \frac{1}{N} \sum_{|X^N_s-z|\geq 2\theta N} \Big|\int_0^t \VV_{\ell,\theta N}(z,X^N_s, \eta_s) \,ds \Big|\bigg] = 0.
    \end{equation}

The argument for \eqref{prop:G} follows that of Lemma 5.1 in \cite{LPSX}, which separates into (global) `1-block' and `2-block' estimates, namely Lemmas 6.2 and 7.2 in \cite{LPSX}.  Here, $H$ is a bounded, Lipschitz function of $\ell$ sites, where $\ell$ is fixed, whereas in \cite{LPSX}, the function dealt with there was of only one site.  Still, the same scheme holds as in the translation-invariant case in \cite{KL}; see also \cite{JLS}, \cite{JLS1} for analogous treatments.  We only mention the main steps for the (global) `1-block' as the (global) `2-block' is similar.

For the `1-block' estimate, we need to show
    \begin{equation*}
        \limsup_{\lambda \rightarrow \infty}
        \limsup_{N \rightarrow \infty}
   ~\E^N \bigg[  \frac{1}{N} \sum_{|X^N_s-z| > 2 \theta N} \Big |\int_0^t  \VV_{\ell,\lambda}(z,X^N_s, \eta_s) \,ds \Big| \bigg] = 0,
        \end{equation*}
introducing an intermediate scale $\lambda$.
As in the proof of Lemma 6.3 in \cite{LPSX}, one may introduce a truncation via the entropy inequality, as $g$ satisfies the (FEM) condition (cf. Section \ref{subsec: invariant measure}), so that one need only show 
    \begin{equation*}
        \limsup_{\lambda \rightarrow \infty}
        \limsup_{N \rightarrow \infty}
        ~\E^N \bigg[  \frac{1}{N} \sum_{|X^N_s-z| > 2 \theta N}\Big|\int_0^t  \left( \VV_{\ell,\lambda}(z,X^N_s, \eta_s) \,\OneB_{\{\eta_s^\lambda(z)\leq A\}} \right) \,ds \Big|\bigg] = 0.
    \end{equation*}

By using the entropy inequality and eigenvalue decompositions, conditioning on $x\in \T_N$, as in the `local' 1 and 2-block arguments for Lemmas \ref{lem:L1b} and \ref{lem:L2b},
it suffices to prove, for stationary inhomogeneous measures $\nu^{env,x}$ and constant $\gamma>0$, that
    \begin{align*}
       & \limsup_{\lambda \rightarrow \infty}
        \limsup_{N \rightarrow \infty}
        \sup_{x\in\T_N}
        \sup_{\|f\|_{L^2(\nu^{env,x})}=1}  \\
        &\quad\quad\quad \Big\langle \frac{1}{N}\sum_{|x-z|\geq 2\theta N} \VV_{\ell,\lambda}(z, x, \eta)\, \OneB_{\{\eta^\lambda(z)\leq A\}} )\,f, f \Big\rangle_{\nu^{env,x}} - \frac{N}{\gamma} \DD^{env,x}(f) = 0.
    \end{align*}
    
 Since $\VV_{\ell,\lambda}(x,z,\eta)\OneB_{\{\eta^\lambda(z)\leq A\}}$ does not depend on $\eta(x)$, we may condition $f$ on variables which do not depend on $\eta(x)$, the location $x$ being where the tagged particle is fixed.
 In this way, one can replace the measure $\nu^{env,x}$ with $\Pam_N$, and $\DD^{env,x}(f)$ evaluates to the Dirichlet form with respect to the standard process $\langle f,-Lf\rangle_{\Pam_N}$ (cf. remark after \eqref{two star reversible}). It would suffice to show, now uniformly over $z$, that
   \begin{align*}
       & \limsup_{\lambda \rightarrow \infty}
        \limsup_{N \rightarrow \infty}
             \sup_{x\in\T_N}\sup_{z: |x-z|\geq 2\theta N}
        \sup_{\|f\|_{L^2(\Pam_N)}=1}   \\
				&\quad\quad\quad \Big\langle  \VV_{\ell,\lambda}(z,x,\eta)\, \OneB_{\{\eta^\lambda(z)\leq A\}} \,f, f \Big\rangle_{\Pam_N} - \frac{N}{\gamma} \langle f, -Lf \rangle_{\Pam_N} = 0.
    \end{align*}
		By the proof of Lemma 6.3 (page 220) in \cite{LPSX}, one may replace $\VV_{\ell,\lambda}(z,x,\eta)$ by its `centering' $D^N_{x-z}\Big( H(\eta^\ell(z)) - E_{\kappa_{z,\lambda, (2\lambda+1)\eta^\lambda(z)}}[H(\eta^\lambda(z))]\Big)$ when $\eta^\lambda(z)\leq A$, with a uniform over $x, z$ error vanishing as $N\rightarrow\infty$.  
		
		At this point, one follows the same argument for Lemma 6.3 (page 221-222) in \cite{LPSX} to complete the argument for the (global) `1-block'.
\end{proof}

% --------------------------------------------------
% Appendices
% --------------------------------------------------
%\appendix

\end{document}